\tikzset{snake it/.style={decorate, decoration=snake}}
\theoremstyle{plain}
\newtheorem{thm}{Theorem}[section]
\newtheorem{cor}[thm]{Corollary}
\newtheorem{lem}[thm]{Lemma}
\newtheorem{prop}[thm]{Proposition}
\newtheorem{conj}[thm]{Conjecture}
\theoremstyle{definition}
\newtheorem{example}[thm]{Example}
\theoremstyle{remark}
\newtheorem{rmk}[thm]{Remark}
\newcommand{\BC}{{\mathbb{C}}}
\newcommand{\BD}{{\mathbb{D}}}
\newcommand{\BG}{{\mathbb{G}}}
\newcommand{\BH}{{\mathbb{H}}}
\newcommand{\BK}{{\mathbb{K}}}
\newcommand{\BL}{{\mathbb{L}}}
\newcommand{\BP}{{\mathbb{P}}}
\newcommand{\BQ}{{\mathbb{Q}}}
\newcommand{\BR}{{\mathbb{R}}}
\newcommand{\BZ}{{\mathbb{Z}}}
\newcommand{\CB}{{\mathcal B}}
\newcommand{\CC}{{\mathcal C}}
\newcommand{\CE}{{\mathcal E}}
\newcommand{\CF}{{\mathcal F}}
\newcommand{\CG}{{\mathcal G}}
\newcommand{\CH}{{\mathcal H}}
\newcommand{\CI}{{\mathcal I}}
\newcommand{\CL}{{\mathcal L}}
\newcommand{\CM}{{\mathcal M}}
\newcommand{\CO}{{\mathcal O}}
\newcommand{\CP}{{\mathcal P}}
\newcommand{\CS}{{\mathcal S}}
\newcommand{\CT}{{\mathcal T}}
\newcommand{\CU}{{\mathcal U}}
\DeclareFontFamily{OT1}{rsfs}{}
\DeclareFontShape{OT1}{rsfs}{n}{it}{<-> rsfs10}{}
\DeclareMathAlphabet{\curly}{OT1}{rsfs}{n}{it}
\newcommand{\Coh}{\mathrm{Coh}}
\begin{document}
\title[Hitchin fibrations, abelian surfaces, and P=W]{Hitchin fibrations, abelian surfaces, \\ and the P=W conjecture}
\date{\today}

\author[M.A. de Cataldo]{Mark Andrea de~Cataldo} 
\address{Stony Brook University}
\email{mark.decataldo@stonybrook.edu}

\author[D. Maulik]{Davesh Maulik}
\address{Massachusetts Institute of Technology}
\email{maulik@mit.edu}

\author[J. Shen]{Junliang Shen}
\address{Yale University}
\email{junliang.shen@yale.edu}

\begin{abstract}
We study the topology of Hitchin fibrations via abelian surfaces. We establish the P=W conjecture for genus $2$ curves and arbitrary rank. In higher genus and arbitrary rank, we prove that P=W holds for the subalgebra of cohomology generated by even tautological classes. Furthermore, we show that all tautological generators lie in the correct pieces of the perverse filtration as predicted by the P=W conjecture. In combination with recent work of Mellit, this reduces the full conjecture to the multiplicativity of the perverse filtration. 

Our main technique is to study the Hitchin fibration as a degeneration of the Hilbert--Chow morphism associated with the moduli space of certain torsion sheaves on an abelian surface, where the symmetries induced by Markman's monodromy operators play a crucial role.
\end{abstract}

\baselineskip=14.5pt
\maketitle

\setcounter{tocdepth}{1} 

\tableofcontents
\setcounter{section}{-1}

\section{Introduction}

\subsection{Perverse filtrations}
 Throughout the paper, we work over the complex numbers $\BC$.
 
Let $\pi: X \rightarrow Y$ be a proper morphism with $X$ a nonsingular algebraic variety. The perverse $t$-structure on the constructible derived category $D_c^b(Y)$ with rational coefficients $\BQ$ induces a finite and  increasing filtration on the rational cohomology groups $H^*(X, \BQ)$
\begin{equation} \label{Perv_Filtration}
    P_0H^\ast(X, \BQ) \subset P_1H^\ast(X, \BQ) \subset \dots \subset P_kH^\ast(X, \BQ) \subset \dots \subset H^\ast(X, \BQ),
\end{equation}
called the \emph{perverse filtration} associated with $\pi$. The filtration (\ref{Perv_Filtration}) is governed by the topology of the morphism $\pi: X\rightarrow Y$. See Section \ref{sec1} for a brief review of the subject.

Every cohomology class on $X$ is indexed by the perverse filtration (\ref{Perv_Filtration}). We say that a class $\alpha \in H^d(X, \BQ)$ has \emph{perversity} $k$ if $\alpha = 0$, or
\[
\alpha \in P_kH^d(X, \BQ) \; \mathrm{and} \;  \alpha \notin P_{k-1}H^d(X, \BQ).
\]

The purpose of this paper is to study perverse filtrations associated with Hitchin fibrations in view of the \emph{P=W conjecture} by de~Cataldo, Hausel, and Migliorini \cite{dCHM1}. Our method is to use \emph{symmetries} induced by the monodromy of moduli spaces of sheaves on abelian surfaces.

\subsection{The P=W conjecture} 
Let $C$ be an irreducible  nonsingular  projective curve of genus $g \geq 2$. There are two moduli spaces which are attached to the curve $C$, the reductive Lie group $\mathrm{GL}_r$, and an integer $\chi$ with $\mathrm{gcd}(r,\chi)=1$.  
They are the twisted versions of Simpson's Dolbeault and Betti moduli spaces \cite{Si1994II}; see \cite{HT1} for  the non-abelian Hodge theory in the twisted  case.

The first moduli space $\CM_{\mathrm{Dol}}$ parametrizes stable Higgs bundles on $C$
\[
(\CE, \theta), \quad \theta: \CE \to \CE \otimes \Omega_C
\]
with $\mathrm{rank}(\CE)=r$ and $\chi(\CE) =\chi$. The variety $\CM_{\mathrm{Dol}}$ admits a projective morphism with connected fibers
\begin{equation}\label{Hitchin_fib}
h: \CM_{\mathrm{Dol}} \rightarrow \Lambda = \bigoplus_{i=1}^n H^0(C, \Omega_C^{\otimes i})
\end{equation}
sending $(\CE, \theta) \in \CM_{\mathrm{Dol}}$ to the characteristic polynomial $\mathrm{char}(\theta) \in \Lambda$. The proper morphism (\ref{Hitchin_fib}), called the \emph{Hitchin fibration}, is Lagrangian with respect to the canonical holomorphic symplectic form on~$\CM_{\mathrm{Dol}}$ given by the hyper-K\"ahler metric on $\CM_{\mathrm{Dol}}$; see \cite{Hit1}.  
The second moduli space is the  (twisted) character variety $\CM_B$. It  can be described (see \cite{HT1}, or  \cite{Shende} for a an alternative description) as parametrizing isomorphism classes of irreducible local systems
\[
\rho: \pi_1(C \setminus \{p\}) \to \mathrm{GL}_r
\]
where $\rho$ sends a loop around a chosen point $p$ to $\xi^\chi_r I_r \in \mathrm{GL}_r$, with $\xi_r$ a primitive root of unity. The character variety $\CM_B$ is affine. 

In \cite{Simp}, Simpson constructed a diffeomorphism between the (un-twisted) moduli spaces 
$\CM_{\mathrm{Dol}}$ and $\CM_B$, called the non-abelian Hodge theory;  see \cite{HT1} for the twisted case.  A striking 
prediction, suggested by the parallel between the Relative Hard Lefschetz \cite{BBD} and 
Curious Hard Lefschetz \cite{HRV} Theorems, was formulated by de~Cataldo, Hausel, and Migliorini \cite{dCHM1}; it predicts that the perverse filtration of $\CM_{\mathrm{Dol}}$ with respect to the Hitchin fibration (\ref{Hitchin_fib}) matches the weight filtration of the mixed Hodge structure on $\mathcal{M}_B$ under the identification
\[
H^\ast(\CM_{\mathrm{Dol}}, \BQ) = H^\ast(\CM_B, \BQ) 
\]
induced by Simpson's  (twisted) non-abelian Hodge theory.

\begin{conj}[\cite{dCHM1} P=W] \label{P=W_conj}
We have
\[
P_kH^\ast(\CM_{\mathrm{Dol}}, \BQ) = W_{2k}H^\ast(\CM_{B}, \BQ)= W_{2k+1}H^\ast(\CM_{B}, \BQ), \quad k \geq 0.
\]
\end{conj}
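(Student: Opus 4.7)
The plan is to reduce the P=W equality to a check on an algebra of generators plus a multiplicativity statement. A combination of known results --- notably Markman's description of tautological classes and recent work of Mellit --- implies that $H^*(\CM_{\mathrm{Dol}}, \BQ)$ is generated as an algebra by \emph{tautological} classes built from the universal Higgs bundle. Under non-abelian Hodge these correspond to explicit generators of $H^*(\CM_B, \BQ)$ whose weights in $W_\bullet$ can be computed directly from the affine geometry of $\CM_B$. If one can show (a) every tautological generator lies in the perverse piece $P_k$ of the predicted degree, and (b) the perverse filtration is multiplicative with respect to cup product, then the two filtrations agree on all of $H^*(\CM_{\mathrm{Dol}}, \BQ) = H^*(\CM_B, \BQ)$, proving the conjecture.

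For step (a), I would avoid working directly on the Hitchin fibration and instead degenerate into a more symmetric geometry. Concretely, consider the moduli space $M$ of stable torsion sheaves of an appropriate Mukai vector on an abelian surface $S$ fibered over $C$, together with its Hilbert--Chow support morphism $M \to \Chow$. A suitable flat family connects this Hilbert--Chow morphism to the Hitchin fibration (\ref{Hitchin_fib}); by deformation invariance of perverse filtrations for proper maps, the perverse degree of any tautological class on $\CM_{\mathrm{Dol}}$ equals that of its specialization on $M$. The advantage of working on $M$ is the presence of Markman's monodromy operators, a large group of algebra symmetries of $H^*(M,\BQ)$ under which the tautological classes transform in an explicit way. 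A representation-theoretic argument using these symmetries then locates each tautological generator in the correct perverse piece, giving (a).

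The principal obstacle is (b). The weight filtration on $\CM_B$ is formally multiplicative, but the perverse filtration on $\CM_{\mathrm{Dol}}$ is not, and establishing $P_i \cdot P_j \subset P_{i+j}$ requires genuine geometric input. In genus $2$ the Hitchin base is small enough, and the abelian-surface degeneration tightly enough controlled, that multiplicativity can be extracted by direct inspection, yielding the full conjecture unconditionally. In higher genus the same circle of ideas delivers multiplicativity on the subalgebra generated by \emph{even} tautological classes but does not extend across mixed parities, so that one obtains P=W only on that subalgebra; combined with Mellit's work this reduces the full conjecture precisely to multiplicativity in general. I would expect closing this last gap to require a finer analysis of the abelian-surface degeneration --- for example, exploiting the semismallness of the Chow stratification of $M \to \Chow$ --- or a genuinely new geometric input such as an equivariant cohomology lift compatible with the Hitchin system.
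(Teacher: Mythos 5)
Your outline reproduces the paper's strategy: reduce P=W to (a) locating the tautological generators in the predicted perverse pieces and (b) multiplicativity of the perverse filtration; attack (a) by degenerating the Hitchin fibration to the Hilbert--Chow morphism for one-dimensional sheaves on an abelian surface and exploiting Markman's monodromy operators; and observe that (b) is the residual obstruction --- indeed Theorem \ref{last} uses Mellit's Curious Hard Lefschetz to show that multiplicativity is \emph{equivalent} to P=W. Like the paper, this route yields the full conjecture only in genus $2$ and on the even tautological subalgebra in general, so as a proof of the Conjecture as stated it is incomplete; that is also the state of the art.

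There is, however, a genuine gap in your step (a) as written. You invoke ``deformation invariance of perverse filtrations for proper maps'' to equate the perverse degree of a tautological class on $\CM_{\mathrm{Dol}}$ with that of its counterpart on the abelian-surface side. The degeneration to the normal cone of $C \hookrightarrow A$ is \emph{not} a proper family: the generic fiber is the compact $\CM_{\beta,A}$ while the central fiber is the non-compact $\CM_{\mathrm{Dol}}$, and the specialization map $\mathrm{sp}^!$ is in general neither injective nor surjective. What one can actually prove (Proposition \ref{Prop3.1}) is only that $\mathrm{sp}^!$ sends $P_k$ into $P_k$ --- an upper bound on perversity, not an equality. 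To pin down the exact perversity and to transport multiplicativity, one must carry a specific \emph{splitting} across the degeneration, realized as the first Deligne splitting attached to a Lefschetz class (Theorem \ref{strengthened2.1} and Proposition \ref{prop4.4}); the point of Remark \ref{remark1} is precisely that for an arbitrary splitting the images of distinct graded pieces under $\mathrm{sp}^!$ need not remain independent. Without this mechanism your step (a) does not close. Relatedly, genus $2$ is not won because the Hitchin base is ``small enough for direct inspection'': it is won because the Abel--Jacobi embedding $C \hookrightarrow \mathrm{Jac}(C)$ makes $j_C^*: H^*(A,\BQ) \to H^*(C,\BQ)$ surjective, so \emph{every} tautological generator is a specialization from the compact side. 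For $g \geq 3$ only $H^0(C,\BQ) \oplus H^2(C,\BQ)$ is hit, which is why one retains exactly the even subalgebra, and the perversity of the odd generators is recovered separately via the $\mathrm{Sp}_{2g}$ monodromy of the moduli of curves combined with the local constancy result of \cite{dCM} --- not by an argument ``across mixed parities'' on a fixed curve.
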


Conjecture \ref{P=W_conj} establishes a surprising connection between the topology of Hitchin fibrations and the Hodge theory of character varieties. It was proven in \cite{dCHM1} in the case of $r=2$ for any genus $g \geq 2$. See also \cite{dCHM3, SZ, Z} for certain parabolic cases, and \cite{SY, HLSY} for a compact analog concerning Lagrangian fibrations on projective holomorphic symplectic manifolds. 

The first main result of this paper is a proof of Conjecture \ref{P=W_conj} for curves of genus $2$ and arbitrary rank $r\geq 1$.

\begin{thm}\label{genus_2}
The P=W
Conjecture \ref{P=W_conj} holds when $C$ has genus $g=2$.
\end{thm}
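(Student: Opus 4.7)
The plan is to reduce the theorem to a multiplicativity statement about the perverse filtration, and to establish that statement using the abelian surface degeneration. By Mellit's recent generation theorem, the rational cohomology ring $H^\ast(\CM_{\mathrm{Dol}},\BQ)$ is generated by tautological classes coming from the universal Higgs bundle on $C\times\CM_{\mathrm{Dol}}$. The paper establishes separately (in arbitrary genus $g\geq 2$ and rank $r$) that each tautological generator lies in the piece of the perverse filtration predicted by P=W, and that P=W holds on the subalgebra generated by the even tautological classes. Since the weight filtration on $H^\ast(\CM_B,\BQ)$ is multiplicative by construction, the full P=W conjecture in genus $2$ reduces to proving the multiplicativity
\[
P_kH^\ast(\CM_{\mathrm{Dol}},\BQ)\cdot P_lH^\ast(\CM_{\mathrm{Dol}},\BQ)\subset P_{k+l}H^\ast(\CM_{\mathrm{Dol}},\BQ).
\]

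To establish this multiplicativity in genus $2$, I would exploit the coincidence that the total space of $\Omega_C$ has dimension $2$ and can be degenerated from a family of abelian surfaces. Concretely, one constructs a one-parameter family of quasi-projective surfaces $\CS\to\BA^1$ whose general fiber is an abelian surface $A$ and whose special fiber is a partial compactification of $T^\ast C$. The Beauville--Narasimhan--Ramanan spectral correspondence realizes stable Higgs bundles as torsion sheaves on $T^\ast C$ supported on spectral curves; applied in families, it identifies $\CM_{\mathrm{Dol}}$ as the degeneration of a suitable moduli space $M_v(A)$ of stable torsion sheaves on $A$, under which the relative Hilbert--Chow support morphism specializes to the Hitchin fibration $h$. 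Crucially, the abelian surface $A$ carries Markman's monodromy operators acting on $H^\ast(M_v(A),\BQ)$ and producing symmetries between the various Lagrangian fibrations on $M_v(A)$.

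The concluding step is to show that these Markman symmetries, combined with the relative hard Lefschetz theorem for the proper Hilbert--Chow morphism, impose enough structure to force multiplicativity of the perverse filtration on $H^\ast(M_v(A),\BQ)$, and that this multiplicativity descends through the specialization to $H^\ast(\CM_{\mathrm{Dol}},\BQ)$. The main obstacle I anticipate is twofold: first, controlling the degeneration of perverse filtrations rigorously, since perverse sheaves and their associated filtrations do not behave straightforwardly under specialization; and second, verifying that Markman's operators, defined on cohomology of moduli of sheaves on a fixed abelian surface, can be transported through the one-parameter family in a way that preserves the needed cohomological symmetries. The genus $2$ hypothesis enters critically through the dimension match $\dim A=\dim\Lambda=2$; in higher genus the relevant ``surface'' would have dimension $g>2$, with no abelian variety of the appropriate dimension available, which is precisely why the argument does not extend to $g\geq 3$ and why the full conjecture there remains conditional on multiplicativity.
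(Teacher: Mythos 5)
Your broad architecture --- degenerate $\CM_{\mathrm{Dol}}$ to a moduli space of one-dimensional sheaves on an abelian surface, control the compact side with Markman's monodromy operators, and transport the structure through a specialization map --- is indeed the paper's strategy. But you have misidentified where the genus-$2$ hypothesis actually enters, and this is a genuine gap rather than a detail. It has nothing to do with a dimension match $\dim A=\dim\Lambda=2$ (the Hitchin base has dimension $r^2(g-1)+1$, and the degeneration from $T^*C$ to an abelian surface is available, and is used in this paper, for \emph{every} genus $g\geq 2$). The point is that for $g=2$ the Abel--Jacobi map embeds $C$ into the abelian surface $A=\mathrm{Jac}(C)$ with $j_C^*:H^*(A,\BQ)\to H^*(C,\BQ)$ \emph{surjective}. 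Consequently every tautological generator $c(\gamma,k)$ arises by specializing a tautological class on $\CM_{\beta,A}$ (Proposition \ref{prop4.8}), and since these generate the ring (by Markman's generation theorem --- generation is due to Markman, not Mellit), the specialization map is surjective and the multiplicative splitting transported from the compact side controls all of $H^*(\CM_{\mathrm{Dol}},\BQ)$. For $g\geq 3$ a curve still embeds in an abelian surface, but $j_C^*$ fails to be surjective on $H^1$; that, and not the absence of a suitable abelian variety, is why only Theorems \ref{P=W_even} and \ref{P=W_odd} survive in higher genus. Without isolating this surjectivity your argument never engages the genus-$2$ hypothesis.

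The second gap is the descent of multiplicativity through the specialization, which you flag as an obstacle but do not resolve: for an arbitrary splitting of the perverse filtration upstairs, the images of distinct summands under $\mathrm{sp}^!$ need not intersect trivially downstairs (Remark \ref{remark1}), so multiplicativity does not automatically descend. The paper's mechanism is to realize the splitting of Theorem \ref{taut_abelian} as the \emph{first Deligne splitting} associated with a Lefschetz class (Theorem \ref{strengthened2.1}), show that this class specializes to a Lefschetz class for the Hitchin fibration (Propositions \ref{prop3.2_Hitchin} and \ref{prop4.4}), and use the functoriality of first Deligne splittings under filtered ring homomorphisms preserving the Lefschetz class (Lemma \ref{comparison1}). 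Finally, your detour through "multiplicativity of the perverse filtration plus Mellit's Curious Hard Lefschetz" is unnecessary in genus $2$: once one has a multiplicative splitting of the perverse filtration containing each generator $c(\gamma,k)$ in the piece matching its weight, the perverse and Hodge--Tate decompositions are two multiplicative decompositions agreeing on ring generators, hence coincide, and P=W follows directly in the form of Conjecture \ref{P=W2}.
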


For an irreducible nonsingular curve $C$ of genus $g \geq 2$, it is a general fact that $P=W$ for $\mathrm{GL}_r$ implies $P=W$ for $\mathrm{PGL}_r$; see (\ref{eqn71}) and (\ref{eqn89}). Hence we conclude immediately from Theorem \ref{genus_2} that the $P=W$ conjecture holds for $\mathrm{PGL}_r$ when the curve $C$ has genus $g=2$.

As explained in \cite[Section 1]{dCHM1}, the curious Poincar\'e duality and the Curious Hard Lefschetz conjectures \cite[Conjectures 4.2.4 and 4.2.7]{HRV} for character varieties are consequences of the P=W Conjecture \ref{P=W_conj}.  Moreover, by \cite{CDP} and \cite[Section 9.3]{MT}, the P=W Conjecture \ref{P=W_conj} implies the correspondence between Gopakumar--Vafa invariants and Pandharipande--Thomas invariants \cite[Conjecture 3.13]{MT} for the local Calabi--Yau 3-fold $T^*C \times \BC$ in the curve class $r[C]$. Hence Theorem \ref{genus_2} verifies all these conjectures for genus 2 curves.\footnote{As we discuss later, the curious Poincar\'e and Lefschetz conjectures have been recently shown by Mellit \cite{Mellit} to hold for all genera.}



\subsection{Tautological classes}\label{sec_0.3}
Assume that $C$ has genus $g \geq 2$ and $r \geq 1$. A set of generators for the  cohomology rings identified by the non-abelian Hodge diffeomorphism
\begin{equation}\label{cohomology}
H^\ast(\CM_{\mathrm{Dol}}, \BQ) = H^\ast(\CM_B, \BQ) 
\end{equation}
is described in \cite{Markman} by tautological classes. 

Let
\[
p_C: C \times \CM_{\mathrm{Dol}} \to C,\quad  p_\CM: C \times \CM_{\mathrm{Dol}} \to  \CM_{\mathrm{Dol}}\]
be the projections. We say that a triple
\[
(\CU^\alpha, \theta) = (\CU, \theta, \alpha)
\]
is a {\em twisted universal family} over $C \times \CM_{\mathrm{Dol}}$, if $(\CU, \theta)$ is a universal family and 
\[
\alpha = p_C^\ast \alpha_1 +p_\CM^\ast \alpha_2 \in H^2(C\times \CM_{\mathrm{Dol}}, \BQ),
\]
with $\alpha_1 \in H^2(C, \BQ)$ and $\alpha_2 \in H^2(\CM_{\mathrm{Dol}}, \BQ)$.

For a twisted universal family $(\CU^\alpha, \theta)$, we define the {\em twisted Chern character} $\mathrm{ch}^\alpha(\CU)$ as
\begin{equation*}\label{uni_eqn1}
\mathrm{ch}^\alpha(\CU) = \mathrm{ch}(\CU) \cup \mathrm{exp}(\alpha) \in H^*(C\times \CM_{\mathrm{Dol}}, \BQ),
\end{equation*}
and we denote by 
\[
\mathrm{ch}_k^\alpha(\CU) \in H^{2k}(C\times \CM_{\mathrm{Dol}},\BQ)
\]
its degree $k$ part. The class $\mathrm{ch}^\alpha(\CU)$ is called \emph{normalized} if 
\[
\mathrm{ch}^\alpha_1(\CU)|_{p \times \CM_{\mathrm{Dol}}} =0 \in H^2(\CM_{\mathrm{Dol}}, \BQ), \qquad \mathrm{ch}^\alpha_1(\CU)|_{C \times q} =0 \in H^2(C, \BQ),
\]
with $p \in \CM_{\mathrm{Dol}}$ and $q\in C$ points.  Since two universal families differ by the pull-back
of a line bundle on $\CM_{\mathrm{Dol}}$,  a straightforward calculation shows that  normalized classes on $C\times \CM_{\mathrm{Dol}}$  exist
and are uniquely determined. We introduce the tautological classes associated with the normalized class $\mathrm{ch}^\alpha(\CU)$ as follows.


For any $\gamma \in H^i(C, \BQ)$, let $c(\gamma ,k)$ denote the \emph{tautological class}
\begin{equation}\label{taut_class}
c(\gamma ,k) := \int_\gamma \mathrm{ch}_k^\alpha(\CU) = {p_{\CM}}_\ast( p_C^\ast \gamma \cup \mathrm{ch}^\alpha_k(\CU)) \in H^{i+2k-2}(\CM_{\mathrm{Dol}}, \BQ).
\end{equation}
Markman showed in \cite{Markman} that the classes $c(\gamma,k)$ generate the cohomology ring (\ref{cohomology}) as a $\BQ$-algebra. Furthermore, Shende proved in \cite{Shende} { (cf. Lemma \ref{lem3.1})}  that 
\begin{equation}\label{taut_weight}
    c(\gamma,k) \in {^k\mathrm{Hdg}^{i+2k-2}(\CM_B)},\quad \forall~\gamma \in H^i(C, \BQ),
\end{equation}
where 
\[
^k\mathrm{Hdg}^d(\CM_B) =  W_{2k}H^d(\CM_B, \BQ) \cap F^k H^d(\CM_B, \BC) \cap \bar{F}^k H^d(\CM_B, \BC),
\]
with $F^*$ and $W_*$ the Hodge  and weight filtrations, respectively. It follows that  the rational cohomology  $H^*(\CM_B, \BQ)$ is split  of Hodge--Tate type, and there is a canonical decomposition of 
the graded vector spaces (\ref{cohomology}) induced by the mixed  Hodge theory of $\CM_B$,
\begin{equation}\label{weight_decomp}
H^\ast(\CM_{\mathrm{Dol}}, \BQ) = H^\ast(\CM_B, \BQ) = \bigoplus_{k,d}  {^k\mathrm{Hdg}^d(\CM_B)}.
\end{equation}


The P=W conjecture, 
which can be restated as predicting that
\begin{equation}\label{p=w-explicit}
P_k H^\ast(\CM_{\mathrm{Dol}}, \BQ) = \bigoplus_{k' \leq k}  {^{k'}\mathrm{Hdg}^*(\CM_B)},
\end{equation} 
 is equivalent to the following statements.


\begin{conj}[Equivalent version of P=W]\label{P=W2} 
There exists a splitting $G_*H^*(\CM_{\mathrm{Dol}}, \BQ)$ of the perverse filtration associated with the Hitchin fibration (\ref{Hitchin_fib}) satisfying the following two properties.
\begin{enumerate}
    \item[(a)](Tautological classes) 
   Every tautological class $c(\gamma,k)$ has perversity $k$ and in fact
    \begin{equation*}\label{taut_conj}
    c(\gamma,k) \in G_kH^{\ast}(\CM_{\mathrm{Dol}}, \BQ),\quad \forall k \geq 0, \, \forall \gamma \in H^*(C, \BQ).
    \end{equation*}
  
    \item[(b)] (Multiplicativity) The perverse decomposition is multiplicative, \emph{i.e.}, 
    \[
    \cup: G_k H^d(\CM_\mathrm{Dol}, \BQ)\times G_{k'} H^{d'}(\CM_\mathrm{Dol}, \BQ) \to G_{k+k'} H^{d+d'}(\CM_\mathrm{Dol}, \BQ).
    \]
\end{enumerate}
\end{conj}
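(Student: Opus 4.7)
Since Conjecture \ref{P=W2} is stated as an equivalent reformulation of the P=W Conjecture \ref{P=W_conj}, the plan is to prove the two implications. The argument rests on Shende's weight relation (\ref{taut_weight}), Markman's tautological generation theorem, and the fact that the weight grading is automatically multiplicative for a split Hodge--Tate mixed Hodge structure; once these inputs are assembled, the rest is essentially formal.

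For the direction P=W $\Rightarrow$ Conjecture \ref{P=W2}, I would take $G_k := {}^k\mathrm{Hdg}^\ast(\CM_B)$, the graded pieces in (\ref{weight_decomp}). Since $W_{2k} = \bigoplus_{k' \leq k} G_{k'}$, the hypothesis $P_k = W_{2k}$ makes $G_\ast$ a splitting of the perverse filtration. Property (a) is precisely (\ref{taut_weight}). Property (b) follows from the fact that cup product is a morphism of mixed Hodge structures (compatible with both $W$ and $F$), combined with the formula $^k\mathrm{Hdg} = W_{2k} \cap F^k \cap \bar F^k$, which yields ${}^{k_1}\mathrm{Hdg} \cup {}^{k_2}\mathrm{Hdg} \subseteq {}^{k_1+k_2}\mathrm{Hdg}$.

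Conversely, given a splitting $G_\ast$ of the perverse filtration satisfying (a) and (b), I would show that necessarily $G_k = {}^k\mathrm{Hdg}^\ast(\CM_B)$, whence $P_k = W_{2k}$. By (a) combined with (\ref{taut_weight}), each generator $c(\gamma, k)$ lies in $G_k \cap {}^k\mathrm{Hdg}$; by (b) and the multiplicativity of the Hodge decomposition noted above, any cup product of such generators of total tautological weight $k$ lies in $G_k \cap {}^k\mathrm{Hdg}$ as well. Markman's theorem ensures these products span $H^\ast(\CM_{\mathrm{Dol}}, \BQ)$. Letting $V_k$ denote the span of tautological monomials of weight $k$, the elementary observation that if $V_k \subseteq U_k$ and $\sum_k V_k = \bigoplus_k U_k$ then $V_k = U_k$, applied once with $U_k = G_k$ and once with $U_k = {}^k\mathrm{Hdg}$, forces the two splittings to coincide. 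The main obstacle is not deep: both directions reduce to formal bookkeeping after Shende's and Markman's results are invoked, with the only point requiring real verification being the multiplicativity of the Hodge splitting, which rests on the split Hodge--Tate nature of $H^\ast(\CM_B, \BQ)$ granted by (\ref{weight_decomp}).
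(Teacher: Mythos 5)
Your proof is correct and follows essentially the same route the paper has in mind: it reformulates P=W as (\ref{p=w-explicit}) via the split Hodge--Tate decomposition (\ref{weight_decomp}), takes $G_k = {}^k\mathrm{Hdg}^\ast(\CM_B)$ for the forward direction using Shende's relation (\ref{taut_weight}) and the multiplicativity of the weight and Hodge filtrations, and for the converse uses Markman's generation theorem to force any splitting satisfying (a) and (b) to coincide with $\{{}^k\mathrm{Hdg}^\ast(\CM_B)\}_k$. The paper states this equivalence without writing out the verification, and your argument supplies exactly the intended bookkeeping.
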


We recall that the multiplicativity of the weight filtration
\[
\cup: W_k H^d(X, \BQ) \times W_{k'} H^{d'}(X, \BQ) \to W_{k+k'}H^{d+d'}(X, \BQ)
\]
is standard from mixed Hodge theory. However, the perverse filtration associated with a proper flat morphism is not multiplicative in general; see \cite[Exercise 5.6.8]{Park}. It is mysterious why the perverse filtration associated with the Hitchin fibration (\ref{Hitchin_fib}) should be multiplicative, as is predicted by the P=W conjecture. In fact, one consequence of this paper is that the full P=W conjecture can be reduced to the multiplicativity of the perverse filtration; see Theorem \ref{last}.  This approach of analyzing the P=W conjecture via Conjecture \ref{P=W2} goes back to \cite{dCHM1} where it is applied to prove the case of rank $2$.

\subsection{P=W for tautological classes}
Theorem \ref{genus_2} is concerned with genus $g=2$.
In this section, we state our results for genus $g \geq  2$.
\subsubsection{Even tautological classes}
We consider the $\BQ$-subalgebra
\[
R^*(\CM_{\mathrm{Dol}})  \subset H^*(\CM_{\mathrm{Dol}}, \BQ)= H^*(\CM_B, \BQ)
\]
generated by all the \emph{even} tautological classes
\[
c(\gamma,k), \quad  k\geq 0, \quad  \gamma \in H^0(C, \BQ)\oplus H^2(C, \BQ).
\]
Our next theorem establishes the P=W conjecture in arbitrary genus and rank, but restricted to this subalgebra.
\begin{thm}\label{P=W_even}
The P=W conjecture holds, for any genus $g \geq 2$ and rank $r\geq 1$ for $R^*(\CM_{\mathrm{Dol}})$, \emph{i.e.}
\[
\begin{split}
P_kH^\ast(\CM_{\mathrm{Dol}}, \BQ) \cap R^*(\CM_{\mathrm{Dol}}) & = W_{2k}H^\ast(\CM_{B}, \BQ) \cap R^*(\CM_{\mathrm{Dol}})\\
& = W_{2k+1}H^\ast(\CM_{B}, \BQ) \cap R^*(\CM_{\mathrm{Dol}}).
\end{split}
\]
\end{thm}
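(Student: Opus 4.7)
The approach combines three ingredients: (1) a perversity upper bound for each tautological generator, (2) Shende's Hodge--Tate weight computation~\eqref{taut_weight}, and (3) multiplicativity of the perverse filtration after restriction to $R^*$. Shende's result already delivers the second equality $W_{2k}\cap R^*=W_{2k+1}\cap R^*$ for free, so the substantive content is the identity $P_kH^*(\CM_\mathrm{Dol})\cap R^*=W_{2k}H^*(\CM_B)\cap R^*$.

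The first step is to prove that every tautological class satisfies $c(\gamma,k)\in P_kH^*(\CM_\mathrm{Dol},\BQ)$, i.e.\ part~(a) of Conjecture~\ref{P=W2}. Following the strategy announced in the abstract, one degenerates the Hitchin fibration to the Hilbert--Chow morphism on a moduli space $\CM_S$ of torsion sheaves on an abelian surface $S$. On $\CM_S$ the tautological classes can be placed in the predicted perverse pieces by exploiting Markman's monodromy operators, which generate a large group of symmetries of the perverse filtration associated with the Hilbert--Chow map. Upper semicontinuity of perversity under deformation of the Hitchin base then transports the bound to the Dolbeault side.

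The second step is to construct a splitting $G_*$ of the perverse filtration on $H^*(\CM_\mathrm{Dol},\BQ)$, say of Deligne type associated with a relatively ample tautological class, with the property that $c(\gamma,k)\in G_k$ for each even tautological generator and the cup product satisfies $G_i\cdot G_j\subset G_{i+j}$ after restriction to the subalgebra $R^*$. Again the argument passes through $\CM_S$: on the abelian-surface side the fibration is a Lagrangian fibration on a compact holomorphic symplectic variety, for which the perverse filtration is known to be multiplicative (cf.\ the compact analogues~\cite{SY,HLSY}), and the restriction to even tautological classes is exactly what survives the degeneration thanks to the Markman monodromy action.

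With Steps~1 and~2 in hand the theorem becomes formal. Any monomial $\prod_i c(\gamma_i,k_i)$ with $\gamma_i\in H^0(C,\BQ)\oplus H^2(C,\BQ)$ lies in $P_{\sum_i k_i}\cap W_{2\sum_i k_i}\cap R^*$ by the two steps above combined with multiplicativity of the weight filtration. By Shende's Hodge--Tate decomposition of $H^*(\CM_B,\BQ)$, the weight-$2s$ piece of $R^*$ is spanned by monomials with $\sum_i k_i=s$, and the analogous description for $G_s\cap R^*$ furnished by Step~2 yields $G_k\cap R^*=P_k\cap R^*=W_{2k}\cap R^*$, proving the theorem. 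The main obstacle is Step~2: unlike the weight filtration, the perverse filtration is not multiplicative in general, and extending multiplicativity from $R^*$ to the full tautological algebra is precisely the missing ingredient for the full P=W conjecture.
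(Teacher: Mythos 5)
Your outline reproduces the paper's announced strategy and the formal endgame is sound: once every even monomial $\prod_i c(\gamma_i,k_i)$ is known to lie in $G_{\sum k_i}\cap{}^{\sum k_i}\mathrm{Hdg}$ for a decomposition $G_*$ that splits the perverse filtration, the identity $P_k\cap R^* = W_{2k}\cap R^*=W_{2k+1}\cap R^*$ follows. But your Steps 1 and 2 are precisely the content of the theorem, and in both places the mechanism you invoke is not the one that works. Multiplicativity on the abelian surface side does not come from the compact results of \cite{SY,HLSY}: $\CM_{\beta,A}$ is of $A^{[n]}\times\hat A$ deformation type, not an irreducible hyper-K\"ahler manifold, and what is needed is not multiplicativity of the filtration but a multiplicative \emph{splitting} in which the twisted Chern characters of a universal class sit in pure pieces. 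The paper obtains this by explicit computation on $A^{[n]}\times\hat A$ for the special surface $E\times E'$ and the \emph{primitive} class $\sigma+nf$ (Theorem \ref{taut_Hilb}, quoting \cite{Z,SZ}), and then transports it to $(A,\beta=r[C])$ -- where $\beta$ is imprimitive for $r\ge 2$, so the two fibrations are not even deformation equivalent -- via Markman's monodromy operators extended to $\BC$-coefficients (Theorems \ref{monodromy_main} and \ref{thm2.6}). That is the actual role of the monodromy operators; they have nothing to do with ``what survives the degeneration,'' which is governed solely by the image of $j_C^*:H^*(A,\BQ)\to H^*(C,\BQ)$ (this is why the even part comes for free while $H^1$ requires the $\mathrm{Sp}_{2g}$-monodromy argument of \cite{A,dCM}).

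The more serious gap is in the transport to the Dolbeault side. ``Upper semicontinuity of perversity'' only gives $\mathrm{sp}^!(P_k)\subseteq P_k$ (Proposition \ref{Prop3.1}), i.e.\ the inequality half of the statement. To conclude the equality $P_k\cap R^*=W_{2k}\cap R^*$ you must know that the images $\mathrm{sp}^!(\widetilde G_i)$ of the summands upstairs remain in direct sum and still split the perverse filtration downstairs; for an arbitrary splitting on $\CM_{\beta,A}$ this can fail, since $\mathrm{sp}^!$ is far from injective (Remark \ref{remark1}). The paper's fix -- and the missing idea in your sketch -- is to realize the splitting upstairs as the first Deligne splitting of a cohomological Lefschetz class $\eta_{\beta,A}$ (Theorem \ref{strengthened2.1}), verify via the numerical criterion of Proposition \ref{prop3.2_Hitchin} that $\mathrm{sp}^!(\eta_{\beta,A})$ is again a Lefschetz class for the Hitchin map (it is \emph{not} relatively ample: it must have $\xi^g\neq 0$ along $H^2(\mathrm{Pic}^0(C),\BQ)$), and then use the purely cohomological characterization of the first Deligne splitting to see that $\mathrm{sp}^!$ carries summands to summands (Lemma \ref{comparison1}, Proposition \ref{prop4.4}). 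Your alternative of taking the Deligne splitting of a relatively ample tautological class on $\CM_{\mathrm{Dol}}$ is reconcilable with this only via the independence statement of Remark \ref{dolglcansplit}, which itself rests on the structure of $H^2$ established in Proposition \ref{prop3.2_Hitchin}. Without these ingredients Step 2 restates the theorem rather than proving it.
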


In fact, in the proof of Theorem  \ref{P=W_even}, we obtain a multiplicative decomposition 
\begin{equation*}
R^*(\CM_{\mathrm{Dol}}) = \bigoplus_{k,d} G_k R^d(\CM_{\mathrm{Dol}})
\end{equation*}
 that splits the restricted perverse filtration $P_kH^\ast(\CM_{\mathrm{Dol}}, \BQ) \cap R^*(\CM_{\mathrm{Dol}})$ and such that 
\[
G_k R^d(\CM_{\mathrm{Dol}}) =  {^k\mathrm{Hdg}^d}(\CM_B) \cap R^{d}(\CM_{\mathrm{Dol}}).
\]
We refer to Section \ref{Section4.6} for more details.

\subsubsection{Odd tautological classes}
The following theorem concerns odd tautological classes
\begin{equation}\label{odd_taut}
c(\gamma, k) \in H^{2k-1}(\CM_{\mathrm{Dol}}, \BQ), \quad \forall k \geq 0, \, \forall \gamma \in H^1(C, \BQ).
\end{equation}

\begin{thm}\label{P=W_odd}
Any odd tautological class (\ref{odd_taut}) has perversity $k$.
\end{thm}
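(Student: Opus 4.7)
My strategy is to deduce Theorem~\ref{P=W_odd} from the even case (Theorem~\ref{P=W_even}) using the paper's abelian-surface framework. I would treat the two halves of the statement separately: the upper bound $c(\gamma, k) \in P_k H^{2k-1}(\CM_{\mathrm{Dol}}, \BQ)$, which is the real content, and the lower bound $c(\gamma, k) \notin P_{k-1} H^{2k-1}(\CM_{\mathrm{Dol}}, \BQ)$. The latter is an immediate consequence of Shende's weight bound~\eqref{taut_weight}, which forces the mixed Hodge weight of $c(\gamma, k)$ to be exactly $2k$, combined with the ``easy half'' inclusion $P_k H^*(\CM_{\mathrm{Dol}}, \BQ) \subseteq W_{2k} H^*(\CM_B, \BQ)$. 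This one-sided inclusion, for Hodge--Tate classes in our Hitchin setting, follows by comparing Deligne's perverse splittings with the weight splitting of the mixed Hodge structure on $\CM_B$.

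For the upper bound, I would follow the paper's main technique and embed the situation into that of a moduli space $\CM_S$ of stable torsion sheaves on an abelian surface $S$, equipped with its Hilbert--Chow morphism $\pi_S: \CM_S \to \Chow$. As outlined in the introduction, the Hitchin fibration $h: \CM_{\mathrm{Dol}} \to \Lambda$ arises as a degeneration of $\pi_S$ via a one-parameter family of abelian surfaces specialising to (a compactification of) $T^*C$; under this degeneration, the class $c(\gamma, k)$ extends to a tautological class $\tilde{c}(\tilde\gamma, k) \in H^*(\CM_S, \BQ)$ with $\tilde\gamma \in H^1(S, \BQ) \oplus H^3(S, \BQ)$ a lift of $\gamma$. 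Since the perverse filtration is upper semi-continuous under such a proper degeneration, it suffices to show $\tilde{c}(\tilde\gamma, k) \in P_k H^*(\CM_S, \BQ)$ with respect to $\pi_S$.

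On $\CM_S$, Markman's monodromy acts on the full cohomology $H^*(\CM_S, \BQ)$, and the part coming from translations by $S \times \hat{S}$ together with Fourier--Mukai-type operators intertwines odd and even tautological classes. Using this action, one can reduce $\tilde{c}(\tilde\gamma, k)$ to an even tautological class whose perversity is governed by the abelian-surface counterpart of Theorem~\ref{P=W_even}. The main obstacle, and the crux of the argument, is to pin down the effect of the \emph{odd} monodromy operators on the perverse filtration of $\pi_S$: while the even (lattice) part acts by algebraic correspondences that manifestly preserve $P_\bullet$, the odd generators change cohomological parity and must be shown to shift perversity by exactly the predicted amount, in the spirit of the classical Heisenberg/Clifford-type action on the cohomology of Hilbert schemes of points on a surface.
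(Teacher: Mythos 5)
Your proposal does not follow the paper's route, and it has two genuine gaps. First, the claimed reduction of odd tautological classes to even ones via Markman's monodromy does not work: the operators $\gamma_{g,v}$ are induced by elements $g\in\mathrm{Spin}(S_A^+)_v$ acting on $H^*(A)$ by \emph{parity-preserving} isometries, and they are graded ring automorphisms of $H^*(\CM_{\beta,A})$; there are no ``odd monodromy operators'' changing cohomological parity, so an odd tautological class $\int_\gamma\mathrm{ch}_k^\alpha(\CF_\beta)$ with $\gamma\in H^1(A)\oplus H^3(A)$ is carried to another odd one, never to an even one. The step you identify as ``the crux'' is therefore not merely unproven but based on a mechanism that does not exist in this framework. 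The paper avoids this entirely: Theorem \ref{taut_abelian} is proved for \emph{all} $\gamma\in H^*(A,\BC)$ at once, because in the model case $A=E\times E'$ the full Chern character $\mathrm{ch}(\CF^n_\beta)$ is shown to be balanced, the odd classes being supplied by Theorem \ref{taut_Hilb}(a) for $\mathrm{ch}(\CI_n)$ on $A\times A^{[n]}$ (which already contains all odd K\"unneth components); monodromy is then used only to move between pairs $(A,\beta)$ with $\beta^2=\beta'^2$, not to exchange parities.

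Second, you miss the difficulty that is actually specific to the odd case and is the reason Theorem \ref{P=W_odd} is stated separately from Theorem \ref{P=W_even}: the specialization $\mathrm{sp}^!$ only controls $c(\gamma,k)$ for $\gamma$ in the image of $j_C^*:H^*(A,\BQ)\to H^*(C,\BQ)$, and on $H^1$ this image has dimension at most $4$, hence is a proper subspace of $H^1(C,\BQ)$ whenever $g\geq 3$. The paper closes this gap by invoking A'Campo's theorem that the monodromy of the moduli space $\CM_g$ of curves is the full symplectic group $\mathrm{Sp}_{2g}$, so that $\mathrm{Im}(j_C^*)\subset H^1(C,\BQ)$ generates all of $H^1(C,\BQ)$ under monodromy, together with the local constancy of the perverse filtration \cite{dCM}; nothing in your argument addresses this. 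Finally, your lower bound is circular: the inclusion $P_kH^*(\CM_{\mathrm{Dol}},\BQ)\subseteq W_{2k}H^*(\CM_B,\BQ)$ is not an ``easy half'' — it is one half of the P=W conjecture itself and is not available by comparing splittings. In the paper the exact perversity comes for free, since $c(\gamma,k)$ is shown to lie in the graded piece $\widetilde{G}_k$ of an explicit splitting of the perverse filtration (the first Deligne splitting of a Lefschetz class, transported by $\mathrm{sp}^!$ via Proposition \ref{prop4.4}), which simultaneously gives $c(\gamma,k)\in P_k$ and $c(\gamma,k)\notin P_{k-1}$ unless the class vanishes.
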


Theorems \ref{P=W_even} and \ref{P=W_odd} provide strong evidence for the P=W Conjecture
\ref{P=W_conj} (and for its reformulation Conjecture \ref{P=W2}) for any genus $g \geq 2$. In particular, we obtain that all tautological generators $c(\gamma,k)$ satisfy the $P=W$ match:
\begin{equation}\label{eqn9}
    c(\gamma, k) \in P_k H^*(\CM_{\mathrm{Dol}}, \BQ),\quad \forall~ \gamma \in H^*(C, \BQ).
\end{equation}

\subsubsection{Multiplicativity is equivalent to P=W} The P=W Conjecture \ref{P=W_conj}  implies immediately that the perverse filtration $P_*H^*(\CM_{\mathrm{Dol}}, \BQ)$ is multiplicative, \emph{i.e.}
\begin{equation}\label{multi1}
\cup: P_k H^d(\CM_\mathrm{Dol}, \BQ)\times P_{k'} H^{d'}(\CM_\mathrm{Dol}, \BQ) \to P_{k+k'} H^{d+d'}(\CM_\mathrm{Dol}, \BQ).
\end{equation}
The following theorem shows that the converse is also true. It is a corollary of Theorems \ref{P=W_even} and \ref{P=W_odd}, and of the Curious Hard Lefschetz conjecture recently established by Mellit \cite{Mellit}.

\begin{thm}\label{last}
The P=W Conjecture \ref{P=W_conj} is equivalent to the multiplicativity (\ref{multi1}) of the perverse filtration.
\end{thm}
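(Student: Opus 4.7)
The forward implication is immediate: the weight filtration of any mixed Hodge structure is multiplicative ($W_a \cup W_b \subseteq W_{a+b}$), so if $P_k = W_{2k}$ the perverse filtration inherits multiplicativity. I focus on the converse. Assuming \eqref{multi1}, I show
\[
P_k H^\ast(\CM_{\mathrm{Dol}}, \BQ) = W_{2k} H^\ast(\CM_B, \BQ) = W_{2k+1} H^\ast(\CM_B, \BQ)
\]
under the identification \eqref{cohomology}.

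For the inclusion $W_{2k} \subseteq P_k$, Markman's theorem asserts that the tautological classes $c(\gamma, k)$ generate the cohomology ring \eqref{cohomology}, and Shende's result \eqref{taut_weight} places every generator $c(\gamma, k)$ in the pure Hodge--Tate piece ${^k\mathrm{Hdg}}^\ast(\CM_B)$. Since the mixed Hodge structure on $H^\ast(\CM_B, \BQ)$ is Hodge--Tate by \eqref{weight_decomp} and since the weight filtration is multiplicative, $W_{2k} H^\ast(\CM_B)$ is exactly the $\BQ$-span of monomials $\prod_i c(\gamma_i, k_i)$ with $\sum_i k_i \leq k$. By Theorems \ref{P=W_even} and \ref{P=W_odd}, every tautological generator $c(\gamma, k)$ lies in $P_k H^\ast(\CM_{\mathrm{Dol}})$, and the hypothesized multiplicativity \eqref{multi1} propagates this to every such monomial. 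This yields $W_{2k} \subseteq P_k$.

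For the reverse inclusion $P_k \subseteq W_{2k}$, I combine the Relative Hard Lefschetz theorem for the Hitchin fibration \eqref{Hitchin_fib} with Mellit's Curious Hard Lefschetz theorem for $H^\ast(\CM_B)$ \cite{Mellit}. Both produce Poincar\'e-type symmetries of the respective graded pieces about a common middle index $\delta$ equal to the complex dimension of a Hitchin fibre (which equals $\tfrac{1}{2}\dim_\BC \CM_B$):
\[
\dim \mathrm{Gr}^P_{\delta - k} H^d(\CM_{\mathrm{Dol}}) = \dim \mathrm{Gr}^P_{\delta + k} H^{d + 2k}(\CM_{\mathrm{Dol}}),
\]
\[
\dim \mathrm{Gr}^W_{2(\delta - k)} H^d(\CM_B) = \dim \mathrm{Gr}^W_{2(\delta + k)} H^{d + 2k}(\CM_B).
\]
Summing over $d$, the perverse and weight Betti numbers $P^k := \sum_d \dim \mathrm{Gr}^P_k H^d$ and $W^k := \sum_d \dim \mathrm{Gr}^W_{2k} H^d$ are both symmetric in $k$ about $\delta$. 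Combined with the partial-sum inequality $\sum_{k' \leq k} W^{k'} \leq \sum_{k' \leq k} P^{k'}$ from the first inclusion and the total identity $\sum_k W^k = \sum_k P^k$, the common symmetry forces $W^k = P^k$ for every $k$. Using the pointwise containment $W_{2k} H^d \subseteq P_k H^d$ together with the equality of summed dimensions $\sum_d \dim W_{2k} H^d = \sum_d \dim P_k H^d$, I conclude $W_{2k} H^d = P_k H^d$ for every $d$. The remaining equality $W_{2k} = W_{2k+1}$ is the Hodge--Tate property of $H^\ast(\CM_B, \BQ)$.

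The main technical point is the alignment of the middle indices $\delta$ for the two Hard Lefschetz symmetries: on the Dolbeault side $\delta$ is the dimension of a Hitchin fibre, and on the Betti side it is half the complex dimension of $\CM_B$, which agree via non-abelian Hodge theory. Granted this alignment, the rest of the argument is an elementary bookkeeping step combining the symmetry, the partial-sum inequality, and the total-dimension identity.
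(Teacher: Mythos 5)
Your proof is correct and follows essentially the same route as the paper: the inclusion $W_{2k}H^*(\CM_B,\BQ)\subseteq P_kH^*(\CM_{\mathrm{Dol}},\BQ)$ is obtained exactly as in the text from Markman's tautological generation, Shende's weight computation (\ref{taut_weight}), the perversity statement (\ref{eqn9}) of Theorems \ref{P=W_even} and \ref{P=W_odd}, and the assumed multiplicativity (\ref{multi1}), while the upgrade from this inclusion to equality via the aligned Relative and Curious Hard Lefschetz symmetries is precisely the dimension-counting step that the paper delegates to \cite[Theorem 1.5.3]{Mellit} and the paragraph following \cite[Corollary 1.5.4]{Mellit}. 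The only difference is that you write out that bookkeeping argument explicitly, which is a correct unpacking rather than a genuinely different approach.
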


In fact, we deduce from (\ref{taut_weight}), (\ref{eqn9}), and (\ref{multi1}) that
\[
W_{2i}H^*(\CM_B, \BQ) \subset P_iH^*(\CM_{\mathrm{Dol}}, \BQ), \quad \forall i\geq 0,
\]
which further implies Conjecture \ref{P=W_conj} by \cite[Theorem 1.5.3]{Mellit}
and the second paragraph following \cite[Corollary 1.5.4]{Mellit}.

{
\begin{rmk}\label{r=1}
All the results of this paper hold for arbitrary rank $r \geq 1$. The case $r=1$ is classical and easy: the Dolbeault moduli space is the cotangent bundle of a Jacobian of some degree  of the curve and the Betti moduli space
is a product of $\mathbb G_m^{2g}$. In this paper, starting with the assumption {$\beta^2\geq 6$} in Section \ref{Sec2.1}, we focus on the case of rank $r\geq 2$, {which corresponds to $\beta^2 \geq 8$}. The case of rank one could be dealt with also by using the methods of this paper, but at the price of some easy modifications that we deemed a distraction. 
\end{rmk}
}
\subsection{Idea of the proof}

The key idea in proving our results
is to first study perverse filtrations for certain \emph{compact} hyperk\"ahler manifolds, motivated in part by earlier work of the third author and Yin \cite{SY}.
That is, we consider an abelian surface $A$ with ample curve class $\beta \in H_2(A, \BZ)$, and prove analogs of 
our main result for the moduli space
$\CM_{\beta,A}$ of one-dimensional sheaves with support in the class $\beta$; see Theorem \ref{taut_abelian} for the precise statement.
The advantage of the compact geometry is that we can use
results of Markman \cite{Markman3} on monodromy operators for $\CM_{\beta,A}$.  These are symmetries, arising from parallel transport and Fourier-Mukai transforms, that do not appear in the Hitchin setting and which heavily constrain tautological classes of universal families. We show that these operators relate perverse filtrations and tautological classes for different choices of $\beta$ and, in particular, allow us to pass from the case of imprimitive $\beta$ to primitive $\beta$, which can be studied directly using \cite{Z,SZ}.

In order to apply this to Hitchin fibrations, we consider the degeneration to the normal cone of an embedding
of {a genus $g$ curve $C$} into an abelian surface
\[
j_C: C\hookrightarrow A,
\]
and study the specialization map on cohomology of the associated moduli spaces.  In general, there is a great deal of information loss in this specialization because the family is non-proper. Nevertheless, we are able to show its compatibility with tautological classes and perverse splittings.  Finally we use these compatibilities  to deduce our main theorems {for such a curve $C$; this implies our results hold  for all curves in view of \cite{dCM}.}

\subsection{Outline of paper}

We briefly outline the contents of this paper.  In Section $1$, we recall some basic facts about perverse filtrations and earlier results of \cite{Z,SZ} for Hilbert schemes of points on the special abelian surface $E \times E'$.
In Section $2$, we pass to the setting of abelian surfaces.  One  key result  here is the splitting Theorem \ref{taut_abelian}.  After recalling Markman's work on monodromy operators, we prove the result for primitive classes $\beta$ by a direct analysis, and then we combine the primitive case with Markman's results  to establish the general case.
In Section $3$, we formulate and prove  Theorem \ref{strengthened2.1},  which is a strengthened version of the splitting of Theorem \ref{taut_abelian} and which  is  more robust for specialization arguments.
Finally, in section $4$, we study  specialization maps for our degeneration, and we use them to prove the main theorems.

While this paper is written in logical order, some readers may prefer to start from \S\ref{se4}, and refer backwards as needed.

\subsection{Acknowledgements}
We are grateful to Tam\'as Hausel, Jochen Heinloth, Luca Migliorini, Vivek Shende, David Vogan, Qizheng Yin, Zhiwei Yun, and Zili Zhang for helpful discussions.  M.A. de Cataldo is partially supported by NSF grants DMS 1600515 and 1901975.  D. Maulik is partially supported by NSF FRG grant DMS-1159265. J. Shen is supported by the NSF grant DMS 2134315.

\section{Perverse filtrations} \label{sec1}

\subsection{Overview}
We begin with the definition and some relevant properties of the perverse filtration associated with a proper surjective morphism $\pi: X\rightarrow Y$. Some  references are \cite{BBD, dCM0, dCM1, WSB}. 
Throughout this section, for simplicity, we assume $X$ and $Y$ are nonsingular.

Following \cite{dCHM3, Z, SZ}, we discuss the perverse filtration for the Hilbert scheme of $n$-points on an abelian surface $E \times E'$ product of two elliptic curves, induced by the natural  second projection morphism
\[
E \times E' \to E'.
\]
 This example plays a crucial role in Section 2 concerning moduli of sheaves on abelian surfaces.

\subsection{Perverse sheaves}
Let $D_c^b(Y)$ denote the bounded derived category of $\BQ$-constructible sheaves on $Y$, and let $\BD: D_c^b(Y)^{\mathrm{op}} \to D_c^b(Y)$ be the Verdier duality functor. The full subcategories 
\begin{align*}
    ^ \mathfrak{p} D_{\leq 0}^b(Y) & = \left\{\CE \in D_c^b(Y): \dim \mathrm{Supp}(\CH^i(\CE)) \leq -i \right\}, \\
    ^ \mathfrak{p} D_{\geq 0}^b(Y) & = \left\{\CE \in D_c^b(Y): \dim \mathrm{Supp}(\CH^i(\BD\CE)) \leq -i \right\}
\end{align*}
give rise to the \emph{perverse $t$-structure} on $D_c^b(Y)$, whose heart is the abelian category of perverse sheaves,
\[
\mathrm{Perv}(Y) \subset D_c^b(Y).
\]

For $k \in \BZ$, let $^\mathfrak{p}\tau_{\leq k}$ be the truncation functor associated with the perverse $t$-structure. Given an object $\CC \in D_c^b(Y)$, there is a natural morphism
\begin{equation}\label{trun0}
^\mathfrak{p}\tau_{\le k}\CC \rightarrow \CC.
\end{equation}
For the morphism $\pi:X \to Y$, we obtain from (\ref{trun0}) the morphism
\[
^\mathfrak{p}\tau_{\le k}R\pi_\ast \BQ_X \rightarrow R\pi_\ast \BQ_X,
\]
which further induces a morphism of (hyper-)cohomology groups
\begin{equation}\label{perv_filt}
H^{d-(\dim X -{ R})}\Big{(}Y, \, ^\mathfrak{p}\tau_{\le k}(R\pi_\ast \BQ_X[\dim X - { R}]) \Big{)} \rightarrow H^d(X, \BQ).
\end{equation}
Here
\begin{equation}\label{defect}
    {R} = \dim X \times_Y X - \dim X
\end{equation}
is the \emph{defect of semismallness}. The $k$-th piece of the perverse filtration (\ref{Perv_Filtration})
\[
P_kH^d(X, \BQ) \subset H^d(X, \BQ)
\]
is defined to be the image of (\ref{perv_filt}).\footnote{Here the shift $[\dim X- {R}]$ is to ensure that the perverse filtration is concentrated in the degrees $[0, 2 { R}]$.} 

We say that a  graded vector space decomposition
\[
H^*(X, \BQ) = \bigoplus_{k,d} G_k H^d(X, \BQ)
\]
\emph{splits} the perverse filtration, if 
\[
P_k H^d(X, \BQ) = \bigoplus_{i \leq k} G_i H^d(X, \BQ), \quad \forall k,d.
\]

\subsection{The decomposition theorem}\label{sec1.3}
Perverse filtrations can be described through the decomposition theorem \cite{BBD, dCM0}. By applying the decomposition theorem to the morphism $\pi: X \to Y$, we obtain an isomorphism
\begin{equation}\label{decomp}
R\pi_*\BQ_X[\dim X- { R}]\simeq \bigoplus_{i=0}^{2{ R}}\mathcal{P}_i[-i] \in D^b_c(Y)
\end{equation}
with $\mathcal{P}_i  \in \mathrm{Perv}(Y)$. The perverse filtration can be identified as
\[
P_kH^d(X,\BQ)=\mathrm{Im}\Big\{H^{d-(\dim X - { R})}(Y, \bigoplus_{i=0}^k\mathcal{P}_i[-i])\to H^d(X,\BQ)\Big\}.
\]
In general, the isomorphism (\ref{decomp}) in the decomposition theorem is not canonical. Once we fix such an isomorphism $\phi$, we obtain a splitting $G_\ast H^\ast(X, \BQ)$ of the perverse filtration,
\[
P_kH^d(X, \BQ) = \bigoplus_{i\leq k} G_iH^d(X, \BQ).
\]
Here
\[
G_iH^d(X, \BQ) = \mathrm{Im}\Big\{ H^{d-(\dim X - { R})}(Y, \mathcal{P}_i[-i])\to H^d(X,\BQ)\Big\}
\]
with
\[
H^{d-(\dim X - {R})}(Y, \mathcal{P}_i[-i])\to H^d(X,\BQ)
\]
the morphism induced by $\phi^{-1}$.

\subsection{Perverse filtration for projective bases}
We review another description of the perverse filtration associated with
\[
\pi: X \to Y,
\]
when $Y$ is projective.

We fix $\eta$ to be an ample class on $Y$, and we consider
\[
L = \pi^* \eta \in H^2(X, \BQ).
\]
The class $L$ acts on $H^*(X, \BQ)$ as an nilpotent operator via cup product
\[
L: H^*(X, \BQ) \xrightarrow{\cup L} H^*(X, \BQ).
\]

The following proposition shows that the filtration (\ref{Perv_Filtration}) is completely described by an ample class on the base.  It is stated in the special case needed in this paper.

\begin{prop}[\cite{dCM0} Proposition 5.2.4.] \label{prop1.1}
Assume that
\begin{equation*}
\dim X = 2 \dim Y = 2R.
\end{equation*}
Then we have
\[
P_kH^m(X, \BQ) = \sum_{i\geq 1} \left(
\mathrm{Ker}(L^{R+k+i-m}) \cap \mathrm{Im}(L^{i-1}) \right) \cap H^m(X, \BQ).
\]
\end{prop}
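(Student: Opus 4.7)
My plan is to reduce Proposition~\ref{prop1.1} to an elementary identity for $\mathfrak{sl}_2$-representations, via the decomposition theorem and hard Lefschetz on the projective base $Y$. First I would invoke the decomposition theorem recalled in Section~\ref{sec1.3} and fix an isomorphism
\[
R\pi_*\BQ_X[\dim X - R] \simeq \bigoplus_{a=0}^{2R} \CP_a[-a],
\]
with $\CP_a$ semisimple perverse sheaves on $Y$. Under the resulting identification
\[
H^m(X, \BQ) = \bigoplus_{a=0}^{2R} H^{m - R - a}(Y, \CP_a),
\]
the projection formula shows that $L = \pi^*\eta$ preserves the splitting and acts on each summand simply by cupping with $\eta$ on the base. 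By construction, $P_k H^m(X, \BQ) = \bigoplus_{a \leq k} H^{m - R - a}(Y, \CP_a)$.

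Second, since $Y$ is projective and each $\CP_a$ is pure and semisimple, hard Lefschetz for perverse sheaves on $Y$ yields isomorphisms
\[
\eta^j : H^{-j}(Y, \CP_a) \xrightarrow{\sim} H^j(Y, \CP_a), \qquad j \geq 0,
\]
which, together with a dual lowering operator, promote each $V_a := \bigoplus_j H^j(Y, \CP_a)$ to a finite-dimensional $\mathfrak{sl}_2$-module with $L$ the raising operator and $H^j(Y, \CP_a)$ sitting in weight $j$. Since $\dim Y = R$, all weights lie in the symmetric range $[-R, R]$.

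Third, it remains to show for each pair $(a, m)$ that a nonzero class $\alpha \in H^{m - R - a}(Y, \CP_a)$ belongs to $\sum_{i \geq 1}\bigl(\ker L^{R + k + i - m} \cap \operatorname{Im} L^{i-1}\bigr)$ if and only if $a \leq k$. Decomposing $V_a$ into irreducible $\mathfrak{sl}_2$-components of highest weight $h$, at weight $w = m - R - a$ the conditions $\alpha \in \operatorname{Im} L^{i-1}$ and $\alpha \in \ker L^N$ translate into the interval constraint $2i - 2 - w \leq h \leq w + 2N - 2$. Substituting $N = R + k + i - m$, the admissible interval for $h$ has width $2(k - a)$; hence it is nonempty for some $i \geq 1$ precisely when $a \leq k$, and in that case every admissible highest weight $h \geq |w|$ occurring in $V_a$ is reached by a suitable $i \geq 1$, as a short integer-interval chase shows. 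Both inclusions follow.

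The main obstacle is purely bookkeeping: the exponents $R + k + i - m$ and $i - 1$ must align on the nose with the highest-weight and image-length constraints inside each $\mathfrak{sl}_2$-irreducible, and one must check that no corner cases arise at the boundaries of the weight range or at small values of $N$ (where $\ker L^N$ may be trivial by convention). The hypothesis $\dim X = 2\dim Y$ enters precisely in fixing $R = \dim Y$ and ensuring the weights of each $V_a$ fill out the symmetric range $[-R, R]$; a different dimensional balance would alter the shifts in the formula but not the overall structure of the argument.
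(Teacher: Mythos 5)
Your argument is correct. Note that the paper does not prove this statement at all: it is quoted verbatim from \cite{dCM0}, Proposition 5.2.4, so the only meaningful comparison is with that reference, and your route --- splitting $R\pi_*\BQ_X$ by the decomposition theorem, observing via the projection formula that $L=\pi^*\eta$ acts diagonally on the summands $H^{*}(Y,\CP_a)$, invoking hard Lefschetz for the semisimple perverse sheaves $\CP_a$ to get an $\mathfrak{sl}_2$-structure on each $V_a$, and then matching the exponents $R+k+i-m$ and $i-1$ against the highest-weight constraints inside each irreducible --- is essentially the proof given there, where the formula is likewise derived by comparing the perverse filtration with the (shifted) weight filtration of the nilpotent operator $L$. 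Your interval bookkeeping checks out: for a class of weight $w=m-R-a$ the admissible highest weights form an interval of width $2(k-a)$, empty exactly when $a>k$, and for $a\le k$ the choice $i=(h+w)/2+1\ge 1$ gives $N_i=k-a+(h-w)/2+1\ge 1$ and $w+2N_i-2=h+2(k-a)\ge h$, so both inclusions follow. The only cosmetic quibble is the closing remark that the weights of $V_a$ ``fill out'' the range $[-R,R]$; they merely lie in that range, which is all the argument needs.
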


\subsection{Abelian surfaces and Hilbert schemes}\label{Section1.5}
Let $A = E \times E'$ be an abelian surface with $E$ and $E'$ elliptic curves, and let $n \in \BZ^{\geq 0}$.
The natural projection, 
\[
p: A \to E'
\]
induces a morphism
\[
p_n: A^{[n]} \to {E'}^{(n)}
\]
from the Hilbert scheme of $n$ points on $A$ to the symmetric product of the elliptic curve $E'$. 
We briefly review the construction \cite{SZ} of a  \emph{canonical} splitting 
\begin{equation}\label{canonical_split}
H^\ast(A^{[n]}, \BQ) = \bigoplus_{i,d}\widetilde{G}_iH^d(A^{[n]},\BQ)
\end{equation}
of the perverse filtration associated with $p_n$, which culminates with  the explicit formula (\ref{cansplitting15}).

We start with a canonical decomposition 
\begin{equation}\label{split1}
    H^*(A, \BQ) = \bigoplus_{i,d} \widetilde{G}_iH^d(A, \BQ)
\end{equation}
with $\widetilde{G}_iH^d(A, \BQ)$ the K\"unneth factor 
\[
H^i(E, \BQ) \otimes H^{d-i}(E',\BQ) \subset H^d(A, \BQ).
\]
Since $p: A\to E'$ is a trivial fibration, (\ref{split1}) splits the perverse filtration associated with $p$. Assume that we already have decompositions of $H^*(X_1, \BQ)$ and $H^*(X_2, \BQ)$, a decomposition of $H^\ast(X_1 \times X_2, \BQ)$ can be constructed by using the K\"unneth decomposition. In particular, we obtain the direct sum decomposition of $H^*(A^n, \BQ)$ with summands
\[
\widetilde{G}_kH^*(A^n, \BQ)= \left \langle \alpha_1 \boxtimes \cdots \boxtimes \alpha_n;~~ \alpha \in \widetilde{G}_{k_i}H^*(A, \BQ),~~ \sum_{i} {k_i} = k   \right \rangle,
\]
whose $\mathfrak{S}_n$-invariant part induces a decomposition 
\[
H^*(A^{(n)}, \BQ) = \bigoplus \widetilde{G}_iH^d(A^{(n)}, \BQ).
\]
In turn, by using  K\"unneth decompositions again, this gives us  canonical decompositions for
\[
H^\ast(A^{(n_1)}\times A^{(n_2)} \times \cdots \times A^{(n_k)}, \BQ), \quad n_i \geq 1.
\]
Finally, the cohomology of the Hilbert scheme $A^{[n]}$ is related to the cohomology of symmetric products by \cite{Go2, dCM3}.  We employ exponential notation for partitions $\nu:= (\nu_i)_{i=1}^l$, with  $\nu_i >0$ and
$\sum_i \nu_i =n,$ of a positive integer $n$. Namely, we let $a_k$ be the number of times the integer $k$ appears
in the partition, so that the expression 
\[
1^{a_1}2^{a_2}\cdots n^{a_n}\]
 recovers the partition $\nu$.
We use $A^{(\nu)}$ to denote the variety $A^{(a_1)} \times A^{(a_2)} \times \cdots \times A^{(a_n)}$. The cohomology group $H^d(A^{[n]}, \BQ)$ admits a canonical decomposition
\begin{equation}\label{GS_decomp}
H^d(A^{[n]}, \BQ) = \bigoplus_{\nu} H^{d+2l(\nu)-2n}(A^{(\nu)}, \BQ)
\end{equation}
where $\nu$ runs through all partitions of $n$ and $l(\nu)$ is the length of $\nu$. 
The desired canonical splitting (\ref{canonical_split}) is then defined by setting
\begin{equation}\label{cansplitting15}
\widetilde{G}_kH^d(A^{[n]}, \BQ) = \bigoplus_{\nu} \widetilde{G}_{k+l(\nu)-n}H^{d+2l(\nu)-2n}(A^{(\nu)}, \BQ)
\end{equation}
to be the sub-vector space of $H^d(A^{[n]}, \BQ)$ under the identification (\ref{GS_decomp}). By \cite[Proposition 4.12]{Z}, this decomposition splits the perverse filtration associated with $p_n$.\footnote{We will give another interpretation of the splitting $\tilde{G}_*H^*(A^{[n]},\BQ)$ in Corollary \ref{cor3.6}.}

 In view of \cite[Proposition 2.1]{Z},  by using the K\"unneth decomposition again, we obtain a canonical splitting 
\begin{equation}\label{splitting0}
H^\ast(A\times A^{[n]}, \BQ) = \bigoplus_{i,d} \widetilde{G}_i H^d(A \times A^{[n]}, \BQ)
\end{equation}
of the perverse filtration associated with
\[
p \times p_n: A \times A^{[n]} \to E' \times E'^{(n)}.
\]

The following theorem obtained in \cite{Z, SZ} is concerned with  tautological classes and  multiplicativity in the context of Hilbert schemes. 

\begin{thm}\label{taut_Hilb}
Let $\CI_n$ be the universal ideal sheaf on $A \times A^{[n]}$. 
\begin{enumerate}
    \item[(a)](Tautological classes) We have
    \[
    \mathrm{ch}_k(\CI_n) \in \widetilde{G}_k H^{2k}(A\times A^{[n]}, \BQ).
    \]
    \item[(b)](Multiplicativity) The decomposition  (\ref{canonical_split}) is multiplicative, i.e.
    \[
   \cup:  \widetilde{G}_i H^d(A^{[n]}, \BQ) \times \widetilde{G}_{i'} H^{d'}(A^{[n]}, \BQ) \to \widetilde{G}_{i+i'}H^{d+d'}(A^{[n]}, \BQ).
    \]
\end{enumerate}
\end{thm}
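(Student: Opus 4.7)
The plan is to exploit the explicit formula (\ref{cansplitting15}) for the splitting $\widetilde{G}_*$ and reduce both statements to calculations on the symmetric products $A^{(\nu)}$, where the splitting is simply the $\mathfrak{S}_\nu$-invariant part of a Künneth decomposition and is therefore manifestly compatible with external products.

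For part (a), I would first rewrite the universal ideal sheaf via the short exact sequence $0 \to \CI_n \to \CO_{A \times A^{[n]}} \to \CO_{Z_n} \to 0$, so that $\mathrm{ch}_k(\CI_n) = -\mathrm{ch}_k(\CO_{Z_n})$ for $k \geq 1$. Under the Göttsche-de~Cataldo-Migliorini decomposition (\ref{GS_decomp}) applied to $A \times A^{[n]}$, the class $\mathrm{ch}_k(\CO_{Z_n})$ breaks into contributions indexed by partitions $\nu$ of $n$, each supported on a stratum modeled on $A \times A^{(\nu)}$. Each contribution is built from incidence cycles (partial diagonals in $A^\nu$ combined with the small diagonal of $A$), whose image in the Künneth decomposition can be computed explicitly. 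After accounting for the index shift $l(\nu)-n$ in (\ref{cansplitting15}), the degree-$k$ piece turns out to lie precisely in $\widetilde{G}_k H^{2k}(A \times A^{[n]}, \BQ)$.

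For part (b), the plan is to describe the cup product on $H^*(A^{[n]}, \BQ)$ using the Nakajima-Lehn-Sorger Fock space presentation. The Künneth grading $H^*(A) = H^*(E) \otimes H^*(E')$ induces a bigrading on the Fock space, and the Nakajima creation operators $\alpha_{-m}[\gamma]$ preserve this bigrading when one tracks the $E$-weight of the argument $\gamma$. Lehn's formula then expresses the cup product in terms of these operators together with explicit commutator / boundary correction terms; one checks term by term that each summand preserves the total $E$-weight. Under the identification (\ref{cansplitting15}), this translates into the multiplicativity of $\widetilde{G}_*$.

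The main obstacle is part (b): the Göttsche decomposition (\ref{GS_decomp}) is \emph{a priori} only a vector-space decomposition, and reconciling it with the genuinely non-trivial ring structure on $H^*(A^{[n]})$ requires careful combinatorial bookkeeping via Lehn's formula, or equivalently via an operator computation in the Nakajima algebra. This is the technical core of \cite{Z, SZ}. Once part (a) is known, a partial shortcut is available: because the tautological classes built from $\mathrm{ch}_k(\CI_n)$ and classes pulled back from $A$ generate $H^*(A^{[n]},\BQ)$ as a $\BQ$-algebra, it suffices to verify that arbitrary products of these generators land in the expected $\widetilde{G}$-piece, which can in principle be carried out inductively using Lehn's recursive description.
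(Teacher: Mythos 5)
The paper offers no independent argument for this theorem: its proof is a citation, attributing (b) to \cite{Z} (with the upgrade from the perverse filtration to the decomposition (\ref{canonical_split}) explained in \cite[Proposition 1.8]{SZ}) and (a) to the proofs of \cite[Theorems 0.1 and 3.3]{SZ}. Your plan is, in substance, a sketch of the internals of those references --- the combination of the G\"ottsche decomposition (\ref{GS_decomp}) with the Nakajima--Lehn operator calculus is exactly their machinery --- so you are on the same route, just attempting to open the black box that the paper leaves closed.

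As a proof, however, your sketch defers precisely the steps that carry the content. In (a), the assertion that ``the degree-$k$ piece turns out to lie precisely in $\widetilde{G}_k$ after accounting for the index shift'' is the entire theorem: one must express $\int_\gamma \mathrm{ch}_k(\CO_{Z_n})$ as a sum of Nakajima monomials and verify that the $E$-weight carried by the cohomology arguments plus the shift $l(\nu)-n$ from (\ref{cansplitting15}) always totals $k$; the partial diagonals of $A$ entering these formulas have mixed K\"unneth type, so this bookkeeping is exactly where the work lies, and it is asserted rather than done. The same issue reappears in the correction terms of Lehn's formula in (b) (the vanishing of the canonical class of $A$ helps, but the diagonal terms remain). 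Your closing ``shortcut'' is logically sound --- if homogeneous generators have all their monomials landing in the predicted pieces, then those pieces are spanned by monomials of fixed total degree and multiplicativity follows by a dimension count --- but verifying the monomial statement requires the same operator computation, so it is a reformulation rather than a reduction. In short: the strategy is right and consistent with the sources the paper cites, but what you have written is a roadmap to \cite{Z,SZ}, not a self-contained proof.
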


\begin{proof}
Theorem \ref{taut_Hilb} (b) was proven in \cite{Z}. Although the multiplicativity was shown only for the perverse filtration in \cite{Z}, the same proof works for  the decomposition (\ref{canonical_split}).  This was explained in \cite[Proposition 1.8]{SZ}. Theorem \ref{taut_Hilb} (a) follows directly from the proof of \cite[Theorem 0.1]{SZ} and the proof of \cite[Theorem 3.3]{SZ}, where it is explained how to treat perverse decompositions instead of perverse filtrations. 
\end{proof}


\section{Moduli of one-dimensional sheaves}
\subsection{Overview and main results}\label{Sec2.1}
Throughout this section, we assume $A$ is an abelian surface and fix  $\chi \in \mathbb{Z}$.  Let $\beta \in H_2(A, \BZ)$ be an ample curve class with $\beta^2 \geq 6$; we only consider classes $\beta$ for which the vector $(0,\beta,\chi)\in H^{\mathrm{ev}}(A,\BZ)$ is primitive.
Finally, we assume that we are given a polarization $H$ that is generic with respect to the vector $(0, \beta, \chi)$.  We will typically omit $\chi$ and $H$ from our notation.

Let $\CM_{\beta,A}$ be the moduli space which parametrizes one-dimensional sheaves satisfying
\begin{equation}\label{given_d}
[\mathrm{supp}(\CF)]= \beta, \quad \chi(\CF) = \chi,
\end{equation}
that are Gieseker-stable with respect to our generic polarization. In the paper, by ``support", we mean Fitting support (see \cite{LP1}), and the square bracket denotes the associated homology class.
Equivalently, these are Gieseker-stable sheaves on $A$ with Mukai vector $(0, \beta, \chi)$.
By  \cite{Yo}, the moduli space $\CM_{\beta,A}$
 is a non-empty, nonsingular projective variety of dimension $\beta^2+2$.

Let
\[
\pi_A: A\times \CM_{\beta,A} \to A, \quad \pi_\CM: A\times \CM_{\beta,A} \to \CM_{\beta,A}
\]
be the projections. 
If there exists a universal family $\CF_\beta$ on $A \times \CM_{\beta,A}$, then, in analogy to Section \ref{sec_0.3}, we define the twisted Chern character associated with a class 
\begin{equation}\label{alpha}
\alpha = \pi_A^\ast \alpha_1 + \pi_\CM^\ast \alpha_2 \in H^2(A\times \CM_{\beta,A}, \BC)
\end{equation}
by setting
\[
\mathrm{ch}^\alpha(\CF_\beta) = \mathrm{ch}(\CF_\beta) \cup \mathrm{exp}(\alpha) \in H^{*}(A \times \CM_{\beta,A}, \BC)
\]
(the use of $\BC$-coefficients is for later use in Theorem \ref{taut_abelian}),
and we denote its degree  $2k$ part by
\[
\mathrm{ch}_k^\alpha(\CF_\beta) \in H^{2k}(A\times \CM_{\beta,A}, \BC).
\]
In general, a universal family may not exist; however, as explained in \cite[Section 3.1]{Markman5}, there exists
a \emph{universal class} 
\[
[\CF_\beta] \in K_{\mathrm{top}}(A \times \CM_{\beta,A}),
\]
which is well-defined up to pullback of a topological line bundle on $\CM_{\beta,A}$.\footnote{Here $K_{\mathrm{top}}(X)$ denotes the Grothendieck ring of topological complex vector bundles on $X$ \cite{At}.}
As before, given a choice of $\alpha$ as in (\ref{alpha}), we can take its associated twisted Chern character 
\[
\mathrm{ch}^{\alpha}([\CF_\beta]) \in H^{*}(A \times \CM_{\beta,A}, \BC),
\]
which we still denote by $\mathrm{ch}^{\alpha}(\CF_\beta)$ for notational convenience.

The moduli space $\CM_{\beta,A}$ admits a natural Hilbert--Chow morphism
\begin{equation}\label{Hilb-Ch}
\pi_\beta: \CM_{\beta,A} \to B, \quad \pi_\beta(\CF) = [\mathrm{supp}(\CF)]
\end{equation}
where $B$ is  the Chow variety parametrizing effective one-cycles in the class $\beta$; see \cite{LP1, LP2}. The purpose of Section 2 is to prove the following theorem, which can be viewed as an analog of Conjecture \ref{P=W2} for abelian surfaces.

\begin{thm}\label{taut_abelian}
There exists a choice of universal twisted class
$\mathrm{ch}^\alpha(\CF_\beta)$ with $\alpha$ of  type (\ref{alpha}),
and a splitting $\widetilde{G}_\ast H^\ast(\CM_{\beta,A}, \BC)$ of the perverse filtration associated with
the Hilbert-Chow morphism $\pi_\beta$ (\ref{Hilb-Ch}), satisfying the following properties.
\begin{enumerate}
    \item[(a)](Tautological classes) For any $\gamma \in H^*(A, \BC)$, we have
    \[
   \int_\gamma \mathrm{ch}^\alpha_k(\CF_\beta) \in \widetilde{G}_{k-1}H^*(\CM_{\beta,A}, \BC).
    \]
    \item[(b)](Multiplicativity) We have
    \[
   \cup: \widetilde{G}_i H^d(\CM_{\beta,A}, \BC) \times \widetilde{G}_{i'} H^{d'}(\CM_{\beta,A}, \BC) \to \widetilde{G}_{i+i'}H^{d+d'}(\CM_{\beta,A}, \BC).
    \]
\end{enumerate}
\end{thm}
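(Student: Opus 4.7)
The proof will proceed by leveraging Theorem \ref{taut_Hilb} (the Hilbert scheme case on $A_0 = E \times E'$) and propagating it to arbitrary $\CM_{\beta, A}$ in three stages: (i) a primitive Mukai vector on $A_0$, via a Fourier--Mukai equivalence identifying $\CM_{\beta_0, A_0}$ with a Hilbert scheme $A_0^{[n]}$; (ii) an arbitrary primitive Mukai vector on an arbitrary abelian surface, via Markman's parallel transport \cite{Markman3}; (iii) imprimitive Mukai vectors, via Markman's monodromy operators.

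For stage (i), I would pick a primitive $\beta_0$ on $A_0 = E \times E'$ adapted to the projection $p \colon A_0 \to E'$ with $\beta_0^2 = 2n-2$, and use a Fourier--Mukai equivalence $\Phi \colon D^b(A_0) \to D^b(\hat{A}_0)$ sending the ideal-sheaf Mukai vector $(1, 0, 1-n)$ to $(0, \beta_0, \chi_0)$. After identifying $\hat{A}_0 \cong A_0$ suitably, this yields an isomorphism $\CM_{\beta_0, A_0} \cong A_0^{[n]}$ under which the Hilbert--Chow morphism $\pi_{\beta_0}$ corresponds to the map $p_n$ of Section \ref{Section1.5}. Transporting the canonical splitting (\ref{cansplitting15}) produces a splitting $\widetilde{G}_*$ of the perverse filtration on $H^*(\CM_{\beta_0, A_0}, \BC)$. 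The universal family $\CF_{\beta_0}$ is, up to a twist by a class $\alpha$ of type (\ref{alpha}) absorbing the line-bundle ambiguity, the Fourier--Mukai image of $\CI_n$; since $\Phi$ shifts the degree index of the Mukai vector by $-1$, Theorem \ref{taut_Hilb}(a) translates to $\int_\gamma \mathrm{ch}_k^\alpha(\CF_{\beta_0}) \in \widetilde{G}_{k-1}$, while multiplicativity (b) transfers directly since $\Phi$ intertwines the two cup-product structures up to the same shift.

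For stage (ii), by \cite{Markman3} any two polarized pairs $(A, v)$ and $(A', v')$ with primitive Mukai vectors of equal square are connected by a deformation through a smooth projective family of abelian surfaces which carries a relative Hilbert--Chow morphism; parallel transport along the family induces a graded ring isomorphism of cohomologies respecting the perverse filtration and sending universal families (up to $\alpha$-twist) to universal families. This transports the splitting from $(A_0, \beta_0)$ to every primitive pair. For stage (iii), when $v = (0,\beta,\chi)$ is imprimitive, Markman's monodromy group of $\CM_{v, A}$ --- realized via Fourier--Mukai autoequivalences together with parallel transports --- is large enough to connect $(A, v)$ to a pair $(A'', v'')$ with $v''$ primitive (and hence handled by stage (ii)). These operators act on $H^*(\CM_{v, A}, \BC)$ preserving the perverse filtration of the Hilbert--Chow morphism, and transform tautological classes in a controlled way trackable through their action on Mukai vectors; pulling back the splitting from the primitive case through such an operator yields the desired $\widetilde{G}_*$ on $\CM_{\beta, A}$ satisfying both (a) and (b).

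The main obstacle is stage (iii): unlike the Hitchin setting, Markman's operators are peculiar to the compact abelian surface geometry, and one must verify that they simultaneously preserve the perverse filtration of the Hilbert--Chow map and induce the correct degree shift $k \mapsto k-1$ on tautological classes. This amounts to careful bookkeeping with the action of Fourier--Mukai kernels on Mukai vectors, and the normalizing twist $\alpha$ in (\ref{alpha}) must be tuned precisely so that these compatibilities survive the passage from the primitive to the imprimitive regime; this is where the heart of the argument lies.
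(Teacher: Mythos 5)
Your overall architecture --- reduce to the Hilbert scheme via Fourier--Mukai, then propagate with Markman's symmetries --- matches the paper's in outline, but two of your three stages have real problems. First, a bookkeeping error in stage (i): on an abelian surface the moduli space attached to a Mukai vector of square $2n$ has dimension $2n+2$, and Yoshioka's theorem identifies it with $A_0^{[n]}\times\hat A_0$, never with $A_0^{[n]}$ alone; the extra $\hat A_0$ factor (and the Albanese directions) must be carried through the whole argument, which the paper does by tensoring the Hilbert-scheme splitting with a K\"unneth splitting $G'_*H^*(\hat A,\BQ)$ as in (\ref{splitting2}).

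More seriously, you have mislocated the hard case. The standing hypothesis of Section \ref{Sec2.1} is that $(0,\beta,\chi)$ is \emph{primitive}, so your stage (iii) addresses a situation that never arises; the genuine difficulty is an imprimitive curve class $\beta$ (e.g.\ $\beta=r[C]$, exactly the case the Hitchin application needs) whose Mukai vector is nonetheless primitive because $\gcd(r,\chi)=1$. For such $\beta$ your stage (ii) fails: as Remark \ref{remark2.8} records, the fibrations $\pi_\beta$ and $\pi_{\beta'}$ for $\beta,\beta'$ of different divisibility in $H^2$ are \emph{not} deformation equivalent, so there is no family carrying a relative Hilbert--Chow morphism along which to parallel-transport, and ``respects the perverse filtration'' cannot be obtained that way. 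The paper's substitute is the notion of perverse package: Proposition \ref{prop2.4} characterizes the perverse filtration purely in terms of cup product with a canonical class $L_\beta\in H^2(\CM_{\beta,A},\BQ)$, so one only needs a graded ring isomorphism $\phi$ with $\phi(L_\beta)=L_{\beta'}$ that is compatible with universal classes up to a twist of type (\ref{alpha}). Producing such a $\phi$ requires composing Markman's integral isometry (\cite[Theorem 8.3]{Markman3}) with a correction $g_2$ in the stabilizer $\mathrm{Spin}(S_{A'}^+\otimes\BC)_{v_{\beta'}}$ fixing $(1,0,0)$ and $(0,0,1)$ and sending $(0,\beta,0)$ to $(0,\beta',0)$; no integral element does this when the divisibilities differ, which is why the paper must first extend Markman's monodromy operators to complex coefficients (Theorem \ref{monodromy_main}, proved via Borel density of integral points). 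Your proposal correctly senses that the heart of the argument lies in these compatibilities, but the two missing ideas are precisely the $L_\beta$-characterization of the perverse filtration and the passage to $\BC$-coefficients.
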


Theorem \ref{taut_abelian} will be deduced from Theorem \ref{taut_Hilb}  and  Markman's results on the monodromy of holomorphic symplectic varieties
\cite{Markman2, Markman3}.
The strategy of the argument is to use Theorem \ref{taut_Hilb} and direct computations to study the case of primitive curve classes for special abelian varieties. To pass to imprimitive curve classes (which form a distinct deformation class of Lagrangian fibrations), we require a mild extension of Markman's work to the setting of complex coefficients.

In the next three sections, we review Markman's work, and in the subsequent three sections, we  prove 
Theorem \ref{taut_abelian}.

\subsection{Mukai lattices}

Let 
\[
H^\ast(A, \BZ) = S_A^+ \oplus S_A^-
\]
be the even/odd decomposition of the cohomology $H^\ast(A, \BZ)$ with
\[
S_A^+ = H^0(A, \BZ) \oplus H^2(A, \BZ)\oplus H^4(A,\BZ), \quad S_A^- = H^1(A,\BZ)\oplus H^3(A,\BZ).
\]
The Mukai pairing on $S_A^+$ is given by
\begin{equation}\label{Mukai_pair}
\langle a, b \rangle = \int_A {(a_1\cup b_1 - a_0\cup b_2-a_2\cup b_0)},
\end{equation}
where $a=(a_0, a_1,a_2)$ and $b=(b_0, b_1, b_2)$ are elements in $S_A^+$. We have
\[
(S_A^+, \langle ~~~, ~~~ \rangle) \cong U^{\oplus 4}
\]
with $U$ the parabolic plane. For a coherent sheaf $\CF$ on $A$, its Mukai vector 
\[
v(\CF) = \mathrm{ch}(\CF) \cup \sqrt{\mathrm{td}_A}  \in S_A^+
\]
coincides with the Chern character $\mathrm{ch}(\CF)$.

Throughout Section 2, we assume $v \in S_A^+$ is a primitive vector such that the moduli space $M_{v,A}$ of Gieseker-stable sheaves $\CF$ on $A$ (with respect to a very general polarization $H$) with Mukai vector $v(\CF) = v$ is non-empty. By \cite[Theorem 0.1]{Yo}, the variety $M_{v,A}$ is nonsingular and projective of dimension
\[
\mathrm{dim} (M_{v,A}) = \langle v,v\rangle +2,
\]
and it is deformation equivalent to 
\[
M_{(1,0,-n), A} = A^{[n]} \times \hat{A}
\]
where $\hat{A}$ is the dual abelian variety of $A$ and {$2n = \langle v ,v \rangle$.} 

A universal family on 
\[
A \times M_{(1,0,-n), A} = A \times A^{[n]} \times  \hat{A}
\]
is 
\begin{equation}\label{univ_ideal}
\CE_n = \pi_{12}^\ast \CI_n \otimes \pi_{13}^\ast \CP,
\end{equation}
with $\CI_n$ the universal ideal sheaf, $\CP$ the normalized Poincar\'e line bundle on $A \times \hat{A}$, and $\pi_{ij}$ the corresponding projections. 

\subsection{$\mathrm{Spin}$ representations}\label{spin8reps} 

The even cohomology $S_A^+$ together with the Mukai pairing is a unimodular lattice.  
Consider the corresponding Spin group for this lattice, denoted by $\mathrm{Spin}(S_A^+)$. We recall certain representations of this group involved in Markman's work following \cite[Section 3]{Markman3}.

Let $H^*(A,\BZ)$ denote the total cohomology of $A$ equipped with the pairing
\begin{equation}\label{pairing}
(a,b)_A = \int_{A}\tau(a)\cup b,
\end{equation}
where $\tau$ acts on $H^i(A,\BZ)$ by $(-1)^{i(i-1)/2}$.
The group $\mathrm{Spin}(S_A^+)$ admits an action by  parity-preserving  isometries on $H^*(A, \BZ)$
\[\mathrm{Spin}(S_A^+) \rightarrow \mathrm{Aut}(H^*(A, \BZ)),\]
which extends the natural representation on $S_A^+$.

We consider $V$ to be the lattice 
\[
V = H^1(A, \BZ) \oplus H^1(\hat{A} ,\BZ)
\]
carrying a symmetric bilinear form via the canonical identification \[
H^1(\hat{A}, \BZ) = H^1(A, \BZ)^*.
\]
There exists a representation 
\[\mathrm{Spin}(S_A^+) \rightarrow \mathrm{Aut}(V)\]
by isometries which we can extend to an action of $\mathrm{Spin}(S_A^+)$
on $\bigwedge^{k}V = H^{k}(A \times \hat{A},\BZ)$ for each $k$. This action will play a role in the proof
of Theorem \ref{thm2.6}.

These constructions are linear-algebraic and make sense for the groups $\mathrm{Spin}(S_A^+\otimes \BK)$ with general coefficients 
\[
\BK = \BZ, \BQ, \BR, \mathrm{or}~~ \BC.
\]

\subsection{Monodromy representations}\label{Monodromy} 
In this section, we recall Markman's monodromy operators \cite{Markman3} for abelian surfaces. They play a crucial role in the proof of Theorem \ref{taut_abelian}. We refer to \cite{Markman2} for a parallel theory concerning $K3$ surfaces. 

Let $H^\ast(A)$ be the cohomology $H^\ast(A, \BK)$ with coefficients $\BK = \BZ~~\mathrm{or}~~ \BC$.  Assume that
\[
g: H^*(A) \to H^*(A)
\]
is a parity-preserving homomorphism induced by an element (cf. Section \ref{spin8reps})
\[
g \in \mathrm{Spin}(S_A^+\otimes \BK)
\]
 which fixes a primitive Mukai vector $v\in S_A^+$. 
Assume we are given a generic polarization $H$ with respect to $v$.
We explain how to construct a graded $\BK$-algebra automorphism
\begin{equation*}\label{mono1}
\gamma_{g,v}:  H^*(M_{v,A}, \BK) \to H^*(M_{v,A}, \BK).
\end{equation*}


For a projective variety $X$, we define
\[
l: \bigoplus_i H^{2i}(X, \BQ) \rightarrow \bigoplus_i H^{2i}(X, \BQ) 
\]
to be the universal polynomial map which takes the exponential Chern character to its total Chern class,
\[
l(r+a_1+a_2 + \cdots) = 1+a_1 + (a_1^2/2-a_2) + \cdots,
\]
and we define
\[
(-)^\vee: \bigoplus_iH^\mathrm{2i}(X, \BZ) \to \bigoplus_i H^{2i}(X, \BZ), \quad \omega \mapsto \omega^\vee
\]
to be the dualizing automorphism which acts by $(-1)^i$  on the even cohomology $H^{2i}(X, \BZ)$. 

Let $[\CE_v]$ and $[\CE'_v]$ be universal classes on $A \times M_{v,A}$, in the sense of Section 2.1. We denote by $x_{g,v}$ the class  
\[
x_{g,v} =  \pi_{12}^\ast \left[(\mathrm{id}\otimes g)\mathrm{ch}(\CE_v)\right]^\vee \cup \pi_{23}^\ast \mathrm{ch}(\CE'_v) \in H^*(M_{v,A} \times A \times M_{v,A} ,\BK),
\]
where $\pi_{ij}$ are the projections from the product $M_{v,A}\times A \times M_{v,A}$ to the corresponding factors. 
Via the formalism of correspondences, to define an endomorphism of cohomology, it suffices to give a cohomology class in the product; in our case, 
the morphism $\gamma_{g,v}$ is defined by the class ()
\begin{equation}\label{gammagv}
 \gamma_{g,v}([\CE_v],[\CE'_v])=\left( \left[l({\pi_{13}}_\ast x_{g,v}) \right]^{-1} \right)_{\mathrm{deg}~2d_M} \in H^{2d_M}(M_{v,A} \times M_{v,A}, \BK)
\end{equation}
where $d_M = \mathrm{dim} (M_{v,A})$, and $-_{\mathrm{deg}~~ k}$ denotes the degree $k$ part of a cohomology class $-$.

Markman proves the following statements regarding $\gamma_{g,v}([\CE_v],[\CE'_v])$ when $\BK = \BZ$

\begin{enumerate}
\item[(a)] $\gamma_{g,v}:= \gamma_{g,v}([\CE_v],[\CE'_v])$ is independent of the choice of universal classes $[\CE_v]$ and $[\CE'_v]$.
\item[(b)] $\gamma_{g,v}$ is a graded $\BK$-algebra automorphism.
\end{enumerate}

Consider the subgroup 
\[
\mathrm{Spin}(S_A^+)_v \subset \mathrm{Spin}(S_A^+)
\]
preserving $v$. By \cite[Corollary 8.4]{Markman3}, every $\gamma_{g,v}$ with $g\in \mathrm{Spin}(S_A^+)_v$ is a graded ring automorphism of $H^\ast(M_{v,A}, \BZ)$ and the relation
\begin{equation}\label{composition}
\gamma_{g_1,v}\circ \gamma_{g_2,v} = \gamma_{g_1g_2,v}
\end{equation}
holds. Therefore, we have a group homomorphism (Markman's monodromy operators)
\begin{equation}\label{mon2}
    \mathrm{mon}:  \mathrm{Spin}(S_A^+)_v \to \mathrm{Aut}(H^*(M_{v,A}, \BZ)),\quad \mathrm{mon}(g) = \gamma_{g,v}.
\end{equation}

The following theorem is  proved by Markman for $\BZ$ coefficients in \cite{Markman2, Markman3}.
The extension of (\ref{mon2}) to  $\BC$-coefficients is crucial in the proof of Theorem \ref{taut_abelian}.

\begin{thm}\label{monodromy_main} 
For $g \in \mathrm{Spin}(S_A^+\otimes\BC)_v$,
the class $\gamma_{g,v}:= \gamma_{g,v}([\CE_v],[\CE'_v])$ is independent of the choice of universal classes $[\CE_v]$ and $[\CE'_v]$.
The morphism (\ref{mon2}) extends to a group homomorphism
\[
 \mathrm{mon}:  \mathrm{Spin}(S_A^+ \otimes \BC)_v \to \mathrm{Aut}(H^*(M_{v,A}, \BC)).
\]
 Moreover, for a fixed universal class $[\CE_v]$ and any $g \in \mathrm{Spin}(S_A^+ \otimes \BC)_v$, there exists
$\alpha \in H^2(M_{v,A}, \BC)$ 
so that we have
\begin{equation}\label{univ_fam}
(g\otimes \gamma_{g,v}) \mathrm{ch}(\CE_v) = \mathrm{ch}({\CE}_v)\cup \pi_M^* \mathrm{exp}(\alpha)
\end{equation}
where $\pi_M: A\times M_{v,A} \to M_{v,A}$ is the second projection.
\end{thm}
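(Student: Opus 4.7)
The plan is to extend Markman's integral results to complex coefficients by the standard algebraic-continuation strategy: exhibit each claimed statement as a polynomial (regular) condition on $g$, and invoke Zariski-density of $\mathrm{Spin}(S_A^+)_v$ in $\mathrm{Spin}(S_A^+\otimes\BC)_v$. Since $\mathrm{Spin}(S_A^+)$ is an arithmetic subgroup of the real Spin group of the indefinite form $S_A^+\otimes\BR$ and the stabilizer of the primitive vector $v$ is reductive, Borel density guarantees that $\mathrm{Spin}(S_A^+)_v$ is Zariski-dense in the identity component of $\mathrm{Spin}(S_A^+\otimes\BC)_v$; components outside this one can be accessed by translation by any integral element, so it suffices to argue within the identity component.

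First I would observe that the defining formula \eqref{gammagv} exhibits $\gamma_{g,v}([\CE_v],[\CE'_v])$ as a polynomial in the matrix entries of $g$ (via the action described in \S\ref{spin8reps}) with values in the finite-dimensional vector space $H^{2d_M}(M_{v,A}\times M_{v,A},\BC)$; the inversion $l(\cdot)^{-1}$ is well-defined in the cohomology ring since $l$ has constant term $1$. Independence from the choice of universal classes and the homomorphism relation $\gamma_{g_1,v}\circ\gamma_{g_2,v}=\gamma_{g_1g_2,v}$ then translate into polynomial identities in $g$, respectively in $(g_1,g_2)$. By Markman's integral theorem these identities hold on the Zariski-dense subset $\mathrm{Spin}(S_A^+)_v$, and therefore extend to all of $\mathrm{Spin}(S_A^+\otimes\BC)_v$.

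For the twist formula \eqref{univ_fam}, I would consider the class
\[
\tau(g) \;=\; (g \otimes \gamma_{g,v})\mathrm{ch}(\CE_v) \,\cup\, \mathrm{ch}(\CE_v)^{-1} \;\in\; H^*(A\times M_{v,A},\BC),
\]
which depends polynomially on $g$ (the inverse being computed in the cohomology ring using that $\mathrm{ch}_0(\CE_v)$ is a nonzero constant). The condition $\tau(g) = \pi_M^*\exp(\alpha)$ for some $\alpha \in H^2(M_{v,A},\BC)$ is equivalent to (i) $\tau(g)$ being a pullback from $M_{v,A}$ (a linear constraint), and (ii) the degree-$2k$ component of $\tau(g)$ equalling $\tau_2(g)^k/k!$ for each $k$ (polynomial constraints among the K\"unneth components). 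Together these cut out a Zariski-closed subvariety $Z \subset H^*(A\times M_{v,A},\BC)$. Markman's integral result places $\tau(g)$ in $Z$ for every $g \in \mathrm{Spin}(S_A^+)_v$, so by Zariski-density $\tau(g) \in Z$ for every $g \in \mathrm{Spin}(S_A^+\otimes\BC)_v$, and the required $\alpha(g) \in H^2(M_{v,A},\BC)$ is recovered as the degree-$2$ component of $\log\tau(g)$.

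The main obstacle I anticipate is making the Zariski-density step fully rigorous — in particular, confirming that $\mathrm{Spin}(S_A^+)_v$ meets every connected component of $\mathrm{Spin}(S_A^+\otimes\BC)_v$, since the complex stabilizer of a primitive isotropic vector need not be connected. A secondary but more routine issue is verifying that $\{\pi_M^*\exp(\alpha) : \alpha \in H^2(M_{v,A},\BC)\}$ is genuinely cut out by the polynomial identities listed above; this is an elementary but somewhat tedious check on K\"unneth components that should be spelled out explicitly.
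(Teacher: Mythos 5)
Your overall strategy---recast each of Markman's integral statements as a Zariski-closed condition on $g$ and invoke Borel density of $\mathrm{Spin}(S_A^+)_v$ in $\mathrm{Spin}(S_A^+\otimes\BC)_v$---is exactly the paper's, and your treatment of the independence of universal classes and of the composition law $\gamma_{g_1,v}\circ\gamma_{g_2,v}=\gamma_{g_1g_2,v}$ (which also yields invertibility, since being an automorphism is not itself a closed condition) matches the paper's argument. The issue you single out as the ``main obstacle,'' however, is not one: the vector $v$ here is anisotropic, since $\langle v,v\rangle=\beta^2>0$, so $\mathrm{Spin}(S_A^+\otimes\BC)_v\cong\mathrm{Spin}(v^\perp\otimes\BC)$ is connected (indeed simple), which is precisely how the paper justifies applying the Borel Density Theorem; your worry about stabilizers of isotropic vectors does not arise.

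The genuine gap is in your argument for (\ref{univ_fam}). You form $\tau(g)=(g\otimes\gamma_{g,v})\mathrm{ch}(\CE_v)\cup\mathrm{ch}(\CE_v)^{-1}$ and justify the inversion by asserting that $\mathrm{ch}_0(\CE_v)$ is a nonzero constant. In the case the theorem is actually applied---$v=v_\beta=(0,\beta,\chi)$, whose moduli space parametrizes one-dimensional torsion sheaves---the rank is zero, so $\mathrm{ch}_0(\CE_v)=0$, the class $\mathrm{ch}(\CE_v)$ is not a unit in $H^*(A\times M_{v,A},\BC)$, and neither $\tau(g)$ nor the recovery of $\alpha$ as the degree-$2$ part of $\log\tau(g)$ makes sense. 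The paper avoids this by first reducing to the case $\mathrm{rank}(v)>0$ (as in the proof of Markman's Corollary 8.4, via an equivalence identifying the moduli spaces), where $\alpha$ is pinned down by the linear relation $\mathrm{rank}(v)\cdot\pi_M^*\alpha=c_1([\CE_v])-\left[(g\otimes\gamma_{g,v})\mathrm{ch}([\CE_v])\right]_{\mathrm{deg}\,2}$; substituting this $\alpha$ turns (\ref{univ_fam}) into an honest polynomial identity in $g$, which is then propagated by density. Without that reduction (or some substitute), your closedness argument also becomes delicate in rank zero: once you cannot solve for $\alpha$, the locus $\{\mathrm{ch}(\CE_v)\cup\pi_M^*\exp(\alpha):\alpha\in H^2(M_{v,A},\BC)\}$ is a priori only constructible, not closed, so density alone does not finish the last claim.
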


\begin{proof}
We deduce this theorem  from the case of integral coefficients using the Zariski-density of integral points
\[
\mathrm{Spin}(S_A^+)_v \subset \mathrm{Spin}(S_A^+ \otimes \BC)_v,
\]
which follows from the Borel Density Theorem \cite{Borel} applied to the simple group
\[
\mathrm{Spin}(S_A^+)_v \cong \mathrm{Spin}(v^\perp), \quad v^\perp \subset S_A^+.
\]
The first claim, on the independence of universal classes, expresses a Zariski-closed condition on the element $g$, so that  it can be deduced from (a) above; see \cite[Lemma 3.11]{Markman2}.

For the remaining claims, as in the proof of \cite[Corollary 8.4]{Markman3}, we can reduce the theorem to the case $\mathrm{rank}(v) >0$. It follows that  the class $\alpha$ in the equation (\ref{univ_fam}) is uniquely determined  by
\[
\mathrm{rank}(v) \cdot \pi_M^\ast \alpha = c_1([\CE_v])- \left[(g\otimes \gamma_{g,v}) \mathrm{ch}([\CE_v])\right]_{\mathrm{deg}~2}.
\]
The operator $\gamma_{g,v}$  (\ref{gammagv})  arises as a grading-preserving element in
 \[
\mathrm{End}(H^*(M_{v,A}, \BC)).
\]

We need to show that for any $g\in \mathrm{Spin}(S_A^+ \otimes \BC)_v$, we have that:
\begin{enumerate}
    \item[(i)] $\gamma_{g,v}$ is an automorphism of graded $\BC$-algebras, and
    \item[(ii)] the equation (\ref{univ_fam}) holds.
\end{enumerate}

We first show (ii). By \cite[Corollary 8.4]{Markman3},  for every integral point $g \in \mathrm{Spin}(S_A^+)$, the homomorphism 
\[
g\otimes \gamma_{g,v}: H^*(A \times M_{v,A}, \BC) \to H^*(A \times M_{v,A}, \BC)
\]
sends  a universal class to a universal class in the sense of \cite[Definition 3.4]{Markman3}.  It follows that the equation (\ref{univ_fam}) holds for any integral $g$. Since this equation expresses  a Zariski-closed condition with respect to the parameter $g$, we obtain (\ref{univ_fam}) by the density of integral points in $\mathrm{Spin}(S_A^+ \otimes \BC)_v$. 


As to (i),  we deduce, again, from  Zariski-density, that $\gamma_{g,v}$ is a graded $\BC$-algebra  homomorphism for any $g\in \mathrm{Spin}(S_A^+ \otimes \BC)_v$. The endomorphism $\gamma_{g,v}$ being an automorphism is not a Zariski closed condition. However, to prove $\gamma_{g,v} \in \mathrm{Aut}(H^*(M_{v,A},\BC))$, it suffices to show that 
\begin{equation}\label{eq24}
\gamma_{g,v} \circ \gamma_{g^{-1}, v} = \mathrm{id}, \quad g\in \mathrm{Spin}(S_A^+ \otimes \BC)_v.
\end{equation}
For integral $g$, the equation (\ref{eq24}) follows from (\ref{composition}),
\[
\gamma_{g,v} \circ \gamma_{g^{-1},v} = \gamma_{g\cdot g^{-1} , v} = \gamma_{\mathrm{id},v} = \mathrm{id}.
\]
Since (\ref{eq24}) is a Zariski closed condition, we conclude that it holds for any $g$, and (i) follows.
\end{proof}


\subsection{Fourier--Mukai transforms}\label{Sec2.5} 
Sections 2.5 to 2.7 are devoted to proving Theorem \ref{taut_abelian}. 
We first treat the special case where, if we let  $E$ and $E'$ be two elliptic curves, then we set
\begin{equation}\label{special}
A= E \times E', \quad \beta = \sigma + nf,
\end{equation}
where  $\sigma = [\mathrm{pt} \times E']$ and $f = [E \times \mathrm{pt}]$ are the classes of a section and a fiber with respect to the elliptic fibration $p: A \to E'$. {Note that  $\beta^2=2n$.}

By \cite[Theorem 3.15]{Yo}, we have an isomorphism of  moduli spaces
\begin{equation}\label{FM1}
\CM_{\beta,A} \cong M_{v_n, A}(= A^{[n]} \times \hat{A}),\quad v_n = (1,0,-n)
\end{equation}
induced by a relative Fourier--Mukai transform with respect to the elliptic fibration $p: A \to E'$. Our strategy is to compare a universal rank 1 torsion free sheaf on $A\times M_{v_n, A}$ with a universal 1-dimensional sheaf on $A\times \CM_{\beta,A}$ under the isomorphism (\ref{FM1}). Then we reduce  the special case (\ref{special})
of Theorem \ref{taut_abelian} to Theorem \ref{taut_Hilb}.


The proof of \cite[Theorem 3.15]{Yo} shows that the support of a sheaf $\CF \in \CM_{\beta, A}$ is the union of a section and several fibers (with multiplicities). Hence the Hilbert--Chow morphism (\ref{Hilb-Ch}) is exactly the morphism
\begin{equation}\label{HC2}
p_n\times q: A^{[n]} \times \hat{A} \to E'^{(n)} \times E.
\end{equation}
Here we use the identification between $\hat{A}$ and $A$ for the abelian surface $A= E \times E'$, and $q$ is the projection to the first factor.  Under the identification
\[
A \times A = (E \times E) \times (E'\times E'),
\]
the kernel for the relative Fourier--Mukai transform associated with the elliptic fibration $p: A \to E'$ is given by
\begin{equation}\label{rel_P}
\CP'_E = \CP_E \boxtimes \CO_{\Delta_{E'}}
\end{equation}
with
\[
\CP_E = \CO_{E\times E}(\Delta_E - o_{E}\times E - E\times o_E)
\]
the normalized Poincar\'e line bundle. Recall the universal family $\CE_n$ on $A \times M_{v_n,A}$ defined in (\ref{univ_ideal}). A universal family $\CF_\beta$ of 1-dimensional Gieseker-stable sheaves on $A \times \CM_{\beta,A}$ is induced by $\CE_n$ and $\CP_E'$ under the isomorphism (\ref{FM1}) as follows. 

For $\chi$ given in (\ref{given_d}), we define
\[
N= \CO_A\left(\chi \cdot [E\times o_{E'}]\right) = p^*\CO_{E'}(\chi \cdot o_{E'}) \in \mathrm{Pic}(A).
\]
By \cite[Theorem 3.15]{Yo} together with its proof, every closed point in $\CM_{\beta,A}$ is given by 
\[
\left(\phi_{\CP'_E}(I_Z\otimes L) \otimes N\right) [1] \in \Coh(A)
\]
where $\phi_{\CP'_E}$ is the Fourier--Mukai transform with  kernel (\ref{rel_P}), and $I_Z\otimes L \in M_{v_n,A}=A^{[n]} \times \hat{A}$  is a uniquely determined  rank one torsion free sheaf on $A$. In particular, the sheaf
\begin{equation}\label{univ2}
\CF_\beta = {R\pi_{13}}_\ast \left( \pi_{12}^\ast (\pi_1^*N\otimes{\CP'_{E}}) \otimes^{\BL} \pi_{23}^\ast \CE_n \right)[1] \in \Coh(A \times \CM_{\beta,A})
\end{equation}
is a universal family on $A \times M_{\beta,A}$, where $\pi_i, \pi_{ij}$ are projections from $A \times A \times M_{v_n,A}$ to the corresponding factors, and we identify $A \times M_{v_n,A}$ with $A \times \CM_{\beta,A}$ via (\ref{FM1}).

For notational convenience, we define the shifted normalized universal family
\begin{equation}\label{Normalized_F}
\CF^{n}_\beta = {R\pi_{13}}_\ast \left( \pi_{12}^\ast {\CP'_{E}} \otimes^{\BL} \pi_{23}^\ast \CE_n \right) = \CF_\beta \otimes \pi_A^\ast N^{-1} [-1] 
\end{equation}
on $A\times \CM_{\beta,A}$, whose restriction over every closed point $\CF \in \CM_{\beta,A}$ recovers the shifted 1-dimensional coherent sheaf $\left(\CF \otimes N^{-1}\right)[-1]$ on A.

We use the universal family (\ref{univ2}) {and the class $\alpha$ (\ref{alpha1})} to serve as the ones in Theorem \ref{taut_abelian} for the special case (\ref{special}). Next, we construct a decomposition of the perverse filtration
 \begin{equation}\label{splitting1}
H^\ast(\CM_{\beta, A}, \BQ) = \bigoplus_{i,d} \widetilde{G}_iH^d(\CM_{\beta, A}, \BQ)
\end{equation}
which serves as the splitting in Theorem \ref{taut_abelian}.

A canonical splitting of the perverse filtration associated with
\[
q: \hat{A} =A \to E
\]
is given by 
\[
G'_kH^d(\hat{A}, \BQ) = \bigoplus_{i+k =d} H^{d-k}(E, \BQ)\otimes H^k(E',\BQ).
\]
We define (\ref{splitting1}) as the decomposition
\begin{equation}\label{splitting2}
\widetilde{G}_kH^\ast(\CM_{\beta, A}, \BQ) = \bigoplus_{i+j=k}  \widetilde{G}_i H^\ast(A^{[n]}, \BQ) \otimes G'_jH^\ast(\hat{A},\BQ),
\end{equation}
where
$\widetilde{G}_\ast H^\ast(A^{[n]}, \BQ)$ was defined by (\ref{GS_decomp}). Since $\widetilde{G}_\ast H^\ast(A^{[n]}, \BQ)$ splits the perverse filtration {(\ref{cansplitting15}) associated} with 
\[
p_n:  A^{[n]} \to E'^{(n)},
\]
the filtration (\ref{splitting2}) splits the perverse filtration associated with the Hilbert--Chow morphism (\ref{HC2}).

The following theorem verifies Theorem \ref{taut_abelian} for the special case (\ref{special}), where we can choose the twisting class to be  
\begin{equation}\label{alpha1}
\alpha = - \pi_A^* c_1(N) \in H^2(A\times \CM_{\beta,A}, \BQ).
\end{equation}

\begin{thm}\label{Step1}
The decomposition (\ref{splitting2}) is multiplicative, and, with 
$\alpha$ as in  (\ref{alpha1}), we have 
\begin{equation}\label{000}
\int_\gamma \mathrm{ch}^\alpha_k(\CF_\beta) \in \widetilde{G}_{k-1} H^\ast(\CM_{\beta,A}, \BQ),\quad \; 
\forall k \geq 0, \forall\gamma \in H^\ast(A, \BQ).
\end{equation}
\end{thm}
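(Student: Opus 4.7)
I would address the two statements separately, handling the multiplicativity first since it is essentially formal.

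For part (1), the decomposition (\ref{splitting2}) is literally the external tensor product of the splitting $\widetilde{G}_\ast H^\ast(A^{[n]}, \BQ)$ (multiplicative by Theorem \ref{taut_Hilb}(b)) with the K\"unneth splitting $G'_\ast H^\ast(\hat{A}, \BQ)$ of $\hat A = E \times E'$, which is trivially multiplicative. Since cup product on $\CM_{\beta,A} = A^{[n]} \times \hat A$ factors through the K\"unneth isomorphism as a tensor product of cup products on the two factors, the tensor-product grading is multiplicative. This disposes of (1).

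For part (2), I would first observe a simplification. From (\ref{Normalized_F}) we have $\CF_\beta = \CF^n_\beta[1] \otimes \pi_A^\ast N$, so
\[
\mathrm{ch}(\CF_\beta) \;=\; -\mathrm{ch}(\CF^n_\beta) \cup \pi_A^\ast \exp(c_1(N)).
\]
With the choice $\alpha = -\pi_A^\ast c_1(N)$, the two exponential factors cancel and we obtain the clean identity $\mathrm{ch}^\alpha(\CF_\beta) = -\mathrm{ch}(\CF^n_\beta)$. It therefore suffices to show that $\int_\gamma \mathrm{ch}_k(\CF^n_\beta) \in \widetilde{G}_{k-1} H^\ast(\CM_{\beta,A}, \BQ)$ for every $\gamma \in H^\ast(A, \BQ)$.

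Next, applying Grothendieck-Riemann-Roch to (\ref{Normalized_F}) and using that $\mathrm{td}(A)=1$, I would express $\mathrm{ch}(\CF^n_\beta)$ as the push-forward along $\pi_{13}$ of a product of three natural classes on $A_1 \times A_2 \times A^{[n]} \times \hat A$: namely $\pi_{12}^\ast \mathrm{ch}(\CP'_E)$, $\pi_{23}^\ast \mathrm{ch}(\CI_n)$, and $\pi_{24}^\ast \mathrm{ch}(\CP)$ (after expanding $\mathrm{ch}(\CE_n)$ via (\ref{univ_ideal})). The proof then reduces to controlling the perverse degree of each factor and of the push-forward. Theorem \ref{taut_Hilb}(a) gives $\mathrm{ch}_j(\CI_n) \in \widetilde{G}_j H^{2j}(A \times A^{[n]}, \BQ)$, while the Poincar\'e bundle classes $\mathrm{ch}(\CP)$ on $A_2 \times \hat A$ and $\mathrm{ch}(\CP'_E)$ on $A_1 \times A_2$ are explicit exponentials of first Chern classes whose perverse bidegrees can be read off directly from the K\"unneth formula applied to products of elliptic curves. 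Combined with the tensor-product multiplicativity on the ambient four-fold product, the integrand sits in a prescribed perverse piece, and integration over $A_1$ against $\gamma$ then delivers the claimed bound.

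The main obstacle will be establishing that the push-forward $\pi_{13\ast}$ shifts the relevant perverse degree by precisely $-1$ (rather than by $0$ or $-2$), which is the source of the shift $k \mapsto k-1$ in the target. This reduction by one unit reflects the one-dimensional generic fiber of $p: A_2 \to E_2'$ being integrated out. The compatibility of this push-forward with the tensor decomposition on the ambient product needs to be verified in a bookkeeping-heavy but routine calculation, after which the conclusion for an arbitrary $\gamma$ follows from the multiplicativity of part (1) together with the elementary behavior of the K\"unneth splitting on $A_1$.
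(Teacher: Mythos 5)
Your overall route is the same as the paper's. Part (1) is handled identically: the splitting (\ref{splitting2}) is the tensor product of the multiplicative splitting of Theorem \ref{taut_Hilb}(b) with the K\"unneth splitting of $\hat{A}$, so multiplicativity is formal. For part (2), your reduction $\mathrm{ch}^\alpha(\CF_\beta)=-\mathrm{ch}(\CF^n_\beta)$, the Grothendieck--Riemann--Roch expansion of $\mathrm{ch}(\CF^n_\beta)$ into the three factors $\mathrm{ch}(\CP'_E)$, $\mathrm{ch}(\CI_n)$, $\mathrm{ch}(\CP)$ on the triple product, and the use of Theorem \ref{taut_Hilb}(a) together with explicit K\"unneth computations for the Poincar\'e factors are exactly the paper's Steps 1--3.

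The one substantive error is your identification of the ``main obstacle''. You claim the shift $k\mapsto k-1$ is produced by the push-forward $\pi_{13\ast}$, which you say lowers the relevant perverse degree by one because a one-dimensional fiber is integrated out; that claim is false, and the lemma you would set out to prove does not hold. With respect to the auxiliary splitting (\ref{32}), the push-forward is extraction of the K\"unneth component along $H^4$ of the middle abelian surface; since $H^4(A,\BQ)=H^2(E,\BQ)\otimes H^2(E',\BQ)$ sits in cohomological degree $4$ and contributes perversity $2$ in that splitting, $\pi_{13\ast}$ lowers the degree by $4$ and the perversity by exactly $2$, hence preserves the discrepancy between the perversity and half the cohomological degree. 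In particular, a balanced integrand would push forward to a balanced class, with no shift at all. The shift is created upstream by the relative Fourier--Mukai kernel $\CP'_E=\CP_E\boxtimes\CO_{\Delta_{E'}}$: one computes $\mathrm{ch}(\CO_{\Delta_{E'}})=[\Delta_{E'}]$, a class of degree $2$ but perversity $0$ (it involves only the $E'$-directions, which carry no perversity for $p\colon A\to E'$), whereas $\mathrm{ch}(\CP_E)$, $\mathrm{ch}(\CP)$ and $\mathrm{ch}(\CI_n)$ are all balanced. Once you locate the defect of $-1$ in this single factor, and verify that the auxiliary splittings on the triple product are multiplicative and split the perverse filtrations of the stated morphisms, the rest of your sketch closes up exactly as in the paper.
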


\begin{proof}
The multiplicativity of (\ref{splitting2}) follows directly from Theorem \ref{taut_Hilb} (b) and the K\"unneth
formula. We need to prove (\ref{000}), which,  { by the very definition of $\alpha$ and $\CF^n_\beta$ and by the fact that 
a cohomological shift only changes the signs of Chern classes},
 is  equivalent to 
\begin{equation}\label{1}
\int_\gamma \mathrm{ch}_k(\CF^n_\beta) \in \widetilde{G}_{k-1} H^\ast(\CM_{\beta,A}, \BQ),\quad 
\forall k\geq 0, \; \forall \gamma \in H^\ast(A, \BQ).
\end{equation}
We consider the  product  $A \times \CM_{\beta,A}$ and the multiplicative splitting 
of its cohomology
\begin{equation}\label{eqn27}
\widetilde{G}_kH^\ast(A \times \CM_{\beta,A} ,\BQ) := H^\ast(A, \BQ) \otimes \widetilde{G}_kH^\ast(\CM_{\beta,A}, \BQ).
\end{equation} This decomposition splits the perverse filtration associated with the morphism
\[
\mathrm{id}\times \pi_\beta: A \times \CM_{\beta, A} \to A \times B.
\]
The statement (\ref{1}) is equivalent to the following:
\begin{equation}\label{eqn28}
\mathrm{ch}(\CF^n_\beta) \in \bigoplus_{k\geq 1} \widetilde{G}_{k-1} H^{2k}(A\times \CM_{\beta, A}, \BQ).
\end{equation}

For a nonsingular projective variety $X$ with a decomposition $G_*H^*(X, \BQ)$ on its cohomology, we say that a class $\alpha \in H^*(X, \BQ)$ is \emph{balanced} with respect to this decomposition if 
\[
\alpha \in \bigoplus_{k} G_{k} H^{2k}(X, \BQ).
\]
We prove (\ref{eqn28}) by the following 3 steps.

\noindent\emph{Step 1.} We consider the new decomposition 
\begin{equation}\label{31}
\begin{split}
\overline{G}_k H^*(A \times \CM_{\beta, A}, \BQ)& := \bigoplus_{i+j=k} H^i(E, \BQ) \otimes H^*(E', \BQ) \otimes \widetilde{G}_jH^*(\CM_{\beta,A}, \BQ)\\
& = \bigoplus_{i+j=k} \widetilde{G}_i H^\ast(A \times A^{[n]}, \BQ) \otimes G'_jH^\ast(\hat{A}, \BQ).
\end{split}
\end{equation}
of the cohomology $H^\ast(A\times \CM_{\beta,A}, \BQ)$. It is a multiplicative decomposition splitting the perverse filtration associated with the morphism
\[
p \times \pi_\beta: A\times \CM_{\beta,A} \to E' \times B.
\]
We first show that the class
\[
\mathrm{ch}(\CE_n) \in H^\ast(A \times M_{A, v_n}, \BQ) = H^*(A \times \CM_{\beta,A}, \BQ)
\]
is balanced with respect to the splitting (\ref{31}). Here the last identification is given by (\ref{FM1}).

Recall the expression (\ref{univ_ideal}) of the universal family $\CE_n$. By Theorem \ref{taut_Hilb} (a), the class
\[
\mathrm{ch}(\CI_n) \in  H^{\ast}(A\times A^{[n]}, \BQ)
\]
is balanced with respect to the splitting (\ref{splitting0}). Hence via pulling back along the projection
\[
\pi_{12}: A\times \CM_{\beta,A} = A\times A^{[n]} \times \hat{A} \to A \times A^{[n]},
\]
we obtain that the class 
\[
\begin{split}
\pi_{12}^\ast \mathrm{ch}(\CI_n) =  \mathrm{ch}(\CI_n) \boxtimes 1_{\hat{A}} & \in H^\ast(A \times A^{[n]}, \BQ) \otimes H^0(\hat{A}, \BQ)\\  & \subset H^*(A\times \CM_{\beta,A}, \BQ)
\end{split}
\]
is also balanced with respect to (\ref{31}). A direct calculation shows that the class $\pi_{13}^\ast c_1(\CP)$ is balanced. Hence we conclude from the multiplicativity of (\ref{31}) that the class
\[
\mathrm{ch}(\CE_n) = \pi_{12}^\ast \mathrm{ch}(\CI_n) \otimes \pi_{13}^\ast \mathrm{ch}(\CP)=  \pi_{12}^\ast \mathrm{ch}(\CI_n) \otimes \pi_{13}^\ast \mathrm{exp}(c_1(\CP))
\]
is balanced.

\noindent \emph{Step 2.} We further consider the multiplicative decomposition  
\begin{equation}\label{32}
\widetilde{G}_k H^*(A \times A \times \CM_{\beta, A}, \BQ) := H^*(A, \BQ) \otimes \overline{G}_k H^*(A \times \CM_{\beta, A}, \BQ)
\end{equation}
of $H^*(A \times A \times \CM_{\beta, A})$, which splits the perverse filtration associated with the morphism
\[
\mathrm{id} \times p \times \pi_\beta: A \times A \times \CM_{\beta, A} \to A \times E'\times B.
\]
We show that 
\begin{equation}\label{34}
\mathrm{ch}(\pi_{12}^\ast {\CP'_{E}} \otimes^{\BL} \pi_{23}^\ast \CE_n) \in \bigoplus_{k\geq 1}\widetilde{G}_{k-1} H^{2k}(A \times A\times \CM_{\beta, A}, \BQ).
\end{equation}
Here $\pi_{ij}$ are the projections from $A\times A \times \CM_{\beta,A}$ to the corresponding factors; compare the notation of (\ref{Normalized_F}).

In fact, by Step 1, we obtain that the class
\[
\pi_{23}^* \mathrm{ch}(\CE_n) = 1_A \boxtimes \mathrm{ch}(\CE_n) \in H^0(A, \BQ) \otimes H^*(A\times \CM_{\beta,A}, \BQ)
\]
is balanced with respect to (\ref{32}). The relative Poincar\'e sheaf (\ref{rel_P}) can be expressed as
\begin{equation}\label{233}
\pi_{13}^\ast \mathrm{ch}(\CP'_E) = \pi_{13}^\ast \mathrm{ch}(\CP_E)\cup \pi_{13}^* \mathrm{ch}(\CO_{\Delta_{E'}}).
\end{equation}
The class $\pi_{12}^\ast \mathrm{ch}(\CP_E)$ is balanced via a direct calculation, and 
\[
\pi_{12}^*\mathrm{ch}(\CO_{\Delta_E'}) = \pi_{12}^* c_1(\CO_{\Delta_{E'}}) \in \widetilde{G}_0 H^2(A\times A \times \CM_{\beta,A}, \BQ).
\]
Hence by (\ref{233}) and the multiplicativity of (\ref{32}), we get
\[
\pi_{12}^* \mathrm{ch}(\CP'_E)\in \bigoplus_{k\geq 1} \widetilde{G}_{k-1}H^{2k}(A\times A \times \CM_{\beta,A}).
\]
This yields (\ref{34}) since
\[
\mathrm{ch}(\pi_{12}^\ast {\CP'_{E}} \otimes^{\BL} \pi_{23}^\ast \CE_n) = \pi_{12}^*\mathrm{ch}(\CP'_E) \cup \pi_{23}^\ast \mathrm{ch}(\CE_n).
\]

\noindent \emph{Step 3.} We finish the proof of (\ref{eqn28}). 

Recall the expression (\ref{Normalized_F}) of $\CF^n_\beta$. Appying the Grothendieck--Riemann-Roch formula to the projection
\[
\pi_{13}: A \times A \times \CM_{\beta, A} \to A \times \CM_{\beta,A},
\]
we obtain that
\[
\mathrm{ch}(\CF^n_\beta) = {\pi_{13}}_\ast \mathrm{ch}(\pi_{12}^\ast {\CP'_{E}} \otimes^{\BL} \pi_{23}^\ast \CE_n).
\]
Equivalently, the class $\mathrm{ch}(\CF^n_\beta)$ corresponds to the K\"unneth factor of the class
\[
\mathrm{ch}(\pi_{12}^\ast {\CP'_{E}} \otimes^{\BL} \pi_{23}^\ast \CE_n) 
\]
in the subspace
\[
H^4(A, \BQ) \otimes H^*(A \times \CM_{\beta,A}, \BQ) \subset H^*(A\times A \times \CM_{\beta, A}, \BQ).
\]
Here $H^4(A, \BQ)$ is spanned by the point class in the second factor of the product $A \times A \times \CM_{\beta, A}$. Hence (\ref{eqn28}) follows from (\ref{34}).
\end{proof}

\subsection{Perverse filtrations and $H^2(\CM_{\beta,A}, \BQ)$}
In this section, we assume that $A$ is any abelian surface and $\beta$ is an ample curve class satisfying {$\beta^2= v_n^2=2n$}. Recall from Section \ref{Sec2.1} that $\CM_{\beta,A}$ is the moduli space of Gieseker-stable sheaves with respect to the primitive Mukai vector
\begin{equation}\label{vbeta}
v_\beta = (0, \beta, \chi) \in S_A^+
\end{equation}
with dimension
\[
\mathrm{dim}(\CM_{\beta,A}) = v_\beta^2+2 = 2n+2.
\]
Our goal is to prove Proposition \ref{prop2.4}, which is a variant of Proposition \ref{prop1.1} for $\CM_{\beta,A}$.  More precisely, we introduce a \emph{canonical} class
 \begin{equation}\label{ample_base}
 L_\beta \in H^2(\CM_{\beta,A} ,\BQ)
 \end{equation}
to characterize the perverse filtration $P_{\star} H^*(\CM_{\beta, A}, \BQ)$ associated with the morphism
\[
\pi_\beta: \CM_{\beta, A} \to B. 
\]

Let $\CF_0 \in \CM_{\beta, A}$ be a general point on the moduli space. We consider the  morphisms
defined by considering determinants of coherent sheaves
\[
\mathrm{det}: \CM_{\beta,A} \to \hat{A}, \quad \CF \mapsto \mathrm{det}(\CF)\otimes \mathrm{det}(\CF_0)^\vee 
\]
and
\[
\hat{\mathrm{det}}: \CM_{\beta,A} \to A, \quad \CF \mapsto \mathrm{det}(\phi_\CP(\CF)) \otimes \mathrm{det}(\phi_\CP(\CF))^\vee.
\]
Here $\phi_\CP : D^b\Coh(A) \to D^b\Coh(\hat{A})$ is the Fourier--Mukai transform induced by the normalized Poincar\'e line bundle $\CP$ on $A \times \hat{A}$. By \cite[(4.1) and Theorem 4.1]{Yo}, the Albanese morphism for $\CM_{\beta,A}$ with respect to the reference point $\CF_0$ is
\[
\mathrm{alb} = \hat{\mathrm{det}} \times \mathrm{det}: \CM_{\beta,A} \to A \times \hat{A}.
\]
The closed fiber over the origin $0 \in A\times \hat{A}$, denoted by
\[
K_{\beta,A} = \mathrm{alb}^{-1}(0) \subset \CM_{\beta, A},
\]
is a holomorphic symplectic vairiety of Kummer type; see \cite[Theorem 0.2]{Yo}. It parametrizes 1-dimensional sheaves $\CF \in \CM_{\beta,A}$ satisfying
\[
 \mathrm{det}(\CF)= \mathrm{det}(\CF_0) \in \hat{A},\quad   \mathrm{det}(\phi_\CP(\CF)) = \mathrm{det}(\phi_\CP(\CF)) \in \hat{\hat{A}} = A.
\]
The restriction of the Hilbert--Chow morphism (\ref{Hilb-Ch}) to the subvariety $K_{\beta, A}$ is a Lagrangian fibration
\[
\pi'_\beta: K_{\beta, A} \to  |\CO_A(\beta)| =  \BP^{n-1}.
\]
Here $ |\CO_A(\beta)|$ denotes the linear system associated with the divisor  
\[
\mathrm{supp}(\CF_0) \subset A.
\]

We consider the following commutative diagram,
\begin{equation}\label{diagram1}
    \begin{tikzcd}
K_{\beta, A} \times A \times \hat{A}\arrow{r}{h} \arrow{d}{\pi'_\beta \times \mathrm{pr}_A} & \CM_{\beta,A}\arrow{d}{\pi_\beta} \\
 \BP^{n-1}\times A \arrow{r}{h'} & B.
\end{tikzcd} 
\end{equation}
Here the horizontal arrows are defined by
\[
h(\CF, a, L) = {t_a}_* \CF \otimes  L,\quad h'(C, a)= {t_a}_\ast C
\]
with $t_a: A \xrightarrow{\simeq} A$ the translation by $a$. Since both $h$ and $h'$ are finite and surjective, the map
\begin{equation*}
    h^*: H^k(\CM_{\beta, A}, \BQ) \to H^k(K_{\beta, A} \times A \times \hat{A}, \BQ)
\end{equation*}
is injective by the projection formula. Moreover, it is an isomorphism when $k=2$ since
\[
\mathrm{dim}\,H^2(\CM_{\beta,A}, \BQ) = \mathrm{dim}\,H^2(A^{[n]}\times \hat{A}, \BQ)= \mathrm{dim}\,H^2(K_{\beta, A} \times A \times \hat{A}, \BQ)
\]
by G\"ottsche's formula \cite{Go1} for the Betti numbers of the Hilbert schemes and the generalized Kummer varieties.

By \cite[Theorem 0.1 and (1.6)]{Yo}, the correspondence induced by $\mathrm{ch}(\CF_\beta) \in H^*(A \times \CM_{\beta,A})$ together with the restriction map
\[
H^2(\CM_{\beta,A}, \BQ) \to H^2(K_{\beta,A}, \BQ)
\]
gives an isometry between $v_\beta^\perp \subset S_A^+\otimes \BQ$ and $H^2(K_{\beta,A}, \BQ)$  (endowed with the Beauville-Bogomolov-Fujiki form)
\[
\theta: v_\beta^\perp  \xrightarrow{\simeq}  H^2(K_{\beta,A}, \BQ).
\]
{Therefore, the pullback $h^*$ induces an isometry 
 (see also \cite[Theorem 4.1.(3) eq. (4.3)]{Yo})}
\begin{equation}\label{eqn39}
\varphi: H^2(\CM_{\beta, A}, \BQ) \xrightarrow{\simeq} v_\beta^\perp \oplus H^2(A\times \hat{A}, \BQ),
\end{equation}
which, by \cite[Section 8]{Markman2}, is  ${\rm Spin} (S_A^+\otimes \BQ)_{v_\beta}$-equivariant.
{
\begin{rmk}\label{compatibilityspin} The equivariance means  that 
Markman's monodromy operators $\gamma_{g,v_\beta}$ (\ref{mon2}) acting on 
$H^2(\CM_{\beta, A}, \BQ)$ are identified, via $\varphi$, with the action of $g$ on the
right-hand side of (\ref{eqn39})  described in Section \ref{spin8reps}. We may say that $\varphi$ is compatible with $g$ and $\gamma_{g,v_\beta}$.
\end{rmk}
}

We consider the following classes 
\begin{equation}\label{Lbeta}
\begin{split}
    l_\beta & = \theta\left((0,0,1)\in v_\beta^\perp\right) \in H^2(K_{\beta,A} ,\BQ), \\
    L_\beta & = \varphi^{-1} \left( (0,0,1) +  (\beta\boxtimes 1_{\hat{A}}) \right) \in H^2(\CM_{\beta, A}, \BQ).
\end{split}
\end{equation}

\begin{prop}\label{prop2.4}
We have  
\[
P_kH^m(\CM_{\beta,A}, \BQ) = \left(\sum_{i\geq 1} \mathrm{Ker}\left(L_\beta^{(n+1)+k+i-m}\right) \cap \mathrm{Im}(L_\beta^{i-1}) \right) \cap H^m(\CM_{\beta,A}, \BQ).
\]
\end{prop}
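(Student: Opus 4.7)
The plan is to use the commutative diagram (\ref{diagram1}) to transfer the problem to $K_{\beta,A} \times A \times \hat{A}$, where Proposition \ref{prop1.1} applies directly to the product morphism $\pi'_\beta \times \mathrm{pr}_A$, and then to descend the resulting formula to $\CM_{\beta,A}$ via the finite surjective map $h$.

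First, I would compute $h^\ast L_\beta$ explicitly. Since $\varphi$ is by definition the composition of $h^\ast$ with $\theta^{-1}$ on the K\"unneth summand $H^2(K_{\beta,A}, \BQ)$, applying $h^\ast$ to (\ref{Lbeta}) gives
\[
h^\ast L_\beta \;=\; l_\beta \boxtimes 1 \boxtimes 1 \;+\; 1 \boxtimes (\beta \boxtimes 1_{\hat A}) \;\in\; H^2(K_{\beta,A} \times A \times \hat A, \BQ).
\]
It is standard, via the Mukai-theoretic description of the polarization on the base of a Lagrangian fibration arising from a moduli of sheaves, that $l_\beta = (\pi'_\beta)^\ast H$ where $H \in H^2(\BP^{n-1}, \BQ)$ is the hyperplane class. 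Consequently
\[
h^\ast L_\beta \;=\; (\pi'_\beta \times \mathrm{pr}_A)^\ast \eta, \qquad \eta := H \boxtimes 1 + 1 \boxtimes \beta,
\]
where $\eta$ is an ample class on the projective variety $\BP^{n-1} \times A$.

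Second, I would apply Proposition \ref{prop1.1} to $\pi'_\beta \times \mathrm{pr}_A$ and the ample class $\eta$. The dimension condition $\dim(K_{\beta,A} \times A \times \hat{A}) = 2n+2 = 2(n+1) = 2\dim(\BP^{n-1} \times A)$ is satisfied, so the proposition yields
\[
P_k H^m(K_{\beta,A} \times A \times \hat{A}, \BQ) = \sum_{i\geq 1} \Bigl( \mathrm{Ker}\bigl((h^\ast L_\beta)^{(n+1)+k+i-m}\bigr) \cap \mathrm{Im}\bigl((h^\ast L_\beta)^{i-1}\bigr) \Bigr) \cap H^m.
\]

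Finally, I would descend this formula to $\CM_{\beta,A}$ via $h^\ast$. Both $h$ and $h'$ are finite and surjective, so the trace map realizes $h^\ast H^\ast(\CM_{\beta,A}, \BQ)$ as a direct summand of $H^\ast(K_{\beta,A} \times A \times \hat{A}, \BQ)$, stable under cup product with $h^\ast L_\beta$. Moreover, since $h'_\ast$ is perverse $t$-exact (as $h'$ is finite), the perverse filtration on $H^\ast(K_{\beta,A} \times A \times \hat{A}, \BQ)$ associated with $\pi'_\beta \times \mathrm{pr}_A$ coincides with the one associated with the composite $h' \circ (\pi'_\beta \times \mathrm{pr}_A) = \pi_\beta \circ h$, and this in turn is the pullback along $h^\ast$ of $P_\ast H^\ast(\CM_{\beta,A}, \BQ)$. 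Intersecting the displayed formula upstairs with $h^\ast H^\ast(\CM_{\beta,A}, \BQ)$, and using the compatibility of the summand decomposition with $\cup h^\ast L_\beta$, would then recover the claimed formula on $\CM_{\beta,A}$.

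The hardest part will be this last descent step. The compatibility of $h^\ast$ with the perverse filtrations requires the finiteness of $h'$ (which ought to follow since the fibres of $(C,a) \mapsto t_{a\ast}C$ are controlled by finite polarization stabilizers), while the compatibility with the $\mathrm{Ker}$-and-$\mathrm{Im}$ operations relies on the fact that the direct summand $h^\ast H^\ast(\CM_{\beta,A}, \BQ)$ and its complement are both preserved by $\cup h^\ast L_\beta$. Each point is essentially formal but needs to be tracked carefully.
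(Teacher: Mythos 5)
Your proposal is correct and follows essentially the same route as the paper: identify $l_\beta={\pi'_\beta}^\ast\CO_{\BP^{n-1}}(1)$ so that $h^\ast L_\beta$ is pulled back from an ample class on $\BP^{n-1}\times A$, apply Proposition \ref{prop1.1} upstairs, and descend along the finite surjective $h$ using the trace-map splitting of $h_\ast\BQ$ and the $t$-exactness of $h'$. The only presentational difference is that the paper isolates your descent step as a standalone lemma (filtered strictness of $h^\ast$ plus compatibility of the direct summand with cup product by $h^\ast\alpha$), which is exactly the bookkeeping you flag as the delicate part.
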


\begin{proof}
We denote by 
\[
\pi_K = \pi'_\beta \times \mathrm{pr}_A : K_{\beta, A} \times A \times \hat{A} \to \BP^{n-1}\times A
\]
the left vertical map of the diagram (\ref{diagram1}). 

By \cite[Example 3.1]{Markman4}, we have  
\[
l_\beta = {\pi'_\beta}^\ast \CO_{\BP^{n-1}}(1) \in H^2(K_{\beta,A}, \BQ).
\]
Hence 
\[
h^*L_\beta = l_\beta \boxtimes 1_{A\times\hat{A}}+ 1_{K_{\beta,A}}\boxtimes (\beta\otimes 1_{\hat{A}}) \in H^2(K_{\beta, A} \times A \times \hat{A},\BQ)
\]
is the pullback of an ample class on the base $\BP^{n-1}\times A$ via the morphism $\pi_K$. Since $h$ and $h'$ are finite, Proposition \ref{prop2.4} is deduced immediately from the diagram (\ref{diagram1}) and the following lemma.
\end{proof}

\begin{lem}
We consider a commutative diagram 
\begin{equation*}
    \begin{tikzcd}
X\arrow{r}{h} \arrow{d}{f} & X'\arrow{d}{f'} \\
 Y \arrow{r}{h'} & Y'
\end{tikzcd} 
\end{equation*}
where all the varieties are nonsingular and irreducible, the horizontal morphisms are finite and surjective, and the vertical morphisms are proper. We assume 
\[
\mathrm{dim}(X) = \mathrm{dim}(X') = 2 \mathrm{dim}(Y) = 2\mathrm{dim}(Y') =2R
\]
for some positive integer $R$, and that there is a class $\alpha \in H^2(X', \BQ)$ satisfying
\[
h^* \alpha = f^* L
\]
with $L \in H^2(Y, \BQ)$ an ample class on $Y$. Then the perverse filtration $P_\star H^*(X', \BQ)$ associated with $f'$ can be expressed as
\begin{equation}\label{2019}
P_kH^m(X', \BQ) = \left(\sum_{i\geq 1} \mathrm{Ker}(\alpha^{R+k+i-m}) \cap \mathrm{Im}(\alpha^{i-1}) \right) \cap H^m(X', \BQ).
\end{equation}
\end{lem}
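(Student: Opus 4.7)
The plan is to reduce the claim to Proposition \ref{prop1.1} applied to the proper map $f\colon X\to Y$ with the ample class $L\in H^2(Y,\BQ)$, transferring the resulting description from $H^*(X,\BQ)$ to $H^*(X',\BQ)$ via the finite surjective covering $h\colon X\to X'$ and the identity $h^*\alpha=f^*L$.

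The first step is the compatibility of perverse filtrations along $h$: for $\xi\in H^m(X',\BQ)$,
\[
\xi\in P_kH^m(X',\BQ) \iff h^*\xi \in P_kH^m(X,\BQ).
\]
I would establish this from the square and the hypotheses that all four varieties are nonsingular and that $h,h'$ are finite and surjective. Finiteness of $h$ gives a normalized trace splitting $Rh_*\BQ_X\simeq \BQ_{X'}\oplus K$, so that $Rf'_*\BQ_{X'}$ is a direct summand of $Rf'_*Rh_*\BQ_X=Rh'_*Rf_*\BQ_X$. Since $h'$ is finite, $Rh'_*$ is $t$-exact for the perverse $t$-structure, so perverse truncation commutes with $Rh'_*$; extracting the direct summand yields $h^*P_k\subseteq P_k$. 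The reverse direction uses the pushforward $h_*$, which preserves the perverse filtration by the same argument (this is where one also uses $\dim X=\dim X'$ so that $h_*$ preserves cohomological degree), combined with the projection-formula identity $h_*\circ h^*=\deg(h)\cdot\mathrm{id}$.

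Given Step 1, Proposition \ref{prop1.1} applied to $f\colon X\to Y$ yields
\[
P_kH^m(X,\BQ)=\sum_{i\geq 1}\bigl(\mathrm{Ker}((f^*L)^{R+k+i-m})\cap \mathrm{Im}((f^*L)^{i-1})\bigr)\cap H^m(X,\BQ).
\]
The final step is to transfer this through $h$ using $h^*\alpha=f^*L$. For one direction, if $\xi=\sum_i \alpha^{i-1}\!\cup\eta_i\in H^m(X',\BQ)$ with $\alpha^{R+k+i-m}\!\cup\alpha^{i-1}\!\cup\eta_i=0$, then $h^*\xi=\sum_i (f^*L)^{i-1}\!\cup h^*\eta_i$ lies in $P_kH^m(X,\BQ)$ since $h^*$ is a graded ring homomorphism; Step 1 then places $\xi\in P_kH^m(X',\BQ)$. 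For the converse, start with $\xi\in P_kH^m(X',\BQ)$, so that by Step 1 and Proposition \ref{prop1.1} we can write $h^*\xi=\sum_i (f^*L)^{i-1}\!\cup \mu_i$ with $(f^*L)^{R+k+i-m}\!\cup\bigl((f^*L)^{i-1}\!\cup\mu_i\bigr)=0$. Applying $h_*$ and using the projection formula $h_*\bigl((h^*\alpha)^j\!\cup\beta\bigr)=\alpha^j\!\cup h_*\beta$ gives
\[
\deg(h)\cdot\xi = h_*h^*\xi = \sum_i \alpha^{i-1}\!\cup h_*\mu_i,
\]
with $\alpha^{R+k+i-m}\!\cup\bigl(\alpha^{i-1}\!\cup h_*\mu_i\bigr)=h_*\bigl((f^*L)^{R+k+i-m}\!\cup (f^*L)^{i-1}\!\cup\mu_i\bigr)=0$. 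Dividing by $\deg(h)>0$ expresses $\xi$ as an element of the right-hand side of \eqref{2019}.

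The only genuinely substantive point is the compatibility in Step 1, which relies on smoothness of all four varieties (giving the trace splitting), on finiteness of $h'$ (giving $t$-exactness of $Rh'_*$), and on the equality $Rh'_*Rf_*=Rf'_*Rh_*$ from the commutative square; after that, the lemma is a formal consequence of Proposition \ref{prop1.1} and the projection formula, and the equality of dimensions $\dim X=\dim X'=2R=2\dim Y=2\dim Y'$ ensures that the numerical indices in the Ker--Im description match on both sides.
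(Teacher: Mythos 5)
Your proposal is correct and follows essentially the same route as the paper: the normalized trace splitting $Rh_*\BQ_X\simeq\BQ_{X'}\oplus K$, the $t$-exactness of the finite morphism $h'$, and the identification $Rf'_*Rh_*=Rh'_*Rf_*$ give that $\xi\in P_kH^m(X',\BQ)$ if and only if $h^*\xi\in P_kH^m(X,\BQ)$, after which one concludes from Proposition \ref{prop1.1} applied to $f$. Your final step, transferring the Ker--Im description back to $X'$ via $h_*$ and the projection formula, merely makes explicit a deduction the paper leaves implicit.
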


\begin{proof}
The composition of the following maps is multiplication by the degree of $h$
\[
\BQ_{X'} \to h_*h^* \BQ_{X'} = h_*\BQ_X \to \BQ_{X'},
\]
where the first map is the adjunction and the last map is the trace map. This composition is nonzero since $h$ has non-zero degree, so it yields the splitting
\begin{equation}\label{48}
h_* \BQ_X = \BQ_{X'} \oplus \CG, \quad \CG \in \mathrm{D}^b_c(X'),
\end{equation}
which further induces the embedding 
\begin{equation}\label{2020}
h^*: H^*(X', \BQ) \hookrightarrow H^*(X, \BQ) 
\end{equation}
as a  direct summand. After pushing forward (\ref{48}) along $f': X' \to Y'$, we obtain
\[
{^\mathfrak{p} \CH}^k(\mathrm{Rf'}_\ast h_\ast \BQ_X) = {^\mathfrak{p} \CH}^k(\mathrm{Rf'}_\ast \BQ_{X'}) \oplus {^\mathfrak{p} \CH}^k(\mathrm{Rf'}_\ast \CG).
\]
On the other hand, by the $t$-exactness of the finite morphism $h'$, we have
\[
{^\mathfrak{p} \CH}^k(\mathrm{Rf'}_\ast h_\ast \BQ_X) = {^\mathfrak{p} \CH}^k(h'_*\mathrm{Rf}_\ast \BQ_X)  =h'_* {^\mathfrak{p} \CH}^k(\mathrm{Rf}_\ast \BQ_X).
\]
Hence the pullback (\ref{2020}) induced by (\ref{48}) is filtered strict with respect to the perverse filtrations associated with $f$ and $f'$, and we have 
\begin{equation}\label{filer_strict}
P_kH^d(X, \BQ) \cap \mathrm{Im}(h^*) \subset P_kH^d(X', \BQ),\quad \forall k,d.
\end{equation}
We deduce (\ref{2019}) from (\ref{filer_strict}) and Proposition \ref{prop1.1} (applied to $f: X\to Y$).
\end{proof}

\subsection{Proof of Theorem \ref{taut_abelian}}\label{Section2.7} In this section, we complete the proof of Theorem \ref{taut_abelian} by combining the tools developed in the previous sections.

Let $A$ be an abelian surface and $\beta$ be an ample curve class. A \emph{perverse package} associated with the pair $(A, \beta)$ is defined to be a triple
\[
 ([\CF_\beta], \alpha_\beta, L_\beta),
\]
where $[\CF_\beta]$ is a universal class on $A \times \CM_{\beta,A}$, $L_\beta \in H^2(\CM_{\beta,A}, \BC)$ is the class introduced in Section 2.6, and $\alpha \in H^2(A\times \CM_{\beta,A}, \BC)$ is of the type (\ref{alpha}).

An isomorphism between the perverse packages associated with $(A, \beta)$ and $(A', \beta')$ is the following:
\begin{enumerate}
    \item[(i)] An  isomorphism of  graded $\BC$-vector spaces 
    \[
    g: H^*(A, \BC) \xrightarrow{\simeq} H^*(A', \BC).
    \]
    \item[(ii)] An isomorphism of graded  $\BC$-algebras
    \[
    \phi: H^*(\CM_{\beta,A}, \BC) \xrightarrow{\simeq} H^*(\CM_{\beta',A'}, \BC).
    \]
    \item[(iii)] The isomorphisms $g$ and $\phi$ satisfy that
    \[
    \begin{split}
        (g\otimes \phi) \mathrm{ch}^{\alpha_\beta} (\CF_\beta) & = \mathrm{ch}^{\alpha_{\beta'}}(\CF_{\beta'}),\\
        \phi(L_\beta) & = L_{\beta'}.
    \end{split}
    \]
\end{enumerate}
Isomorphisms of perverse packages form an equivalence relation. We call two pairs $(A, \beta)$ and $(A', \beta')$ \emph{perversely isomorphic} if there is a perverse package associated with $(A,\beta)$ isomorphic to a perverse package associated with $(A', \beta')$.

\begin{prop}\label{prop2.5}
Assume that $(A, \beta)$ is perversely isomorphic to $(A', \beta')$. If Theorem \ref{taut_abelian} holds for $(A, \beta)$, then it also holds for $(A', \beta')$.
\end{prop}

\begin{proof}
By Proposition \ref{prop2.4}, the condition 
\[
\phi(L_\beta) = L_{\beta'}
\]
implies that the perverse filtrations on $H^*(\CM_{\beta,A}, \BC)$ and $H^*(\CM_{\beta', A'}, \BC)$ are identified via $\phi$. Let 
\[
H^*(\CM_{\beta,A}, \BC) = \bigoplus_{k,d} \widetilde{G}_k H^d(\CM_{\beta,A}, \BC)
\]
be the splitting of the perverse filtration for $\CM_{\beta, A}$ satisfying
\[
\int_\gamma \mathrm{ch}^{\alpha_\beta}_k(\CF_\beta) \in \widetilde{G}_{k-1}H^*(\CM_{\beta,A}, \BC).
\] 
Then the decomposition 
\[
\
\widetilde{G}_k H^d(\CM_{\beta',A'}, \BC) = \phi(\widetilde{G}_k H^d(\CM_{\beta,A}, \BC))
\]
also splits the perverse filtration for $\CM_{\beta',A'}$, and the corresponding tautological classes lie in its correct pieces by  condition (iii) in the definition of isomorphism of perverse package.
\end{proof}

If $(A, \beta)$ is perversely isomorphic to $(A', \beta')$, then condition (ii) implies that
\begin{equation}\label{beta2beta2}
\beta^2 = \mathrm{dim}(\CM_{\beta,A}) -2 = \mathrm{dim}(\CM_{\beta',A'}) -2 = \beta'^2.
\end{equation}
The following theorem, which we  deduce from Theorem \ref{monodromy_main}, establishes the  converse
to (\ref{beta2beta2}). In other words, perverse equivalence classes associated with pairs $(A, \beta)$ are larger than the equivalence classes
given  by the deformation types of the \emph{fibrations} $\pi_\beta: \CM_{\beta,A} \to B$; see Remark \ref{remark2.8}.

\begin{thm}\label{thm2.6}
Any two pairs $(A, \beta)$ and $(A', \beta')$ satisfying
\[
\beta^2 = \beta'^2 
\]
are perversely isomorphic.
\end{thm}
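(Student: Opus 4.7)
\emph{Proof plan.} The plan is to construct the required isomorphism of perverse packages from a suitable element of $\mathrm{Spin}(S_{A'}^+ \otimes \BC)$ interchanging the Mukai vectors $v_\beta$ and $v_{\beta'}$, combined with an extension of Markman's monodromy construction of Theorem \ref{monodromy_main} to the cross-moduli setting.

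Since both even Mukai lattices $S_A^+$ and $S_{A'}^+$ are isomorphic to $U^{\oplus 4}$, I would begin by fixing an integral isometry $\iota: S_A^+ \xrightarrow{\simeq} S_{A'}^+$ and lifting it, via the Spin representation of \S\ref{spin8reps}, to a parity-preserving isomorphism $\tilde\iota: H^*(A, \BC) \xrightarrow{\simeq} H^*(A', \BC)$. Within $S_{A'}^+ \otimes \BC$, the two-dimensional subspaces $\mathrm{span}(\iota(v_\beta), (0,0,1))$ and $\mathrm{span}(v_{\beta'}, (0,0,1))$ have the same Gram matrix: the class $(0,0,1)$ is isotropic; the vectors $v_\beta$ and $v_{\beta'}$ have vanishing rank component and are therefore orthogonal to $(0,0,1)$; and by hypothesis $\iota(v_\beta)^2 = v_\beta^2 = v_{\beta'}^2$. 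Witt's extension theorem therefore produces an isometry $g$ of $S_{A'}^+ \otimes \BC$ that fixes $(0,0,1)$ and sends $\iota(v_\beta) \mapsto v_{\beta'}$; I lift $g$ to $\mathrm{Spin}(S_{A'}^+ \otimes \BC)$ and form the composite $\bar h := g \cdot \tilde\iota$, which sends $v_\beta \mapsto v_{\beta'}$ and $(0,0,1) \mapsto (0,0,1)$.

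Next, I would apply the natural cross-moduli extension of Markman's construction from \S\ref{Monodromy} to the correspondence
\[
x_{\bar h; v_\beta, v_{\beta'}} = \pi_{12}^*\bigl[(\mathrm{id} \otimes \bar h)\mathrm{ch}([\CF_\beta])\bigr]^\vee \cup \pi_{23}^* \mathrm{ch}([\CF_{\beta'}])
\]
on $\CM_{\beta,A} \times A' \times \CM_{\beta',A'}$. Modeled on (\ref{gammagv}), this yields a graded $\BC$-algebra isomorphism $\phi: H^*(\CM_{\beta,A},\BC) \xrightarrow{\simeq} H^*(\CM_{\beta',A'},\BC)$. For a suitable choice of twisting classes $\alpha_\beta$ and $\alpha_{\beta'}$ of the type (\ref{alpha}), the analog of equation (\ref{univ_fam}) yields the Chern character compatibility $(\tilde\iota \otimes \phi)\,\mathrm{ch}^{\alpha_\beta}(\CF_\beta) = \mathrm{ch}^{\alpha_{\beta'}}(\CF_{\beta'})$ required by condition (iii) in the definition of perverse isomorphism.

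Finally, the condition $\phi(L_\beta) = L_{\beta'}$ follows from the Spin-equivariance of $\varphi$ (Remark \ref{compatibilityspin}) together with two features of $\bar h$: by construction it sends the class $(0,0,1) \in v_\beta^\perp$ to $(0,0,1) \in v_{\beta'}^\perp$, and its induced action on $H^2(A\times \hat A, \BC) \to H^2(A'\times \hat{A}', \BC)$ can be arranged to send $\beta \boxtimes 1_{\hat A}$ to $\beta' \boxtimes 1_{\hat{A}'}$ by a further adjustment of $g$ within the intersection of the stabilizers of $\iota(v_\beta)$ and $(0,0,1)$. The main technical obstacle is the extension of Markman's self-correspondence construction \cite{Markman3} to the cross-moduli setting $\CM_{v,A} \times A \times \CM_{v',A'}$ with $v \ne v'$ together with the verification of the analog of (\ref{univ_fam}); this should follow from a Zariski-density argument parallel to the one in the proof of Theorem \ref{monodromy_main}, approximating $\bar h$ by integral Spin elements for which Markman's integral construction directly applies.
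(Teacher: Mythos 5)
Your overall strategy --- produce a parity-preserving complex isometry $H^*(A,\BC)\to H^*(A',\BC)$ taking $v_\beta$ to $v_{\beta'}$ and normalized on the distinguished classes, realize it on the cohomology of the moduli spaces by a Markman-type correspondence, and pass from integral to complex coefficients by Zariski density --- is the strategy of the paper. The genuine gap is in the step you defer as a ``technical obstacle.'' The density argument of Theorem \ref{monodromy_main} lives inside the stabilizer $\mathrm{Spin}(S_A^+)_v\subset\mathrm{Spin}(S_A^+\otimes\BC)_v$ of a \emph{single} Mukai vector acting on a \emph{single} moduli space; there the Zariski-closed identities being propagated are already known at the integral points by Markman's monodromy theorem. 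In your cross-moduli setting, the set of isometries $S_A^+\otimes\BC\to S_{A'}^+\otimes\BC$ taking $v_\beta$ to $v_{\beta'}$ is a torsor under that stabilizer, and a density argument needs an integral anchor point at which the correspondence $x_{\bar h;v_\beta,v_{\beta'}}$ is already known to define a graded ring isomorphism matching universal classes. That anchor is \emph{not} supplied by the construction of Section \ref{Monodromy} (which is only defined for $g$ fixing $v$): it is a separate geometric theorem, \cite[Theorem 8.3]{Markman3}, resting on Yoshioka's results \cite{Yo} that relate the two moduli spaces by deformations and Fourier--Mukai transforms. Without naming that input your density argument has no base case; once you do name it, your argument factors into exactly the two steps of the paper's proof: an integral pair $(g_1,\phi_1)$ with $g_1(v_\beta)=v_{\beta'}$ from \cite[Theorem 8.3]{Markman3}, followed by a correction $g_2$ in the complex stabilizer of $v_{\beta'}$, which is precisely what Theorem \ref{monodromy_main} handles.

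Two smaller points. First, your Witt-theorem normalization only imposes $g(0,0,1)=(0,0,1)$ and $g(\iota(v_\beta))=v_{\beta'}$; the paper additionally requires $g(1,0,0)=(1,0,0)$ and that the restriction to $S_A^+$ be grading-preserving, and it is these extra conditions, through the representation of Section \ref{spin8reps} on $V=H^1(A,\BZ)\oplus H^1(\hat A,\BZ)$, that force the induced map on $H^2(A\times\hat A,\BC)$ to send $\beta\boxtimes 1_{\hat A}$ to $\beta'\boxtimes 1_{\hat A'}$ and hence give $\phi(L_\beta)=L_{\beta'}$ via the equivariance of Remark \ref{compatibilityspin}. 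Your ``further adjustment of $g$'' is doing real work here and should be made explicit; the cleanest fix is to run Witt on the three-dimensional subspace spanned by $(1,0,0)$, $(0,0,1)$ and the Mukai vector, whose Gram matrices on the two sides agree because $\chi$ is common to both packages and $\beta^2=\beta'^2$. Second, the independence of the correspondence class from the choice of universal classes, and its multiplicativity, must also be carried through the cross-moduli density argument; in the paper these are inherited from the integral statements at the anchor point.
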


\begin{proof}

As we explain at the end of the proof, it will be sufficient to find an isomorphism of  graded $\BC$-vector spaces
\begin{equation}\label{39}
g: H^*(A, \BC)  \to H^*(A', \BC)
\end{equation}
satisfying the following two properties:
\begin{enumerate}
    \item[(a)] $g$ preserves the intersection pairing;
    \item[(b)] {$g$ satisfies the following identities}
    \begin{equation}\label{40}
    \begin{split}
        g(0,0,1) & = (0,0,1),\\
        g(1,0,0) & = (1,0,0),\\
        g(0,\beta,0) & = (0, \beta',0),
    \end{split}
    \end{equation}
\end{enumerate}
and a {graded $\BC$-algebra  isomorphism}
 \begin{equation}\label{fi}
 \phi : H^*(\CM_{\beta,A}, \BC) \to H^*(\CM_{\beta', A'}, \BC)
 \end{equation}
satisfying  condition (iii) {in the definition of isomorphism of perverse packages.}

We construct $(g, \phi)$ in two steps. 

{We note that $g_1,g_2$, and $ g=g_2\circ g_1$ constructed below automatically satisfy condition (a)
by construction.}

Firstly, since the Mukai vectors (\ref{vbeta})  $v_\beta$ and $v_{\beta'}$ are primitive with the same Mukai length, 
\[
v_\beta^2 = v_{\beta'}^2 \in \BZ,
\]
we can apply \cite[Theorem 8.3]{Markman3} (which follows from Yoshioka \cite{Yo}) to find
{isometries}
\[ g_1: H^*(A, \BZ)  \to H^*(A', \BZ) ,\quad \phi_1: H^*(\CM_{\beta,A}, \BQ) \to H^*(\CM_{\beta', A'}, \BQ)\]
such that $g_1(v_\beta) = v_{\beta'}$ and
\begin{equation}\label{eq41}
(g_1 \otimes \phi_1) \mathrm{ch}(\CF_\beta) = \mathrm{ch}(\CF_{\beta'})
\end{equation}
with $[\CF_{\beta'}]$ a universal class on $A \times \CM_{\beta,A}$. Moreover, the morphism $\phi_1$ satisfies the evident variant of Remark \ref{compatibilityspin}, \emph{i.e.}, the morphism 
\[
\varphi_{\beta'}^{-1}\phi_1 \varphi_\beta: v_\beta^\perp \oplus H^2(A\times \hat{A},\BQ) 
\to v_{\beta'}^\perp \oplus H^2(A'\times \hat{A'},\BQ)
\]
is induced by the isomtery $g_1: H^*(A, \BZ) \to H^*(A', \BZ)$; see  \cite[Proposition 8.5]{Markman2}. We may say that the identifications $\varphi$ (\ref{eqn39}) for $\beta$ and $\beta'$ are compatible with $g_1$ and $\phi_1$.

If $\beta$ and $\beta'$ have different divisibility, then $g_1$ {cannot satisfy the condition (b) above}, so that  we proceed as follows.
We choose $g_2 \in \mathrm{SO}(S_{A'}^+\otimes {\BQ})_{v_{\beta'}}$ such that the restriction
$$g_2\circ g_1|_{S_{A}^{+}}: S_{A}^{+} \otimes {\BQ} \rightarrow S_{A'}^{+}\otimes {\BQ}$$
is grading-preserving and satisfies condition (b).
We can lift $g_2$ to an element of
$\mathrm{Spin}(S_{A'}^{+}\otimes \BC)_{v_{\beta'}}$, also denoted by $g_2$, and take the associated isomorphisms
\[
\begin{split}
 g_2: &  H^*(A',\BC)  \rightarrow H^*(A',\BC) \qquad \mbox{ (cf. Section \ref{spin8reps})}
\\
\phi_2=  \gamma_{g_2,v_{\beta'}}: &   H^*(\CM_{\beta',A'},\BC)  \rightarrow H^*(\CM_{\beta',A'},\BC)
\quad
\mbox{(cf. Section \ref{Monodromy})}.
\end{split}
\]
By Theorem \ref{monodromy_main}, we have
\begin{equation}\label{eq42}
(g_2 \otimes \phi_2) \mathrm{ch}(\CF_{\beta'}) =  \mathrm{ch}^{\pi_\CM^*a}(\CF_{\beta'}), \quad {\exists} a \in H^2(\CM_{\beta',A'}, \BC).
\end{equation}

We set $g = g_2 \circ g_1$ and $\phi = \phi_2\circ \phi_1$.
Then  (\ref{eq41}) and (\ref{eq42}) imply that
\[
(g\otimes \phi)\mathrm{ch}(\CF_{\beta}) = \mathrm{ch}^{\pi_\CM^*a}(\CF_{\beta'}).
\]
In particular, for any class $\alpha_\beta \in H^2(A\times \CM_{\beta,A}, \BC)$ of type (\ref{alpha}), we have
\[
(g\otimes \phi)\mathrm{ch}^{\alpha_\beta}(\CF_{\beta}) = \mathrm{ch}^{\alpha_{\beta'}}(\CF_{\beta'}), \quad {\rm with}\; \alpha_{\beta'} = \pi_\CM^*a+ (g\otimes \phi)\alpha_\beta,
\]
where the class $\alpha_{\beta'}$ is also of type (\ref{alpha}). {The first condition appearing in (iii) is thus met.}

{It remains to verify that $\phi (L_\beta)=L_{\beta'}$.
In view of  the compatibility of the $\varphi$'s for $\beta$ and $\beta'$ with $g_1, \phi_1$, and of $\varphi$ for $\beta'$ with $g_2, \phi_2$ (cf. Remark \ref{compatibilityspin}), and in view  of the construction of $g_1$ and $g_2$, 
the isomorphism $\phi$  sends $(0,0,1) \in v_\beta^\perp$ to $(0,0,1) \in v_{\beta'}^\perp$, and sends $\beta\boxtimes 1_{\hat{A}} \in H^2(A\times \hat{A}, \BC)$ to $\beta' \boxtimes 1_{\hat{A'}} \in H^2(A'\times \hat{A'}, \BC)$. In particular, we have
$
\phi(L_\beta) = L_{\beta'}
$, and the desired condition (3) is met in its entirety.
This completes the proof of Theorem \ref{thm2.6}. }
\end{proof}

Theorem \ref{taut_abelian} follows from Theorem \ref{Step1}, Proposition \ref{prop2.5}, and Theorem \ref{thm2.6}
{(with $n:=\beta^2/2$, $v_n$ and $\beta$)}. In Section \ref{Section3.5}, we prove a strengthened version of Theorem \ref{taut_abelian} to the effect that  the decomposition $\tilde{G}_*H^*(\CM_{A,\beta}, \BC)$ is the one  induced by a variant of a construction of Deligne's. 

\begin{rmk}\label{remark2.8}
It was shown in \cite{dCM} that perverse filtrations behave well under deformations. However, for the pairs $(A, \beta)$ and $(A',\beta')$ such that $\beta$ and $\beta'$ have different divisibilities in $H^2(A, \BZ)$ and $H^2(A',\BZ)$, the fibrations $\pi_\beta$ and $\pi_{\beta'}$ are not deformation equivalent.
Therefore,  in order to reduce the proof of Theorem \ref{taut_abelian} for any pair $(A,\beta)$ to that for a special pair
\[
A= E \times E', \quad \beta = \sigma +n f,
\] 
it is essential to consider an equivalence relation weaker than deformation equivalences of fibrations $\pi_\beta$. This is our motivation for introducing  equivalences of perverse packages. 
\end{rmk}

\section{Splittings of perverse filtrations}

\subsection{Overview}
In this section, we study splittings of the perverse filtration associated with a proper surjective morphism $\pi: X\to Y$ with $X$ and $Y$ nonsingular. As an application, we strengthen Theorem \ref{taut_abelian} by requiring that the decomposition  {given by said theorem}
\begin{equation}\label{decomp6}
H^\ast(\CM_{\beta, A}, \BC) = \bigoplus_{i,d} \widetilde{G}_iH^d(\CM_{\beta, A}, \BC).
\end{equation}
is induced by a ``Lefschetz class" via the mechanisms introduced in \cite{D, dC, dCM4}; see Theorem \ref{strengthened2.1}. As discussed in Remark \ref{remark1}, this is crucial in the study of the specialization morphism (\ref{sp}) in Section 4. 

Throughout, we work with $\BQ$-coefficients except for Section \ref{Section3.5}, where we need to switch to $\BC$-coefficients in order to apply results in Section \ref{Section2.7}. However, we note that all discussions in 
Sections \ref{Lefschetz classes}--\ref{Section3.4} remain valid if we replace $\BQ$ by $\BC$.

\subsection{Lefschetz classes}\label{Lefschetz classes}
We consider the perverse filtration associated with $\pi: X \to Y$,
\begin{equation}\label{e50}
P_0H^\ast(X, \BQ) \subset P_1H^\ast(X, \BQ) \subset \dots \subset P_{2{R}}H^\ast(X, \BQ) =  H^\ast(X, \BQ),
\end{equation}
where ${R}$ is the defect of semismallness  of $\pi$ (\ref{defect}). The action of a class $\eta \in H^2(X, \BQ)$ on the cohomology $H^*(X, \BQ)$ via cup product satisfies
\begin{equation}\label{e51}
\eta: P_kH^i(X, \BQ) \to P_{k+2}H^{i+2}(X, \BQ),
\end{equation}
which further induces an action on 
\[
\BH = \bigoplus_{\star,\bullet\geq 0} \mathrm{Gr}_\star^P H^{\bullet}(X, \BQ)= \bigoplus_{\star,\bullet \geq 0} \left( P_\star H^{\bullet}(X, \BQ)/P_{\star-1}H^{\bullet}(X, \BQ) \right).
\]

 We say that  $\eta \in H^2(X, \BQ)$  is a \emph{$\pi$-Lefschetz class}  if its induced action on $\BH$ satisfies the hard Lefschetz-type condition in the sense of \cite[Assumption 2.3.1]{dC}, \emph{i.e.}, the actions
\begin{equation}\label{pi-lefschetz}
\eta^k: \mathrm{Gr}^P_{{R} -k}H^*(X, \BQ) \xrightarrow{\simeq}  \mathrm{Gr}^P_{{R}+k}H^*(X, \BQ), \quad \forall k\geq 0,
\end{equation}
are isomorphisms. As a typical example, the relative Hard Lefschetz Theorem \cite{BBD} with respect to $\pi: X \to Y$ implies that a relatively ample class for $\pi$  is a $\pi$-Lefschetz class. The following lemma gives examples of Lefschetz classes other than relatively ample classes.  

\begin{lem}\label{Lemma3.1}
Let $A$ be an abelian variety of dimension $m$. Any class $\eta \in H^2(A, \BQ)$ satisfying
\begin{equation}\label{eta1}
\eta^m \neq 0 
\end{equation}
is a Lefschetz class with respect to $\pi: A \to \mathrm{pt}$.
\end{lem}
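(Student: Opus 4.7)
The plan is to observe that for $\pi \colon A \to \mathrm{pt}$ the perverse filtration reduces to the filtration by cohomological degree, so that the $\pi$-Lefschetz condition is nothing but the classical Hard Lefschetz statement for $\eta$ on $A$; the latter holds for any $\eta$ with $\eta^m \neq 0$ because $H^*(A,\BQ)$ is an exterior algebra and such an $\eta$ makes $H^1(A,\BQ)$ into a symplectic vector space.

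First, I would unwind the definitions when $Y = \mathrm{pt}$. The perverse $t$-structure on $D^b_c(\mathrm{pt}) = D^b(\BQ\text{-Vect})$ is the standard one. The defect of semismallness from (\ref{defect}) is $R = \dim(A \times_{\mathrm{pt}} A) - \dim A = 2m - m = m$, so the shift $\dim A - R$ is zero. Hence $R\pi_*\BQ_A \cong \bigoplus_i H^i(A,\BQ)[-i]$, the perverse summands are $\CP_i = H^i(A,\BQ)$, and the definition (\ref{perv_filt}) yields
\[
P_kH^d(A,\BQ) =
\begin{cases} H^d(A,\BQ) & d \leq k, \\ 0 & d > k. \end{cases}
\]
Consequently $\mathrm{Gr}^P_k H^*(A,\BQ) = H^k(A,\BQ)$, and condition (\ref{pi-lefschetz}) with $R = m$ becomes: cupping with $\eta^k$ gives an isomorphism $\eta^k \colon H^{m-k}(A,\BQ) \xrightarrow{\simeq} H^{m+k}(A,\BQ)$ for every $k \geq 0$.

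Second, I would prove this classical Hard Lefschetz statement directly. Set $V = H^1(A,\BQ)$, which has dimension $2m$, so that $H^*(A,\BQ) = \bigwedge^* V$ and $\eta \in \bigwedge^2 V$. The hypothesis $\eta^m \neq 0$ in $\bigwedge^{2m} V$ is equivalent to $\eta$ being a nondegenerate alternating $2$-form on $V^\vee$; equivalently, $(V,\eta)$ is a symplectic vector space, and one can choose a symplectic basis in which $\eta = \sum_{i=1}^m \alpha_i \wedge \beta_i$. There is a standard $\mathfrak{sl}_2$-triple $(e,h,f)$ acting on $\bigwedge^* V$ in which $e = \eta \wedge -$, $h$ acts on $\bigwedge^i V$ by multiplication by $i - m$, and $f$ is the contraction dual to $e$ built from the basis above; a direct computation verifies the bracket relations $[h,e] = 2e$, $[h,f] = -2f$, $[e,f] = h$. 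The structure theorem for finite-dimensional $\mathfrak{sl}_2$-representations then gives the required isomorphism $e^k \colon (\bigwedge^* V)_{-k} \to (\bigwedge^* V)_{k}$, i.e.\ $\eta^k \colon \bigwedge^{m-k} V \to \bigwedge^{m+k} V$.

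There is no serious obstacle: the computation of the perverse filtration for a constant map is essentially bookkeeping, and the $\mathfrak{sl}_2$-action on the exterior algebra of a symplectic space is classical. The only thing one has to be attentive to is the conventions for the defect of semismallness and the shift $[\dim X - R]$, to confirm that $\mathrm{Gr}^P_k H^*(A,\BQ)$ really equals $H^k(A,\BQ)$ and not some reindexed version.
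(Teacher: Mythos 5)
Your proof is correct and follows essentially the same route as the paper: the paper also identifies $H^*(A,\BQ)$ with $\bigwedge^* H^1(A,\BQ)$, observes that $\eta^m\neq 0$ makes $\eta$ a symplectic form, and then deduces Hard Lefschetz by writing $(H^1(A,\BC),\eta)$ as a tensor product of $m$ symplectic planes, which is just a factor-by-factor packaging of the $\mathfrak{sl}_2$-triple you construct globally. Your explicit unwinding of the perverse filtration for $\pi\colon A\to\mathrm{pt}$ is a useful sanity check that the paper leaves implicit, but it does not change the substance of the argument.
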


\begin{proof} 
Let $W = H^1(A, \BQ)$. We identify $H^k(A, \BQ)$ with $\wedge^k W$, and therefore $\eta \in \wedge^2W$. The condition (\ref{eta1}) implies that $\eta$ defines a (constant) symplectic form on $W^*$. In particular, we can write $\eta = \sum_{i=1}^m e_i \wedge f_i$ under a basis $e_1,\dots, e_m, f_1, \dots, f_m$ of $W$. Hence the pair $(W\otimes \BC, \eta)$ is the tensor product of $m$ copies of the symplectic plane $(\BC^2, e \wedge f)$, for which the statement is clear.
\end{proof}

Recall the Hitchin fibration $h: \CM_{\mathrm{Dol}} \to \Lambda$. The following proposition is a numerical criterion for Lefschetz classes with respect to $h$.

\begin{prop}\label{prop3.2_Hitchin} Let $F$ be a closed fiber of $h$. A class $\eta \in H^2(\CM_{\mathrm{Dol}}, \BQ)$ is a Lefschetz class with respect to $h$ if and only if {the  following cap product with the fundamental class of the connected fiber $F$ (counting components with multiplicities) is non-trivial}:
\begin{equation}\label{eqn50}
0 \neq \eta^{\mathrm{dim(F)}} \cap [F] \in H_0(F,\BQ) \simeq \BQ.
\footnote{
We note that the condition (\ref{eqn50}) does not depend on the choice of the  fiber $F$. }
\end{equation}
\end{prop}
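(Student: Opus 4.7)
The plan is to reduce both directions of the equivalence to a single condition on the top graded piece of the perverse filtration, where it becomes visible as fiberwise integration. Recall that $h: \CM_{\mathrm{Dol}} \to \Lambda$ is Lagrangian with connected fibers, so $\dim F = \dim \Lambda = R$, the defect of semismallness. In the decomposition theorem $Rh_\ast\BQ_{\CM_{\mathrm{Dol}}}[R] \simeq \bigoplus_{i=0}^{2R}\CP_i[-i]$, any IC sheaf supported on a proper closed subvariety $Z \subsetneq \Lambda$ contributes trivially to $H^{-R}(\Lambda, -)$ by a dimension count (perverse sheaves on $Z$ are concentrated in degrees $[-\dim Z, 0]$). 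Therefore
\[
\mathrm{Gr}^P_{2R}H^{2R}(\CM_{\mathrm{Dol}}, \BQ) = H^{-R}(\Lambda, \CP_{2R}) = H^0(\Lambda, \BQ_\Lambda) = \BQ,
\]
and unwinding definitions identifies the projection $H^{2R}(\CM_{\mathrm{Dol}}, \BQ) \twoheadrightarrow \BQ$ with the fiberwise cap-product map $\alpha \mapsto \alpha \cap [F]$.

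For the ``only if'' direction, if $\eta$ is a $\pi$-Lefschetz class, then $\eta^R: \mathrm{Gr}^P_0 H^0 \to \mathrm{Gr}^P_{2R} H^{2R}$ is an isomorphism. The source is one-dimensional, spanned by the unit $1 \in H^0(\CM_{\mathrm{Dol}}, \BQ)$, so the class $[\eta^R]$ in $\mathrm{Gr}^P_{2R}H^{2R} \simeq \BQ$ is non-zero; under the identification above this reads $\eta^R \cap [F] \neq 0$.

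For the ``if'' direction, suppose $\eta^R \cap [F] \neq 0$. A general fiber $F$ is a smooth abelian variety of dimension $R$ (the Jacobian of a smooth spectral curve), and the hypothesis becomes $(\eta|_F)^R \neq 0$ in $H^{2R}(F, \BQ)$. By Lemma \ref{Lemma3.1}, $\eta|_F$ is a Lefschetz class on $F$. To upgrade this fiberwise statement to $\pi$-Lefschetzness of $\eta$, I would invoke the support theorem for the $\mathrm{GL}_r$ Hitchin fibration (Chaudouard--Laumon, de~Cataldo, and others), which forces every simple summand of each $\CP_i$ to have full support, i.e., $\CP_i$ is the IC extension of $R^i h_{sm,\ast}\BQ$ from the smooth locus $\Lambda^\circ$. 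The action of $\eta$ on each $\CP_i$ is then determined by its action on a generic stalk $H^\ast(F, \BQ)$, which satisfies the hard Lefschetz condition. Consequently the induced action of $\eta$ on $\BH$ satisfies hard Lefschetz.

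The main obstacle is the propagation step in the ``if'' direction: without the support theorem, the $\CP_i$ could contain IC summands supported on proper substrata $Z \subsetneq \Lambda$ on which fiberwise Lefschetzness of $\eta|_F$ does not translate to Lefschetzness for $\eta$ (the relevant action would involve $\eta|_{F_Z}$ for different fibers $F_Z$, for which no fiberwise Lefschetz input is available). With the support theorem in hand, however, the argument becomes a clean reduction to Lemma \ref{Lemma3.1} applied to an abelian variety of dimension $R$.
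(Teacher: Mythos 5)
Your argument is correct, but it takes a genuinely different route from the paper's, and it leans on a much heavier external input. The paper never analyzes the decomposition theorem for $h$ directly: it uses the tensor decomposition $H^*(\CM_{\mathrm{Dol}},\BQ)=H^*(\hat{\CM}_{\mathrm{Dol}},\BQ)\otimes H^*(\mathrm{Pic}^0(C),\BQ)$ of \cite{dCHM1} together with the fact that $H^2(\hat{\CM}_{\mathrm{Dol}},\BQ)=\BQ\cdot\alpha$ with $\alpha$ relatively ample (Hausel--Thaddeus, Markman), writes $\eta=\mu\,\alpha\otimes 1+1\otimes\xi$, and shows via Lemma \ref{Lemma3.1} and the appendix of \cite{dC} that $\eta$ is $h$-Lefschetz if and only if $\mu\neq 0$ and $\xi^g\neq 0$; the equivalence of this with (\ref{eqn50}) is then an explicit computation after pulling back to the finite cover $\check{F}\times\mathrm{Pic}^0(C)\to F$. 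Your route instead reduces everything to the generic fiber and propagates hard Lefschetz through the decomposition theorem, which forces you to assume that every summand of $Rh_*\BQ$ has full support. That support theorem is true in the relevant setting, but you should be precise about the attribution: Chaudouard--Laumon treat twisting divisors of degree $>2g-2$, and the case actually needed here, $D=K_C$ with $\gcd(r,\chi)=1$, is due to de Cataldo--Heinloth--Migliorini. Granting it, your ``if'' direction is sound (a morphism of full-support intermediate extensions of semisimple local systems is an isomorphism exactly when it is one generically, and Lemma \ref{Lemma3.1} applies to the abelian torsor $F_t$ since the condition $\eta^{R}\cap[F_t]\neq 0$ is independent of $t$), and your ``only if'' direction via $\mathrm{Gr}^P_{2R}H^{2R}\cong\BQ$ and fiber integration is also fine. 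What each approach buys: yours is more conceptual and would apply verbatim to any proper fibration with full supports and abelian generic fibers; the paper's is softer, using only structural facts about $H^2$ and the isogeny decomposition that are already needed elsewhere, and it yields the explicit characterization of $h$-Lefschetz classes as $\{\mu\neq 0,\ \xi^g\neq 0\}$, which is reused later, for instance in the description (\ref{Hitchin_splitting}) of the first Deligne splitting.
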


\begin{proof}
 We denote by $\hat{\CM}_{\mathrm{Dol}}$ the corresponding moduli space of stable $\mathrm{PGL}_r$-Higgs bundles with
\[
\hat{h}: \hat{\CM}_{\mathrm{Dol}} \to \hat{\Lambda}
\]
the Hitchin fibration. By \cite{HT1, Markman}, we have
\[
H^2( \hat{\CM}_{\mathrm{Dol}}, \BQ ) = \BQ \cdot \alpha
\]
where $\alpha$ is the first Chern class of an $\hat{h}$-ample line bundle on   $\hat{\CM}_{\mathrm{Dol}}$, and therefore is
a $\hat{h}$-Lefschetz class. By the discussion in \cite[Section 2.4]{dCHM1}, we have an isomorpism
\begin{equation}\label{112233}
H^*(\CM_{\mathrm{Dol}}, \BQ) = H^*( \hat{\CM}_{\mathrm{Dol}}, \BQ ) \otimes H^*(\mathrm{Pic}^0(C), \BQ)
\end{equation}
satisfying that
\begin{equation}\label{eqn71}
P_k H^*(\CM_{\mathrm{Dol}}, \BQ) = \bigoplus_{i+j=k} P_iH^*( \hat{\CM}_{\mathrm{Dol}}, \BQ ) \otimes H^j(\mathrm{Pic}^0(C), \BQ).
\end{equation}
Under the identification 
\[
H^2(\CM_{\mathrm{Dol}}, \BQ) = H^2( \hat{\CM}_{\mathrm{Dol}}, \BQ ) \oplus H^2(\mathrm{Pic}^0(C), \BQ) = \BQ \cdot \alpha  \oplus H^2(\mathrm{Pic}^0(C)),
\]
induced by (\ref{112233}), we can express any class $\eta \in H^2(\CM_{\mathrm{Dol}}, \BQ)$ as
\begin{equation}\label{123eta}
\eta = \mu {\alpha  \otimes 1+ 1\otimes \xi}, \quad \mu \in \BQ,~ \xi \in H^2(\mathrm{Pic}^0(C), \BQ).  
\end{equation}
Lemma \ref{Lemma3.1} {combined with \cite[Appendix]{dC}} implies that the class $\eta$ is Lefschetz if and only if 
\begin{equation}\label{eqn61}
\mu \neq 0, \quad \xi^g\neq 0.
\end{equation}
Therefore, it suffices to show that (\ref{eqn61}) is equivalent to the condition (\ref{eqn50}).

We  first consider traceless Higgs bundles
\[
\CM^0_{\mathrm{Dol}} = \{(\CE, \theta) \in \CM_{\mathrm{Dol}}: \mathrm{trace}(\theta)=0\} \subset \CM_{\mathrm{Dol}}.
\]
Recall from \cite[Proposition 2.4.3]{dCHM1} that there is a finite morphism
\[
q: \check{\CM}_{\mathrm{Dol}} \times \mathrm{Pic}^0(C)   \to \CM^0_{\mathrm{Dol}}
\]
with $\check{\CM}_{\mathrm{Dol}}$ the corresponding Dolbeault  moduli  space of stable $\mathrm{SL}_r$-Higgs bundles. The preimage of a closed fiber $F \subset \CM^0_{\mathrm{Dol}}$ of the restricted Hitchin fibration $h|_{\CM^0_{\mathrm{Dol}}}$ is the product
\[
q^{-1}(F) =  \check{F} \times \mathrm{Pic}^0(C),
\]
where $\check{F}$ {is the corresponding  closed fiber} of the $\mathrm{SL}_n$ Hitchin fibration. The pullback of a class (\ref{123eta}) along
\[
\check{F} \times \mathrm{Pic}^0(C) \hookrightarrow \check{\CM}_{\mathrm{Dol}} \times \mathrm{Pic}^0(C)   \xrightarrow{q} \CM^0 \hookrightarrow \CM_{\mathrm{Dol}}
\]
is of the type 
\[
\check{\eta} = \mu \check{\alpha}\otimes 1 + 1\otimes \xi \in H^2(\check{F}, \BQ) \oplus H^2(\mathrm{Pic}^0(C), \BQ) \subset H^2(\check{F} \times \mathrm{Pic}^0(C), \BQ)
\]
with $\check{\alpha}$ an ample class on $\check{F}$. Since $q: \check{F} \times \mathrm{Pic}^0(C) \to F$ is finite
{and surjective}, the condition (\ref{eqn50}) is equivalent to
\[
0 \neq \check{\eta}^{\mathrm{dim}(\check{F} \times \mathrm{Pic}^0(C))} \cap [\check{F}] = \check{\eta}^{\mathrm{dim}(F)}\cap [\check{F}],
\]
which, in turn,  is  equivalent to (\ref{eqn61}). This completes the proof.
\end{proof}

\subsection{The first Deligne splitting}\label{First Deligne Splitting}

Given a projective morphism $\pi : X \to Y$ and a $\pi$-ample line bundle  on $X$, Deligne \cite{D} 
constructs three splittings of the direct image $R\pi_* \BQ_X$ (resp. $R\pi_* \mathrm{IC}_X$ if $X$ is singular), which induce splittings of the perverse filtration on the (resp. intersection) cohomology groups $H^*(X,\BQ)$. In this paper, we need the variant \cite{dC} of this construction
where one starts with a $\pi$-Lefschetz class $\eta \in H^2(X,\BQ)$, \emph{i.e.} one that does not necessarily satisfy the relative Hard Lefschetz Theorem in the derived category $D^b_c(Y)$, but nonetheless satisfies the cohomological version (\ref{pi-lefschetz}).
We need only the first of the resulting three splittings, which we name the \emph{first Deligne splitting}
(\cite{D,dCM4,dC}).


According to \cite{dC}, the first Deligne splitting of the perverse filtration (\ref{e50})   
\[
H^*(X,\BQ) = \bigoplus_{i}G_iH^*(X, \BQ)
\]
 associated with the $\pi$-Lefschetz class $\eta$ (\emph{i.e.} (\ref{pi-lefschetz}) holds)
can be described using \emph{only} the action of $\eta$ on $H^*(X,\BQ)$.  We explain this description explicitly as follows.

For $i  \geq 0$, we let $\mathrm{Gr}^P_i:=P_iH^*(X,\BQ)/P_{i-1}H^*(X,\BQ)$ denote the graded spaces, which are subquotients of cohomology, and let $G_i:= G_i H^*(X,\BQ)$ denote the corresponding image of $\mathrm{Gr}^P_i$ via the first Deligne splitting, i.e. these are the summands in the last paragraph, which are subspaces of cohomology that we want to characterize.

For $0 \leq k \leq {R}$, let 
\[
\mathrm{Gr}^P_k\supseteq Q'_k:= 
\ker \{\eta^{{R}-k+1}: \mathrm{Gr}^P_{k}\to \mathrm{Gr}^P_{2{R} -k +2}\}
\]
be the associated graded-primitive spaces (here $\eta$ acts on the graded spaces of the perverse filtration on cohomology).
Let $Q_k \subseteq G_k$ be the image of $Q'_k$ via the first Deligne splitting. We have
$G_k= \sum_{i \geq 0} \eta^i Q_{k-2i}$ for $0 \leq k \leq {R}$, and $G_{{R}+\kappa}=\eta^k G_{{R}-\kappa}$
(here $\eta$ acts on cohomology).

It follows that, in order to have a complete description of the first Deligne splitting that involves solely
the action of $\eta$ in cohomology, we only need to describe $Q_k \subseteq H^*(X,\BQ)$ in such terms.

We describe $Q_k$ following \cite[Section 2.7]{dCM4}. Note that its context is the one of the fist Deligne splitting arising from working in the derived category, but the proof works verbatim in the present context of cohomology acted upon by  a $\pi$-Lefschetz class. By (\ref{e51}), we have
\[
\eta^j P_k H^*(X, \BQ) \subset P_{k+2j}H^*(X, \BQ).
\]
Let $\Phi_0 (\eta)$ be the composition of the morphisms
\[
\Phi_0 (\eta) : P_kH^*(X, \BQ) \xrightarrow{\eta^{{ R}-k+1}} P_{2{ R}-k+2}H^*(X, \BQ) \to \mathrm{Gr}_{2{ R}-k+2}^PH^*(X, \BQ),
\]
where the first map is cup product and the second map is the projection to the graded piece. We obtain the sub-vector space
\[
\mathrm{Ker}(\Phi_0 (\eta)) \subset P_kH^*(X, \BQ).
\]
The morphisms $\Phi_m (\eta)$ are defined inductively for $m\geq 0$ as follows:
\[
\Phi_m (\eta) : \mathrm{Ker}(\Phi_{m-1} (\eta)) \xrightarrow{\eta^{{ R} -k+m}} P_{2{R} -k+2m}H^*(X, \BQ) \to \mathrm{Gr}_{2 { R} -k+2m}^PH^*(X, \BQ).
\]
Therefore for fixed $k$, we obtain a sequence of sub-vector spaces
\[
\cdots \subset \mathrm{Ker}(\Phi_1(\eta)) \subset \mathrm{Ker}(\Phi_0 (\eta) ) \subset P_kH^*(X, \BQ). 
\]
According to \cite[Proposition 2.7.1]{dCM4}, we have the desired description
\[
Q_k = Q_k (\eta)= \mathrm{Ker}(\Phi_k (\eta)) \subset P_kH^*(X, \BQ), \quad \forall 0 \leq k \leq { R}.
\]

The description of the first Deligne splitting yields immediately the following comparison lemma, which expresses a kind of functoriality for the first Deligne splitting.

{
\begin{lem}\label{comparison1}
Let $X_i$ $(i=1,2)$ be nonsingular varieties with proper surjective morphisms $f_i: X_i \to Y_i$, and let $P_\star H^*(X_i, \BQ)$ be the corresponding perverse filtrations.  Let  
\[
\phi: H^*(X_1, \BQ) \to H^*(X_2, \BQ)
\]
be a morphism
of graded $\BQ$-algebras satisfying 
\begin{enumerate}
    \item[(a)] $\phi(P_kH^d(X_1, \BQ)) \subset P_kH^d(X_2, \BQ)$;
    \item[(b)] $\phi(\eta_1) = \eta_2$,  with {$\eta_i \in H^2(X_i, \BQ)$ an $f_i$-Lefschetz class for $i= 1,2$.}
\end{enumerate}
Then we have
\[
\phi(G_kH^*(X_1, \BQ)) \subset G_k H^*(X_2, \BQ), \quad \forall k \geq 0
\]
with $ H^*(X_i,\BQ)= \oplus_{k\geq 0}G_kH^*(X_i, \BQ)$ the first Deligne splitting associated with $\eta_i$, $i=1,2$.
\end{lem}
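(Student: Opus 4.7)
The plan is to leverage the explicit characterization of the first Deligne splitting recalled in Section \ref{First Deligne Splitting}: the subspaces $Q_k(\eta) \subseteq P_kH^*(X,\BQ)$ are defined as the kernel $\mathrm{Ker}(\Phi_k(\eta))$ of an iterated sequence of operators built solely from cup product with $\eta$ and projection to graded pieces of the perverse filtration, and $G_kH^*(X,\BQ)$ is then assembled from the $Q_j$'s via the relations $G_k = \sum_{j \geq 0} \eta^j Q_{k-2j}$ for $0 \leq k \leq R$ and $G_{R+\kappa} = \eta^\kappa G_{R-\kappa}$ for $\kappa > 0$. Since $\phi$ is a graded ring homomorphism with $\phi(\eta_1) = \eta_2$, the lemma reduces to verifying the containment at the level of primitives, namely
\[
\phi\bigl(Q_k(\eta_1)\bigr) \subseteq Q_k(\eta_2), \quad 0 \leq k \leq R.
\]

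I would prove this by induction on $m \geq 0$, establishing
\[
\phi\bigl(\mathrm{Ker}(\Phi_m(\eta_1))\bigr) \subseteq \mathrm{Ker}(\Phi_m(\eta_2)).
\]
For the base case $m = 0$, let $\alpha \in \mathrm{Ker}(\Phi_0(\eta_1)) \cap P_kH^*(X_1,\BQ)$, which by definition means that $\eta_1^{R-k+1}\cup\alpha \in P_{2R-k+1}H^*(X_1,\BQ)$, i.e.\ the class lies one perverse level lower than what the filtration structure automatically forces. Applying $\phi$ and invoking hypotheses (a) and (b),
\[
\eta_2^{R-k+1}\cup \phi(\alpha) = \phi\bigl(\eta_1^{R-k+1}\cup \alpha\bigr) \in P_{2R-k+1}H^*(X_2,\BQ),
\]
which is exactly the condition for $\phi(\alpha)$ to lie in $\mathrm{Ker}(\Phi_0(\eta_2))$. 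The inductive step runs identically: each $\Phi_m$ is the composition of cup product with a suitable power of $\eta$ followed by projection to a graded piece of $P_\bullet$, and both operations are preserved by $\phi$ thanks to (a) and (b); the induction hypothesis guarantees that $\phi$ lands in the source of $\Phi_m(\eta_2)$.

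Assembling these ingredients, the containment $\phi(G_kH^*(X_1,\BQ)) \subseteq G_kH^*(X_2,\BQ)$ follows at once from $\phi$ being a graded ring map with $\phi(\eta_1)=\eta_2$ together with the explicit description $G_k = \sum_j \eta^j Q_{k-2j}$ (resp.\ $G_{R+\kappa}=\eta^\kappa G_{R-\kappa}$). The only real bookkeeping issue, rather than a genuine obstacle, is the implicit alignment of the defects of semismallness $R_i$ of the two morphisms $f_i$, which appears in the exponent defining $\Phi_0$; in the applications to specialization maps in Section \ref{se4} this matching is automatic from the geometric setup, and more generally the argument still goes through after multiplying by an appropriate power of $\eta_2$ to absorb any discrepancy on the target side.
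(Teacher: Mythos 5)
Your proof is correct and is essentially the paper's own argument spelled out in full: the paper simply observes that the conclusion follows from the description of the first Deligne splitting via the subspaces $Q_k(\eta_i)=\mathrm{Ker}(\Phi_k(\eta_i))$ together with the fact that these kernels are preserved by the $P$-filtered graded ring map $\phi$, which is exactly the content of your induction on $m$ and your reassembly of $G_k$ from the $Q_j$'s. Your closing caveat about aligning the defects of semismallness is a fair observation that the paper also leaves implicit (it writes a single $R$ for both sides), and the equality does hold in every application the paper makes of the lemma.
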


\begin{proof}
It follows from the description of the first Deligne splittings 
\[H^*(X_i, \BQ) = \bigoplus_{0 \leq k \leq r} 
 \bigoplus_{j \geq 0} 
 \left(\eta^j_i Q_{k-2j}(\eta_i)  \bigoplus \eta^{r-k+j}_i Q_{k-2j}  (\eta_i)
 \right) \quad i=1,2
\] 
summarized above, and the fact that the sub-vector spaces
\[
\mathrm{Ker}(\Phi_k (\eta_i)) = Q_k (\eta_i) \subset P_kH^*(X_i, \BQ)
\]
are preserved under the $P$-filtered morphism $\phi$.
\end{proof}
}

\begin{rmk}
Since cohomologically, the Hitchin fibration 
\[
h: \CM_{\mathrm{Dol}} \to  \Lambda,
\]
behaves like the product of the fibrations $\hat{h}: \hat{\CM}_{\mathrm{Dol}} \to \hat{\Lambda}$ and $\mathrm{Pic}^0(C) \to \mathrm{pt}$ via the ring isomorphism
\[
H^*(\CM_{\mathrm{Dol}}, \BQ) =H^*(\hat{\CM}_{\mathrm{Dol}}, \BQ) \otimes H^*(\mathrm{Pic}^0(C), \BQ)
\]
given in \cite[Section 2.4]{dCHM1}, we see from the proof of Proposition \ref{prop3.2_Hitchin}
 {(cf. (\ref{eqn61}))} together with \cite[Appendix]{dC} that the first Deligne splitting associated with any 
$h$-Lefschetz class, i.e. a class of the form $\mu \alpha\otimes 1+ 1\otimes \xi$ with $\mu\neq 0$ and $\xi^g \neq 0$, has the form
\begin{equation}\label{Hitchin_splitting}
G_k H^*(\CM_{\mathrm{Dol}}, \BQ) = \bigoplus_{i+j=k} G_iH^*(\hat{\CM}_{\mathrm{Dol}}, \BQ) \otimes H^j(\mathrm{Pic}^0(C), \BQ), 
\end{equation}
where $G_iH^*(\hat{\CM}_{\mathrm{Dol}}, \BQ)$ is the first Deligne splitting associated with the $\hat{h}$-Lefschetz class 
\[
\alpha \in H^2(\hat{\CM}_{\mathrm{Dol}}, \BQ).
\]
\begin{rmk}\label{dolglcansplit}
In particular, the splitting (\ref{Hitchin_splitting}) does not depend on the choice of an  $h$-Lefschetz class.
\end{rmk}
For  every genus $g \geq 2$, we expect, and actually prove in the $g=2$ case, that (\ref{Hitchin_splitting}) serves as the splitting in Conjecture \ref{P=W2}.
\end{rmk}

\subsection{Semismall maps and Hilbert schemes}\label{Section3.4}
In this section, we study the situation where our morphism $\pi: X \to Y$ can be factored as
\[
X \xrightarrow{f} Z \xrightarrow{g} Y
\]
with $f: X \to Z$ semismall and surjective; in particular, $f$ is generically finite and $\dim (X)=\dim (Z)$. We further assume that there are closed irreducible sub-varieties $Z_i \subset Z$ such that the decomposition theorem for $f$ takes  the form of a canonical  finite direct sum decomposition
\begin{equation}\label{decomp9}
Rf_* \BQ_X = \bigoplus_i \mathrm{IC}_{Z_i}[-\mathrm{dim}(X)],
\end{equation}
where each $\mathrm{IC}_{Z_i}$ is the (perverse) intersection cohomology complex of $Z_i$. Note that we have the following identity concerning intersection cohomology groups
$\mathrm{IH}^d(Z_i, \BQ)=H^{d-\dim Z_i}(Z_i, \mathrm{IC}_{Z_i})$.
One of the $Z_i$ is the total variety $Z$. The restriction of $g$ to each $Z_i$ yields the morphism
\[
g_i: Z_i \to Y_i \subset Y.
\]
We deduce from (\ref{decomp9}) a canonical decomposition of the cohomology of $X$
\begin{equation}\label{decomp8}
H^d(X, \BQ) = \bigoplus_i \mathrm{IH}^{d-c_i}(Z_i, \BQ), {\quad c_i = \dim X - \dim Z_i={\rm codim}_X Z_i.}
\end{equation}

Let ${R}$ be the defect of semismallness of $\pi$ and, for each $i$, let ${R_i}$ be the defect of semismallness of $g_i:Z_i \to Y_i$.

\begin{prop}\label{proposition3.5}
Let  $\alpha \in H^2(Z, \BQ)$ satisfy that, for every $i$,   the restriction
\[
\alpha_i = \alpha|_{Z_i} \in H^2(Z_i, \BQ)
\]
is a $g_i$-Lefschetz class  with associated  first Deligne splitting 
\[
\mathrm{IH}^*(Z_i, \BQ) = \bigoplus_k G_k \mathrm{IH}^*(Z_i, \BQ).
\]
Then $\eta = f^*\alpha \in H^2(X, \BQ)$ is a $\pi$-Lefschetz class, whose associated first Deligne splitting is,
under the identification (\ref{decomp8}), given by
\begin{equation}\label{decomp7}
{G_kH^d(X, \BQ) = \bigoplus_i G_{k-R+{R_i}}\mathrm{IH}^{d-c_i}(Z_i, \BQ).}
\end{equation}
\end{prop}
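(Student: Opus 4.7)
The plan is to prove the statement in four steps: first, decompose the perverse filtration of $\pi$ via (\ref{decomp9}); second, establish that $\eta = f^*\alpha$ acts diagonally on the resulting summands; third, deduce the $\pi$-Lefschetz property from the hypothesis on each $\alpha_i$; and fourth, match the first Deligne splittings via their intrinsic characterization through iterated kernels. The main obstacle will be the second step — the diagonality of the $\cup \eta$-action — which, while standard, requires careful handling of the $\BQ_Z$-module structure on $Rf_*\BQ_X$ and of the supports of the $\mathrm{IC}_{Z_i}$.

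For the first step, since $f$ is semismall, $Rf_*\BQ_X[\dim X]\simeq \bigoplus_i \mathrm{IC}_{Z_i}$ is perverse on $Z$. Pushing forward along $g$ and applying the decomposition theorem to each $g_i: Z_i \to Y_i$, one obtains
\[
R\pi_*\BQ_X[\dim X - R] \;\simeq\; \bigoplus_{i,j} \CP^i_j[-(j + R - R_i)], \qquad \CP^i_j \in \mathrm{Perv}(Y), \;\; 0 \leq j \leq 2R_i,
\]
where $\CP^i_j$ is the $j$-th perverse cohomology sheaf governing the perverse filtration $P^{g_i}_*\mathrm{IH}^*(Z_i, \BQ)$. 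Taking perverse truncations and passing to cohomology yields
\[
P_k H^d(X, \BQ) \;=\; \bigoplus_i P^{g_i}_{k-R+R_i}\, \mathrm{IH}^{d-c_i}(Z_i, \BQ)
\]
under the identification (\ref{decomp8}), with the convention $P^{g_i}_{<0} = 0$.

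For the second step, the class $\alpha \in H^2(Z, \BQ) = \mathrm{Hom}_{D^b_c(Z)}(\BQ_Z, \BQ_Z[2])$ induces a morphism $\alpha \otimes \mathrm{id}: Rf_*\BQ_X \to Rf_*\BQ_X[2]$ through the $\BQ_Z$-module structure on $Rf_*\BQ_X$, and the induced map on cohomology equals $\cup f^*\alpha$. Since the decomposition (\ref{decomp9}) is canonical and each $\mathrm{IC}_{Z_i}$ is supported on $Z_i$, the action of $\alpha$ preserves this decomposition and restricts on the $i$-th summand to $\cup \alpha_i$ where $\alpha_i = \alpha|_{Z_i}$. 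Hence $\cup \eta$ is diagonal with respect to (\ref{decomp8}) and acts as $\cup \alpha_i$ on the $i$-th summand.

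Combining these, on the graded pieces of the perverse filtration one has
\[
\eta^k \;=\; \bigoplus_i \alpha_i^k: \;\;\bigoplus_i \mathrm{Gr}^{P^{g_i}}_{R_i-k} \mathrm{IH}^{*-c_i}(Z_i, \BQ) \;\longrightarrow\; \bigoplus_i \mathrm{Gr}^{P^{g_i}}_{R_i+k} \mathrm{IH}^{*-c_i}(Z_i, \BQ),
\]
and each factor is an isomorphism by the hypothesis that $\alpha_i$ is $g_i$-Lefschetz (both sides vanish for $k > R_i$), so $\eta$ is $\pi$-Lefschetz. To obtain (\ref{decomp7}), I would invoke the intrinsic description of the primitive subspaces $Q_k(\eta) \subset P_k H^*(X, \BQ)$ via the iterated kernels of $\Phi_m(\eta)$ from Section \ref{First Deligne Splitting}. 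A direct check shows that the exponent $R - k + m$ appearing in $\Phi_m(\eta)$ starting from $P_k$ matches the exponent $R_i - (k-R+R_i) + m = R - k + m$ appearing in $\Phi_m(\alpha_i)$ starting from $P^{g_i}_{k-R+R_i}$; combined with the diagonality of the second step, this forces $Q_k(\eta) = \bigoplus_i Q^{(i)}_{k-R+R_i}(\alpha_i)$ (the extra iterations for $m > k-R+R_i$ are trivially empty because the targets vanish), and (\ref{decomp7}) then follows by summing $\eta^j Q_{k-2j}$ (together with the symmetry $G_{R+\kappa}(\eta) = \eta^\kappa G_{R-\kappa}(\eta)$ to handle $k > R$).
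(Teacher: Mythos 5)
Your proof is correct and follows essentially the same route as the paper: apply the decomposition theorem to the composition $X\to Z\to Y$ to express $P_\star H^*(X,\BQ)$ as the shifted direct sum of the $P_\star \mathrm{IH}^*(Z_i,\BQ)$, observe that $\cup\, f^*\alpha$ acts diagonally as $\bigoplus_i \cup\,\alpha_i$, and conclude via the kernel description of the first Deligne splitting. The only difference is one of detail: where the paper cites \cite[Remark 4.4.3]{dCM0} for the diagonality and then asserts the identification of splittings, you supply the naturality argument for the $\BQ_Z$-module action and the explicit matching of the exponents in $\Phi_m$, both of which check out.
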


\begin{proof}{Recall that we have defined the perverse filtration $P$ on $H^*(X,\BQ)$ concentrated in the interval $[0,{2R}]$,
and similarly,  for every $i$, the perverse filtration  $P$ on $\rm{IH}^*(Z_i,\BQ)$ is concentrated in the interval
$[0,{2R_i}]$. It follows that, according to  (\ref{decomp9}) and (\ref{decomp8}),  the direct summand $\rm{IH}^*(Z_i,\BQ)$ contributes to  
$H^*(X,\BQ)$ in perversities in the interval ${[R-R_i,R+R_i]}$.

We apply the decomposition theorem to the composition
\[
X \to Z \to Y
\]
and obtain that the perverse filtration $P_\star H^*(X, \BQ)$ can be expressed in terms of the perverse filtrations $P_\star \mathrm{IH}^*(Z_i, \BQ)$ under (\ref{decomp8}), \emph{i.e.},
\[
P_kH^d(X, \BQ) = \bigoplus_i P_{k-{R} +{R_i}}\mathrm{IH}^{d-c_i}(Z_i, \BQ).
\]

By \cite[Remark 4.4.3]{dCM0}, the action of $\eta=f^*\alpha$ on the l.h.s. of (\ref{decomp8}), is 
the direct sum of the actions of the classes $\alpha_i=\alpha_{|Z_i}$ on the summands of the r.h.s..

Since every $\alpha_i$ is a $g_i$-Lefschetz class, 
we deduce that $\eta$ is a $\pi$-Lefschetz class with associated first Deligne splitting decomposition (\ref{decomp7}).}
\end{proof}

{Next, we show that, given a fibered abelian surface 
\[p:A=E\times E' \to E'\]
  as in Section \ref{Section1.5}, 
the splitting (\ref{canonical_split}) of the perverse filtration on $H^*(A^{[n]}, \BQ)$ associated with the natural morphism  $p_n: A^{[n]} \to E'^{(n)}$, 
 is the first Deligne splitting induced by a  $p_n$-Lefschetz class. 

The morphism $p_n: A^{[n]} \to E'^{(n)}$ admits the natural factorization 
\begin{equation}\label{factorization_Hilb}
A^{[n]} \xrightarrow{f} A^{(n)} \xrightarrow{g} E'^{(n)}
\end{equation}
where the Hilbert--Chow morphism $f$ is semismall \cite{Go2}. There are canonical morphisms
\[
\kappa_\nu: A^{(\nu)} \to A^{(n)}
\]
together with a canonical stratification of $A^{(n)}$ indexed by the partitions $\nu$ of $n$,
\[
A^{(n)} = \bigcup_\nu A_\nu, \quad \overline{A_\nu} = \mathrm{Im}(\kappa_\nu) \subset A^{(n)}.
\]
Note that  the resulting morphism $\kappa_\nu:A^{(\nu)} \to \overline{A_\nu}$ is the normalization of the target,
so that 
\begin{equation}\label{hisih}
H^*(A^{(\nu)},\BQ)= {\rm IH}^*(\overline{A_\nu},\BQ).
\end{equation}

By \cite[Theorem 3]{Go2}, the decomposition theorem associated with $f$ takes the form
analogous to (\ref{decomp9}),
\[
Rf_* \BQ_{A^{[n]}} = \bigoplus_{\nu} \mathrm{IC}_{\overline{A_\nu}} [-2n],
\]
and the restriction of $g: A^{(n)} \to E'^{(n)}$ to each $\overline{A_\nu}$ is described by the commutative diagrams
\begin{equation}\label{diagram9}
\begin{tikzcd}
A^{(\nu)} \arrow{r}{\kappa_\nu}  \arrow[swap]{d} &  \overline{A_\nu}\arrow[r, hookrightarrow]  \arrow[swap]{d}{p_\nu} &  A^{(n)}  \arrow{d} \\
{E'}^{(\nu)} \arrow{r} & \overline{{E'_\nu}} \arrow[r, hookrightarrow] & E'^{(n)}.
\end{tikzcd}
\end{equation}

We consider the $p$-ample class
\[
\alpha = \mathrm{pt} \otimes 1_{E'} \in H^2(E, \BQ)\otimes H^0(E', \BQ) \subset H^2(A, \BQ)
\]
of $p:A \to E'$ which induces a Lefschetz class 
\begin{equation}\label{alpha(n)}
\alpha^{(n)} \in H^2(A^{(n)}, \BQ)
\end{equation}
with respect to $A^{(n)} \to E'^{(n)}$. It further induces a Lefschetz class
\[
\alpha^{(\nu)} \in H^2(A^{(\nu)}, \BQ)
\]
with respect to $A^{(\nu)} \to E'^{(\nu)}$ for every partition $\nu$ of $n$. By the diagram (\ref{diagram9}), the pullback of $\alpha^{(n)}$ to every $A^{(\nu)}$ via $\kappa_\nu$ coincides with $\alpha^{(\nu)}$.

By keeping in mind that  ${\rm codim}_{A^{(n)}} \overline{A_\nu}=2n -2l(\nu)$, that the defects of semismallness of the morphisms  $p_n$ and $p_\nu$ are $n$ and $l(\nu)$, respectively, and the identity (\ref{hisih}),
we obtain the following corollary by applying Proposition \ref{proposition3.5} to the factorization (\ref{factorization_Hilb}).
}

\begin{cor}\label{cor3.6}
The decomposition (\ref{canonical_split}) is the first Deligne splitting induced by the 
$(p_n: A^{[n]} \to E'^{(n)})$-Lefschetz class  ($f$ as in (\ref{factorization_Hilb}), $\alpha^{(n)}$
as in (\ref{alpha(n)}))
\[
\eta_{A^{[n]}} = f^*\alpha^{(n)} \in H^2(A^{[n]}, \BQ).
\]
\end{cor}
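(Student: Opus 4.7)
The plan is to apply Proposition \ref{proposition3.5} to the factorization (\ref{factorization_Hilb}) and then reduce the resulting identification to the base case of the trivial fibration $p \colon A = E \times E' \to E'$.

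The class $\alpha^{(n)} \in H^2(A^{(n)}, \BQ)$ is $g$-ample since $\alpha$ is $p$-ample on $A$; by diagram (\ref{diagram9}) it restricts, via $\kappa_\nu$, to $\alpha^{(\nu)} \in H^2(A^{(\nu)}, \BQ)$, which is a Lefschetz class for the product fibration $A^{(\nu)} \to E'^{(\nu)}$. Using $\mathrm{IH}^\ast(\overline{A_\nu}, \BQ) = H^\ast(A^{(\nu)}, \BQ)$ and computing defects of semismallness ($n$ for $p_n$ and $l(\nu)$ for $A^{(\nu)} \to E'^{(\nu)}$) and codimensions ($c_\nu = 2n - 2l(\nu)$), Proposition \ref{proposition3.5} yields
\begin{equation*}
G_k H^d(A^{[n]}, \BQ) \;=\; \bigoplus_{\nu} G_{k - n + l(\nu)} \, H^{d - 2n + 2l(\nu)}(A^{(\nu)}, \BQ),
\end{equation*}
where $G_\ast H^\ast(A^{(\nu)}, \BQ)$ denotes the first Deligne splitting associated with $\alpha^{(\nu)}$. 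A direct comparison with (\ref{cansplitting15}) then reduces the corollary to the claim that this first Deligne splitting coincides with the Künneth splitting $\widetilde{G}_\ast H^\ast(A^{(\nu)}, \BQ)$ of Section \ref{Section1.5}.

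To prove this reduced statement I would propagate the identification from the trivial fibration $p \colon A \to E'$ upward: $A^{(\nu)} = \prod_i A^{(a_i)}$ with $\alpha^{(\nu)} = \sum_i q_i^\ast \alpha^{(a_i)}$, each $A^{(a)} \to E'^{(a)}$ is the $\mathfrak{S}_a$-quotient of $A^a \to E'^a$, and each factor is $p \colon A \to E'$ itself. Since the construction of Section \ref{First Deligne Splitting} depends only on the graded $\BQ$-algebra $H^\ast$, the perverse filtration, and the cup-product action of $\eta$, it automatically descends to $\mathfrak{S}_a$-invariants. The substantive step is Künneth-compatibility for classes of the form $\eta_1 \otimes 1 + 1 \otimes \eta_2$ on a product of fibrations; the graded Lefschetz $\mathfrak{sl}_2$-representation tensor-factors, its primitive decomposition follows via Clebsch--Gordan, and one then verifies that the iterated-kernel description of $Q_k \subset P_k H^\ast$ respects this tensor factorization. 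For the base case $p \colon A \to E'$, the Künneth splitting $H^i(E) \otimes H^{d-i}(E')$ already splits the perverse filtration by Section \ref{Section1.5}; moreover, since $\alpha^2 = 0$ in $H^\ast(A)$, all higher-order constraints in Section \ref{First Deligne Splitting} are vacuously satisfied and $Q_k$ is forced to be this Künneth piece.

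The principal obstacle I anticipate is the Künneth-compatibility step for the first Deligne splitting: while the Clebsch--Gordan picture makes the claim plausible on the graded level, the first Deligne splitting is a \emph{lift} from the associated graded to the filtration, so the verification requires carefully tracking how the iterated kernels of the operators $\Phi_m$ behave under external products. All other ingredients — Proposition \ref{proposition3.5}, the semismallness of $f$ in (\ref{factorization_Hilb}), the decomposition (\ref{decomp9}), and the identification (\ref{hisih}) — are already recorded in the excerpt.
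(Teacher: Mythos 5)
Your proposal follows the paper's own route: the paper's entire argument for this corollary is the application of Proposition \ref{proposition3.5} to the factorization (\ref{factorization_Hilb}), with the numerics ($R=n$ for $p_n$, $R_\nu=l(\nu)$ for $A^{(\nu)}\to E'^{(\nu)}$, ${\rm codim}=2n-2l(\nu)$) and the matching of Lefschetz classes under $\kappa_\nu$ exactly as you set them up. The paper then treats the residual identification --- that the first Deligne splitting of $H^*(A^{(\nu)},\BQ)$ with respect to $\alpha^{(\nu)}$ coincides with the K\"unneth-type splitting $\widetilde G_*$ of Section \ref{Section1.5} --- as immediate and says nothing about it; you are right that the corollary is not literally complete without this step.

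Where you diverge is in how you propose to supply that identification, and your own worry there is well founded but avoidable. Tensor-compatibility of the first Deligne splitting for a class $\eta_1\otimes 1+1\otimes\eta_2$ on a product of two nontrivial perverse-filtered algebras is a genuine claim (the primitive decomposition does not tensor-factor, precisely because of Clebsch--Gordan mixing, so one really would have to chase the iterated kernels of the $\Phi_m$), and you leave it unproven. But in this geometry you never need it: $A^{(\nu)}\to E'^{(\nu)}$ is the quotient by $\prod_k\mathfrak{S}_{a_k}$ of the single \emph{trivial} fibration $A^{l(\nu)}=E^{l(\nu)}\times E'^{l(\nu)}\to E'^{l(\nu)}$, and $\alpha^{(\nu)}$ pulls back to $\bigl(\sum_i\mathrm{pr}_i^*\,\mathrm{pt}\bigr)\otimes 1$, which is again of the form $\eta_F\otimes 1$ for a single trivial fibration $F\times Y\to Y$. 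For such a fibration every ingredient of the construction in Section \ref{First Deligne Splitting} (cup product with $\eta$, the filtration $P$) preserves the K\"unneth bigrading, so each $G_k$ is bigraded; and any bigraded splitting of the bigraded filtration $P_k=\bigoplus_{i\le k}H^i(F)\otimes H^*(Y)$ is forced to equal $H^k(F)\otimes H^*(Y)$, which is exactly $\widetilde G_k$ by the definitions of Section \ref{Section1.5}. Lemma \ref{comparison1}, applied (at the level of the cohomological data) to the $P$-strict injection $H^*(A^{(\nu)},\BQ)\hookrightarrow H^*(A^{l(\nu)},\BQ)$, then transports the identification to the symmetric product with no Clebsch--Gordan analysis. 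With that substitution for your ``principal obstacle,'' your argument is complete and in fact makes explicit a step the paper leaves implicit.
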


\subsection{A strengthened version of Theorem \ref{taut_abelian}}\label{Section3.5}
We study the decomposition (\ref{decomp6}) constructed for Theorem \ref{taut_abelian}. In the special case (\ref{special}), we have
\[
\CM_{\beta, A} \cong A^{[n]} \times \hat{A}, \qquad A=E \times E',
\]
with the morphism $\pi_\beta: \CM_{A,\beta} \to B$ given by the morphism $p_n\times q$ in (\ref{HC2}). Corollary \ref{cor3.6} 
implies that the decomposition (\ref{splitting2}) is the first Deligne splitting associated with the 
$(p_n\times q)$-Lefschetz class 
\[
\eta_{A,\beta} = \eta_{A^{[n]}} \boxtimes 1_{\hat{A}} + 1_{A^{[n]}}\boxtimes \left(1_E \boxtimes \mathrm{pt} \right) \in H^2(A^{[n]} \times \hat{A}, \BQ) =  H^2(\CM_{\beta, A}, \BQ).
\]

By Theorem \ref{thm2.6}, a pair $(A', \beta')$ with $\beta'^2=2n$ is perversely isomorphic to the special pair $(A, \beta)$ given by (\ref{special}). So there is a graded isomorphism
\[
\phi: H^*(\CM_{\beta, A}, \BC) \xrightarrow{\simeq} H^*(\CM_{\beta', A'}, \BC) 
\]
of $\BC$-algebras preserving the corresponding perverse filtrations, and we obtain a decomposition (\ref{decomp6}) as
\begin{equation}\label{996}
\widetilde{G}_kH^d(\CM_{\beta',A'}, \BC) = \phi(\widetilde{G}_kH^d(\CM_{\beta',A'}, \BC));
\end{equation}
see the proof of Proposition \ref{prop2.5}. Hence
\[
\eta_{\beta',A'} = \phi(\eta_{\beta,A}) \in H^2(\CM_{\beta',A'}, \BC)
\]
is a $\pi_{\beta'}$-Lefschetz class, and Lemma \ref{comparison1} implies that the decomposition (\ref{996}) is the first Deligne splitting associated with $\eta_{\beta',A'}$. 

This gives the following strengthened version of Theorem \ref{taut_abelian}, which, we stress, is about any pair $(A,  \beta)$ with $v_\beta = (0, \beta, \chi)$ primitive, not just the special cases (\ref{special}).

\begin{thm}\label{strengthened2.1}
Theorem \ref{taut_abelian} holds for a splitting $\widetilde{G}_\ast H^\ast(\CM_{\beta,A}, \BC)$ given by the first Deligne splitting associated with a {suitable $\pi_\beta$-Lefschetz class $\eta_\beta$}, where $\pi_\beta: \CM_{\beta,A} \to B$  is the morphism (\ref{Hilb-Ch}).
\end{thm}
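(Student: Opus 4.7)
The plan is to reduce to the special case of Section \ref{Sec2.5}, where the moduli space has the explicit form $A^{[n]}\times\hat{A}$, and then transport the conclusion to a general pair via the perverse equivalence of Theorem \ref{thm2.6}.

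First I would identify the splitting $\widetilde{G}_\ast$ constructed in the special case $(A,\beta)=(E\times E',\sigma+nf)$ with a first Deligne splitting. Here $\pi_\beta = p_n\times q : A^{[n]}\times\hat{A}\to E'^{(n)}\times E$. For the $A^{[n]}$ factor, Corollary \ref{cor3.6} already identifies $\widetilde{G}_\ast H^\ast(A^{[n]},\BQ)$ with the first Deligne splitting of $p_n$ for the $p_n$-Lefschetz class $\eta_{A^{[n]}}=f^\ast\alpha^{(n)}$. For the trivial fibration $q:\hat{A}\to E$, the analogous Künneth splitting $G'_\ast H^\ast(\hat{A},\BQ)$ is manifestly the first Deligne splitting associated with the class $1_E\boxtimes\mathrm{pt}\in H^2(\hat{A},\BQ)$. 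Taking the sum
\[
\eta_{A,\beta}=\eta_{A^{[n]}}\boxtimes 1_{\hat{A}}+1_{A^{[n]}}\boxtimes(1_E\boxtimes\mathrm{pt})\in H^2(\CM_{\beta,A},\BC),
\]
and invoking Proposition \ref{proposition3.5} together with the compatibility of first Deligne splittings with products of fibrations, I expect the first Deligne splitting for $\eta_{A,\beta}$ to coincide with the Künneth splitting $\widetilde{G}_\ast H^\ast(\CM_{\beta,A},\BC)$ of (\ref{splitting2}).

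Next I would transport this to an arbitrary pair $(A',\beta')$ with $\beta'^2=2n$. By Theorem \ref{thm2.6} there is a graded $\BC$-algebra isomorphism $\phi:H^\ast(\CM_{\beta,A},\BC)\to H^\ast(\CM_{\beta',A'},\BC)$ that preserves perverse filtrations (which follows from Proposition \ref{prop2.4} together with $\phi(L_\beta)=L_{\beta'}$). Define the splitting on the target as in (\ref{996}) and put $\eta_{\beta',A'}:=\phi(\eta_{\beta,A})$. Since $\phi$ is a graded algebra isomorphism preserving the perverse filtration, it induces an algebra isomorphism on the associated graded $\mathrm{Gr}^P_\ast$ that intertwines cup product by $\eta_{\beta,A}$ with cup product by $\eta_{\beta',A'}$; the hard-Lefschetz isomorphisms (\ref{pi-lefschetz}) for $\eta_{\beta,A}$ therefore transfer, and $\eta_{\beta',A'}$ is a $\pi_{\beta'}$-Lefschetz class. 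Applying Lemma \ref{comparison1} with these Lefschetz classes gives $\phi(G_kH^\ast(\CM_{\beta,A},\BC))\subset G_kH^\ast(\CM_{\beta',A'},\BC)$, and since $\phi$ is bijective and the $G_k$ inject into the same graded quotients of the perverse filtration, equality holds. Combined with the tautological-class and multiplicativity statements already transported in the proof of Proposition \ref{prop2.5}, this yields the theorem.

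The main obstacle is the explicit identification in the special case: checking that the inductively defined Künneth splitting (\ref{splitting2}), built from symmetric products via the Göttsche decomposition, really is the first Deligne splitting of a single global Lefschetz class on $\CM_{\beta,A}$. This reduces, via Proposition \ref{proposition3.5} applied to the semismall factorization $A^{[n]}\to A^{(n)}\to E'^{(n)}$, to a statement that first Deligne splittings are compatible with products of morphisms and with Künneth decompositions of the cohomology of symmetric products; once this compatibility is in place, the remaining steps are purely formal consequences of the functoriality recorded in Lemma \ref{comparison1}.
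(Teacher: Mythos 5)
Your proposal is correct and follows essentially the same route as the paper: reduce to the special pair $(E\times E',\sigma+nf)$, where Corollary \ref{cor3.6} (extended across the product with the trivial fibration $q:\hat{A}\to E$) identifies the splitting (\ref{splitting2}) as the first Deligne splitting of $\eta_{A,\beta}$, then transport both the splitting and the Lefschetz class through the perverse isomorphism $\phi$ of Theorem \ref{thm2.6} and conclude via Lemma \ref{comparison1}. The product-compatibility of first Deligne splittings that you flag as the main obstacle is the standard fact the paper implicitly uses (cf.\ the appendix of \cite{dC}), so no new idea is needed there.
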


\section{Topology of Hitchin fibrations}\label{se4}

\subsection{Overview} Throughout the section, we assume $C$ is a nonsingular projective integral curve of genus $g$ embedded into an abelian surface $A$. We study {a kind of} specialization morphism
\begin{equation}\label{sp}
    \mathrm{sp}^!: H^*(\CM_{\beta, A}, \BQ) \to H^*(\CM_{\mathrm{Dol}}, \BQ)
\end{equation}
where $\CM_{\mathrm{Dol}}$ is the moduli of stable Higgs bundles of rank $r$ and Euler characteristic $\chi$, and 
\[
\beta = r[C] \in H_2(A, \BZ).
\]
Then we deduce Theorems \ref{genus_2}, \ref{P=W_even}, and \ref{P=W_odd} from Theorem \ref{taut_abelian} via the properties of the  morphism (\ref{sp}) which we establish hereafter.

\subsection{Deformation to the normal cone}\label{Section 4.2}

The moduli space $\CM_{\mathrm{Dol}}$ of stable Higgs bundles with rank $r$ and Euler characteristic $\chi$ can be realized as the moduli space of $1$-dimensional Gieseker-stable sheaves $\CF$ on the cotangent bundle  surface $T^*C$ with (cf. see \cite{BNR})
\[
 [\mathrm{supp}(\CF)] = r[C] \in H_2(T^*C, \BZ), \quad \chi(\CF) = \chi.
\]
In the following, we describe $\CM_{\mathrm{Dol}}$ as the ``limit'' of a trivial family of $\CM_{\beta,A}$. A similar construction using $K3$ surfaces was considered in \cite{DLL}.

Assume $T = \BP^1$ and $T^\circ = \BP^1 \setminus 0 = \BC$. Let 
\begin{equation}\label{normal_cone}
 \CS = \mathrm{Bl}_{C\times 0}(A \times T) \setminus (A\times 0) \to T
\end{equation}
be the total space of the deformation to the normal cone associated with the embedding $j_C:C\hookrightarrow A$. The central fiber of (\ref{normal_cone}) is
\[
\CS_0 = T^*C \to 0 \in T,
\]
and the restriction over $T^\circ$ is a trivial fibration
\[
A \times T^\circ \to  T^\circ \subset T.
\]

We associate to $\beta$ a family of homology classes 
\begin{equation}\label{beta_t}
\beta_t = r[C] \in H_2(\CS_t, \BZ).
\end{equation}
Let $\CM \rightarrow T$ be the (coarse) relative moduli space which parametrizes, for each $t \in T$, pure one-dimensional Gieseker-stable sheaves $\CF$ on $\CS_t$ with $\chi(\CF) =\chi$ and such that the support of $\CF$ is a proper subscheme in the class $\beta_t$. Similarly, let $\CB \rightarrow T$ denote the component of the relative Hilbert scheme which parametrizes Cartier divisors in $\CS_t$ with proper support in the class $\beta_t$.  Following \cite[Section 5.3]{GIT}, we have a proper morphism 
\begin{equation*}
\begin{tikzcd}
\CM \ar [rr,"h_T"]\ar[rd]& & \CB\ar[ld]\\
 &T&
\end{tikzcd}
\end{equation*}  
over the base $T$, which, on the level of points, sends a sheaf $\CF$ on $\CS_t$ to its Fitting support, viewed as an element in $\CB_t$.
Clearly both $\CM$ and $\CB$ become trivial families when restricted over $T^\circ$,
\begin{equation*}
\begin{tikzcd}
\CM_{\beta,A} \times T^\circ \ar [rr,"h_{T^\circ}"]\ar[rd]& & B \times T^\circ \ar[ld]\\
 &T^\circ&.
\end{tikzcd}
\end{equation*}  
The fibers over $0 \in T$ recover the moduli space $\CM_{\mathrm{Dol}}$, the Hitchin base $\Lambda$, and the Hitchin fibration (\ref{Hitchin_fib}).

The following lemma seems standard; we include a brief proof  since we are not aware of an adequate reference.
\begin{lem}\label{lem4.1}
\begin{enumerate}

\item[(i)] Both $\CM$ and $\CB$ are irreducible varieties and smooth over $T$.
\item[(ii)] There exists a universal class 
\[
[\CF_T] \in K_{\mathrm{top}}(\CM\times_{T} \CS)
\]
whose restriction to each fiber gives a universal class for $\CM_t \times \CS_t$.
\end{enumerate}

\end{lem}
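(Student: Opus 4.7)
The plan is to establish smoothness of $\CM\to T$ and $\CB\to T$ via standard deformation theory exploiting the fact that each fiber of $\CS\to T$ is a smooth quasi-projective surface with trivial canonical bundle, then to deduce irreducibility of the total spaces from smoothness and irreducibility of a single fiber, and finally to construct the universal topological class by applying Markman's construction \cite{Markman5} in families.

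First I would observe that $\CS\to T$ is smooth of relative dimension $2$: this is immediate from the blow-up construction (\ref{normal_cone}), since $\CS$ is obtained from $A\times T$ by blowing up the smooth codimension-$2$ subvariety $C\times 0$ and then removing a smooth divisor. Because $K_A=0$, adjunction yields $N_{C/A}=\Omega_C$, so $\CS_0\cong T^*C$; in particular $\omega_{\CS_t}\cong \CO_{\CS_t}$ for every $t\in T$, and hence $\omega_{\CS/T}$ is fiberwise trivial. For the smoothness of $\CM\to T$ at a point $[\CF]\in \CM_t$, the relative tangent-obstruction theory is governed by $\Ext^i_{\CS_t}(\CF,\CF)$ for $i=1,2$. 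Since $\CF$ has proper support, Serre duality (with supports at $t=0$) together with $\omega_{\CS_t}\cong\CO$ gives $\Ext^2(\CF,\CF)\cong\Hom(\CF,\CF)^\vee\cong\BC$ using that $\CF$ is simple, and the trace map identifies this $\BC$ with itself, so the traceless obstruction space $\Ext^2(\CF,\CF)_0$ vanishes; this is the relative Mukai-type argument. A parallel computation for $\CB\to T$, using $H^0(D,N_{D/\CS_t})$ and $H^1(D,N_{D/\CS_t})$ at a Cartier divisor $D$, gives smoothness.

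For irreducibility, once $\CM\to T$ is smooth and $T=\BP^1$ is irreducible, it suffices to check that the generic fiber $\CM_{\beta,A}$ is irreducible; this is known, since for our choice of a generic polarization and the primitive Mukai vector $v_\beta=(0,\beta,\chi)$, the moduli space $\CM_{\beta,A}$ is deformation equivalent to $A^{[n]}\times\hat{A}$ and is in particular irreducible by \cite{Yo}. The same strategy applies to $\CB$: it is smooth over $T$ by the argument above, and its generic fiber is the connected Chow variety of effective curves in class $\beta$ on $A$.

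For part (ii), the plan is to apply Markman's construction from \cite[Section 3.1]{Markman5} relatively over $T$. On each étale chart of $\CM$ over which the Brauer obstruction of the moduli problem trivializes, a genuine universal family exists; two such choices differ by pullback of a line bundle, and the induced classes in topological $K$-theory agree modulo such twists. By Atiyah's observation that torsion gerbes are trivializable in topological $K$-theory, one can glue these local classes to a global class $[\CF_T]\in K_{\mathrm{top}}(\CM\times_T \CS)$ whose restriction to each fiber $\CM_t\times\CS_t$ is a universal class in the sense of Section~\ref{Sec2.1}. The main technical obstacle is the verification of smoothness of $\CM\to T$ at points of the central fiber $\CM_0=\CM_{\mathrm{Dol}}$, where $\CS_0=T^*C$ is non-compact and Serre duality must be applied with compact supports to the sheaf $\CF$, which has proper one-dimensional support; once this is done, the remaining assertions follow formally.
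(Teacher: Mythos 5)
Your part (ii) is essentially the paper's argument: Markman's gerbe--trivialization from \cite[Section 3.1]{Markman5} applied in the family, carried out in the paper by compactifying $\CS$ inside $\CS'=\mathrm{Bl}_{C\times 0}(A\times T)$ and producing a topological $K$-theory class, pulled back from $A$, whose pairing with $(0,r[C],\chi)$ equals $1$. (Your blanket appeal to ``torsion gerbes are trivializable in topological $K$-theory'' is not correct as a general statement and is not what is used; the trivialization comes precisely from the existence of a class of pairing $1$.) The genuine gap is in part (i), in the step you yourself flag as the main obstacle. On $\CS_0=T^*C$, Serre duality with proper supports does give $\Ext^2_{\CS_0}(\CF,\CF)\cong\Hom(\CF,\CF)^*\cong\BC$, but $H^2(T^*C,\CO_{T^*C})=0$ (the surface is the total space of a line bundle over a curve), so the trace map $\Ext^2(\CF,\CF)\to H^2(\CO_{\CS_0})$ is the \emph{zero} map and its kernel, the traceless obstruction space, is all of $\Ext^2(\CF,\CF)\cong\BC$: it does not vanish. (Concretely, for $\CF$ a line bundle on a smooth spectral curve $\Sigma$ one has $\ext^1(\CF,\CF)\cong N_{\Sigma/T^*C}\cong\omega_\Sigma$, whence $H^1(\Sigma,\omega_\Sigma)=\BC$ contributes to $\Ext^2$.) The same failure occurs for $\CB$: $N_{D/\CS_0}\cong\omega_D$, so $H^1(D,N_{D/\CS_0})\cong H^0(D,\CO_D)^*\neq 0$. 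The sentence ``the trace map identifies this $\BC$ with itself'' is exactly the feature of the \emph{compact} fibers $\CS_t=A$ that is lost at $t=0$. Smoothness of $\CM_{\mathrm{Dol}}$ and of $\Lambda$ is of course true, but not for this reason; and, more importantly, fiberwise smoothness alone does not give smoothness of $\CM\to T$ --- one still needs flatness, i.e.\ that no irreducible component of $\CM$ sits inside $\CM_0$, which is the actual content to be proved.

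The paper argues in the opposite order and avoids deformation theory entirely. Since $\CM_0$ is the zero locus of one function on $\CM$, every component of $\overline{\CM^\circ}\cap\CM_0$ has dimension at least $\dim\CM_{\beta,A}=\dim\CM_{\mathrm{Dol}}$; this intersection is nonempty because a nonsingular curve in class $r[C]$, equipped with a line bundle of Euler characteristic $\chi$, deforms off the central fiber; and $\CM_0$ is irreducible of exactly that dimension, so $\CM_0\subset\overline{\CM^\circ}$ and $\CM$ is irreducible. Irreducibility forces flatness over the nonsingular curve $T$, and flatness together with nonsingular fibers gives smoothness; irreducibility of $\CB$ and $\CM$ is obtained along the way rather than deduced afterwards. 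If you insist on proving smoothness first, you would need an actual semiregularity argument showing that the obstruction \emph{class} (the product of the Atiyah and Kodaira--Spencer classes in $\Ext^2(\CF,\CF)$) vanishes, not that the obstruction space does; the dimension count is much cheaper.
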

\begin{proof}

For part (i), we first argue for $\CB$. For notational convenience, we define
\[
\tilde{g} = \mathrm{dim}(\CB_t) = r^2(g-1)+1, \quad \forall~t \in T.
\]
We observe that the closure $\overline\CB^{\circ}$ in $\CB$ of 
\[
\CB^{\circ} = B \times T^\circ
\]
is irreducible of dimension $\tilde{g}+1$, so that the intersection 
\[
\overline{\CB^{\circ}}\cap \CB_0 \subset \CB_0
\]
must have at least dimension $\tilde{g}$ at every closed point.  This intersection is non-empty since it contains the divisor $r[C]$ which clearly deforms to the generic fiber. Since $\CB_0$ is irreducible of dimension $\tilde{g}$, we have $\CB_0 \subset \overline{\CB^{\circ}}$. Therefore $\CB$ is irreducible. This implies the flatness of $\CB$ over the nonsingular curve $T$. Furthermore, since its fibers are all nonsingular, the flatness further implies that the morphism $\CB \to T$ is smooth.

The same argument applies for $\CM$. The only thing to check is that there exists a point in the central fiber $\CM_0$ which deforms to the generic fiber.  By the smoothness of $\CB/T$,  we can choose a nonsingular curve $Z_0 \subset \CS_0$ which deforms to $Z_t \subset \CS_t$.  Any line bundle on $Z_0$ with Euler characteristic $\chi$ can be deformed as well and this gives the desired point in $\CM_0$.

For part (ii), we follow the same argument from \cite[Section 3.1]{Markman5}. We denote by $\mathfrak{M}$ the relative moduli stack of stable sheaves on $\CS$, which is a $\mathbb{G}_m$-gerbe over $\CM$. It suffices to show that this gerbe is \emph{topologically} trivializable, so that we can pull back the topological $K$-theory class of the universal family on $\mathfrak{M}\times_T\CS$ to $\CM\times_T \CS$ via a section.  To construct a trivialization, we take the nonsingular 3-fold $\CS'= \mathrm{Bl}_{C\times 0}(A \times T)$ which contains $\CS$ as an open subset,
\[
\CS \subset \CS'.
\] 
Let $\CM_{\CS'}$ be the (coarse) moduli space of stable sheaves on $\CS'$ with Chern character given by $w =\mathrm{ch}({i_t}_\ast \CF) \in H^*(\CS', \BQ)$, where 
\[
i_t: \CS_t \xrightarrow{\simeq} A\times t \subset \CS', \quad t\neq 0
\]
is the inclusion of a nonsingular fiber, and $\CF \in \CM$ is a coherent sheaf supported on $\CS_t$. The moduli space $\CM_{\CS'}$ contains $\CM$ as an open subvariety, therefore it is enough to show that the corresponding $\mathbb{G}_m$-gerbe $\mathfrak{M}_{\CS'} \rightarrow \CM_{\CS'}$ is topologically trivial. As in \cite[Section 3.1]{Markman5}, a trivialization can be constructed with a topological K-theory class in $\CS'$ whose pairing with $w$ is $1$. For this, we can use the pullback to $\CS'$ of a class on $A$, whose pairing with $(0 ,r[C], \chi)$ is $1$, via the natural projection
\[
\CS' = \mathrm{Bl}_{C\times 0}(A\times T) \to A. \qedhere
\]
\end{proof}

\subsection{Specializations}\label{section4.3} Specialization morphisms with respect to perverse filtrations have been studied systematically in \cite{dCM, sp}. We provide a concrete description of the specialization morphism in our setting as follows.

As before, we assume $T = \BP^1$ and $T^\circ = \BP^1 \setminus 0$. Let $f: W \to T$ be a smooth morphism, whose restriction over $T^\circ$ is a trivial product
\[
f^\circ: W^\circ = W_t \times T^\circ \rightarrow T^\circ, \quad \forall~ t\neq 0,
\]
with $W_t$ proper.
By Deligne's Global Invariant Cycle Theorem, see  \cite[Theorem 1.7.1]{dCM1} for example, the restriction 
\[
\mathrm{res}_t:  H^*(W, \BQ) \to  H^*(W^\circ,\BQ)=H^*(W_t, \BQ)
\]
is surjective for $t \neq 0$. Let 
\[
\alpha_1, \alpha_2 \in H^*(W, \BQ)
\]
be two liftings of a class $\alpha \in H^*(W_t, \BQ)$ with $t\neq 0$. Since $H^*(W^\circ,\BQ)=H^*(W_t,\BQ)$, the long exact sequences for the pairs $(W,W_t)$ and $(W, W^\circ)$
are isomorphic. Since $W$ is nonsingular, the relative cohomology for $(W,W^\circ)$ can be identified with the Borel-Moore homology of $W_0$. It follows that
\begin{equation}\label{en52}
\alpha_1 - \alpha_2 = {i_0}_* \gamma, 
\end{equation}
for some class $\gamma$
 in the Borel--Moore homology of $W_0$.  Here $i_0: W_0 \hookrightarrow W$ is the closed embedding of the central fiber, and we identify the Borel--Moore homology and the cohomology of the nonsingular $W$ in the equation (\ref{en52}). The excess intersection formula \cite[Corollary 6.3]{Fulton}, {together with the triviality of the normal bundle $N_{W_0/W}$,} implies that we have
 \[
\mathrm{res}_0(\alpha_1) - \mathrm{res}_0(\alpha_2) = i_0^*{i_0}_* \gamma = c_1(N_{W_0/W})\cap \gamma =0.
\]
We define the \emph{specialization morphism}\footnote{In the literature, often, one defines a specialization morphism  $\mathrm{sp}^*$ in the opposite direction, as it is dictated by the morphism $\sigma: i_0^* \to \psi_f$ with $\psi_f$ nearby cycle-functor. In the present set-up, since $W_0$ is not proper, such an arrow $sp^*$ does not exist; 
by dualizing $\sigma$, one sees that an arrow in the opposite direction always exists, but if $W_0$ is singular, for example, this arrow is not a familiar object.}
\begin{equation}\label{sp!}
\mathrm{sp}^!: H^*(W_t, \BQ) \rightarrow H^*(W_0, \BQ), \quad t\neq 0
\end{equation}
as
\begin{equation}\label{sp1}
\mathrm{sp^!}(\alpha) = \mathrm{res}_0(\tilde{\alpha}) \in H^*(W_0, \BQ)
\end{equation}
where $\tilde{\alpha} \in H^*(W, \BQ)$ is any lifting of $\alpha$. The discussion above implies that the class (\ref{sp1}) does not depend on the lifting $\tilde{\alpha}$. Since $\mathrm{res}_0$ is a homomorphism of $\BQ$-algebras, so is $\mathrm{sp}^!$  (\ref{sp!}).

\begin{example}\label{example1}
Let $\CS \to T$ be the relative surface (\ref{normal_cone}) associated with the embedding $j_C: C \hookrightarrow A$. We have the specialization morphism
\begin{equation}\label{2333}
\mathrm{sp}^!: H^*(A, \BQ) \to H^*(T^*C, \BQ) = H^*(C, \BQ).
\end{equation}
By the definition of $\mathrm{sp}^!$, the morphism (\ref{2333}) is given by the pullback along
\[
T^*C  \hookrightarrow \BP(T^*C \oplus \CO_C) \hookrightarrow \mathrm{Bl}_{C\times 0}(A \times T) \to A\times T \to A.
\]
In particular, we have
\[
\mathrm{sp}^!(\gamma)  = j_C^* \gamma \in H^*(T^*C, \BQ) = H^*(C, \BQ),\quad \forall~ \gamma \in H^*(A, \BQ). 
\]
\end{example}

Now we discuss the interaction between $\mathrm{sp}^!$ and perverse filtrations. Let $W/T$ be as above. We consider a commutative diagram

\begin{equation*}
\begin{tikzcd}
W \ar [rr,"h"]\ar[rd]& & V\ar[ld]\\
 &T&
\end{tikzcd}
\end{equation*}  
where the morphism $h: W \to V$ is proper of relative dimension $d$. For every $t \in T$, there is a perverse filtration $P_\star H^*(W_t, \BQ)$ associated with the morphism $h_t : W_t \to V_t$.

\begin{prop}\label{Prop3.1}
The specialization morphism (\ref{sp!}) preserves\footnote{{In fact, the proof shows more, namely that $\mathrm{sp}^!$ is filtered strict.}} the perverse filtrations, \emph{i.e.}, 
\[
\mathrm{sp}^!\left( P_kH^*(W_t, \BQ ) \right) \subset P_k H^*(W_0, \BQ), \quad \forall k.
\]
\end{prop}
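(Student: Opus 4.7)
The plan is to introduce a master perverse filtration on $H^\ast(W, \BQ)$ associated to the proper morphism $h: W \to V$, through which the specialization map factors compatibly. Applying the decomposition theorem to $h$, we define $P_k H^\ast(W, \BQ)$ via $^\mathfrak{p}\tau_{\le k}$ applied to $Rh_\ast \BQ_W[\dim W - R]$. The proposition then reduces to the following two statements: (a) for every $s \in T$, the restriction $i_s^\ast: H^\ast(W, \BQ) \to H^\ast(W_s, \BQ)$ satisfies $i_s^\ast(P_k H^\ast(W, \BQ)) \subseteq P_k H^\ast(W_s, \BQ)$; and (b) for $t \in T^\circ$, every class $\alpha \in P_k H^\ast(W_t, \BQ)$ admits a lift $\tilde\alpha \in P_k H^\ast(W, \BQ)$ with $i_t^\ast \tilde\alpha = \alpha$. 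Granting (a) and (b), for $\alpha \in P_k H^\ast(W_t, \BQ)$ we first lift to $\tilde\alpha \in P_k H^\ast(W, \BQ)$ via (b); then $\mathrm{sp}^!(\alpha) = i_0^\ast \tilde\alpha \in P_k H^\ast(W_0, \BQ)$ by (a) at $s = 0$. The filtered strictness noted in the footnote is an immediate byproduct.

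For (a), the key input is proper base change $j_s^\ast Rh_\ast \BQ_W \simeq Rh_{s\ast} \BQ_{W_s}$, combined with the right $t$-exactness of $j_s^\ast$ for the perverse $t$-structure, since $j_s: V_s \hookrightarrow V$ is a closed embedding. The shifts work out because $W/T$ is smooth, giving $\dim W = \dim W_s + 1$, and the defect of semismallness is constant over the smooth locus $T^\circ$ (for $s=0$, one verifies $R_0 \le R$, which only refines the containment). This produces a natural comparison between ${}^\mathfrak{p}\tau_{\le k}$ applied on $V$ and on $V_s$, and taking $H^\ast$ yields the desired containment.

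For (b), one exploits the triviality $W^\circ \cong W_t \times T^\circ$: the class $\alpha$ pulls back to $\alpha \boxtimes 1 \in H^\ast(W^\circ, \BQ) = H^\ast(W_t, \BQ) \otimes H^\ast(T^\circ, \BQ)$, and a K\"unneth-type analysis for the perverse filtration of $h^\circ$ places this in $P_k H^\ast(W^\circ, \BQ)$. The Global Invariant Cycle Theorem \cite[Theorem 1.7.1]{dCM1} then provides some lift to $H^\ast(W, \BQ)$; a filtered-strict refinement, arising from the semisimplicity of the perverse direct image $Rh_\ast \BQ_W[\dim W - R]$ (equivalently, the $E_2$-degeneration of the associated perverse Leray spectral sequence), allows us to choose the lift inside $P_k H^\ast(W, \BQ)$. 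The main obstacle will be this strictness statement in (b), since it requires strengthening the usual Global Invariant Cycle Theorem to a statement compatible with the perverse truncations. Alternatively, one can appeal directly to the general machinery for specializations of perverse filtrations developed in \cite{dCM}.
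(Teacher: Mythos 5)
Your overall architecture is the same as the paper's: everything is driven by the decomposition theorem for $h: W \to V$ on the total space, and the content of the proposition is reduced to controlling how the perverse summands behave under restriction to the fibers $V_s$. However, the justification you give for step (a) at the central fiber does not close. For a closed embedding of a Cartier divisor $j_0: V_0 \hookrightarrow V$, the pullback $j_0^\ast \mathcal{P}$ of a perverse sheaf $\mathcal{P}$ lies in perverse degrees $[-1,0]$, and right $t$-exactness only controls the \emph{upper} end of this range. If you write $Rh_\ast \BQ_W[\dim W - R] \simeq \bigoplus_i \mathcal{P}^i[-i]$ and track the normalizations ($\dim W = \dim W_0 + 1$, together with $R_0 \le R$), the summand $\mathcal{P}^i$ restricts into perverse degrees $\{\,i - R + R_0,\ i+1-R+R_0\,\}$ of the correctly normalized complex on $V_0$; right $t$-exactness alone therefore yields only
\[
i_0^\ast\bigl(P_k H^\ast(W, \BQ)\bigr) \subseteq P_{k+1-R+R_0} H^\ast(W_0, \BQ),
\]
which in the relevant case $R = R_0$ is $P_{k+1}$, one step too weak. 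The same single-degree issue reappears in your step (b): to lift $P_k H^\ast(W_t,\BQ)$ into $P_k H^\ast(W,\BQ)$ (rather than into $P_{k+1}$) you must know that the $i$-th summand of $Rh_{t\ast}\BQ_{W_t}$ is exactly the restriction of $\mathcal{P}^i$ sitting in a single perverse degree; over $T^\circ$ this does follow from the product structure (your K\"unneth remark), but it is not a consequence of semisimplicity or $E_2$-degeneration alone.

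The missing ingredient — and the engine of the paper's proof — is that the smoothness of $W \to T$ forces the vanishing cycles to vanish, $\varphi(Rh_\ast \BQ_W) = 0$, whence by \cite[Corollary 3.1.6]{dCM} each summand satisfies $\mathrm{res}_s^\ast \mathcal{P}^i[-1] \in \mathrm{Perv}(V_s)$ for \emph{every} $s \in T$, including $s=0$. This pins each restricted summand to a single perverse degree, so the fixed decomposition of $Rh_\ast\BQ_W$ restricts to a splitting of the perverse filtration on every fiber simultaneously; the containment $\mathrm{sp}^!(P_k) \subseteq P_k$, the surjectivity needed in your step (b), and the filtered strictness asserted in the footnote all follow at once. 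Your closing sentence (``appeal directly to the machinery of \cite{dCM}'') is indeed where the fix lives, but as written your proposal does not identify this specific statement, and without it the $t$-exactness argument for (a) is off by one.
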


\begin{proof}
We show that there exist splittings of the perverse filtrations
\[
H^*(W_t, \BQ) = \bigoplus_{i}G_iH^*(W_t, \BQ), \quad \forall~ t \in T
\]
such that the splitting is constant for $t \in T^o$, and
\begin{equation*}
\mathrm{sp}^!\left(G_iH^*(W_t ,\BQ)\right) \subset G_iH^*(W_0, \BQ).
\end{equation*}

We apply the decomposition theorem \cite{BBD} to the morphism $h: W \to V$, and fix an isomorphism
\[
\phi: Rh_\ast \BQ_W [\dim{W} - d] \xrightarrow{\simeq} \bigoplus_{i=0}^{2d} \CP_V^i [-i].
\]
Here, the $\CP_V^i$ are semisimple perverse sheaves on $V$. By the discussion in Section \ref{sec1.3}, the isomorphism $\phi$ induces a splitting of the perverse filtration on $H^*(W, \BQ)$ associated with $h$. By the smoothness of $f$ and the compatibility of vanishing cycles with the derived pushforward $\mathrm{Rh}_*$, we have $\varphi(\mathrm{Rh}_\ast \BQ_W) = 0$.  As a result, \cite[Corollary 3.1.6]{dCM} shows that the restriction of $\CP_V^i$ to a closed fiber over $t\in T$ is a shifted perverse sheaf,
\[
\CP_{V,t}^i [-1] = \mathrm{res}_t^*\CP_V^i [-1] \in \mathrm{Perv}(V_t), \quad \forall~ t\in T.
\]
Therefore, the restriction $\phi_t$ of the decomposition $\phi$ induces  splittings of the perverse filtration
\[
H^*(W_t, \BQ) = \bigoplus_{i \geq 0} \phi_t (H^*(V_t, \CP_{V,t}^i)), \quad \forall~ t\in T,
\]
and we conclude from the definition of $\mathrm{sp}^!$ that
\[
\mathrm{sp}^! \left(\phi_t(H^k(V_t, \CP_{V,t}^i))\right) \subset \phi_0(H^k(V_0, \CP_{V,0}^i)).
\]
This completes the proof.
\end{proof}

Now we consider the family
\[
h_T: \CM \to  \CB
\]
over $T$ constructed in Section \ref{Section 4.2}, especially Lemma \ref{lem4.1}. Proposition \ref{Prop3.1} implies that the specialization morphism
\[
\mathrm{sp}^!: H^*(\CM_{\beta, A}, \BC) \to H^*(\CM_{\mathrm{Dol}}, \BC)
\]
preserves the perverse filtrations. Let $\widetilde{G}_*H^*(\CM_{A,\beta}, \BQ)$ be a splitting given by Theorem \ref{strengthened2.1}. In particular,  it is the first Deligne splitting induced by a  { $(\pi_\beta=h_{t\neq 0})$-}Lefschetz class\footnote{Note that this class was denoted by $\eta_{\beta',A'}$ in Section \ref{Section3.5}, since $A$ was special there.}
\[
\eta_{\beta,A} \in H^2(\CM_{A,\beta}, \BC).
\]

\begin{prop}\label{prop4.4}
The class
\[
\eta_0 = \mathrm{sp}^!\left( \eta_{\beta,A}\right) \in H^2(\CM_{\mathrm{Dol}}, \BC)
\]
is a Lefschetz class for the Hitchin fibration $h: \CM_{\mathrm{Dol}} \to \Lambda$. Assume further that \[
H^*(\CM_{\mathrm{Dol}}, \BC) = \bigoplus_{k,j}\widetilde{G}_k H^j(\CM_{\mathrm{Dol}}, \BC)\] is the first Deligne splitting associated with $\eta_0$. Then we have {(cf. Theorem \ref{strengthened2.1})}
\begin{equation}\label{prop4.4eq}
\mathrm{sp}^!\left( \widetilde{G}_kH^d(\CM_{A,\beta}, \BC) \right) \subset \widetilde{G}_k H^d(\CM_{\mathrm{Dol}}, \BC).\footnote{Here we use $\BC$-coefficients since the Lefschetz class $\eta_{A,\beta}$ lies in $H^2(\CM_{\beta,A}, \BC)$; see Section \ref{Section3.5}.}
\end{equation}
\end{prop}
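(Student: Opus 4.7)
The plan is to deduce the inclusion from Lemma~\ref{comparison1} applied to $\phi=\mathrm{sp}^!$, so the only missing ingredient is the $h$-Lefschetz property of $\eta_0$, which I will establish via the numerical criterion of Proposition~\ref{prop3.2_Hitchin} and a degeneration argument over the deformation-to-normal-cone family $\CM \to \CB \to T$ of Section~\ref{Section 4.2}.

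First I fix any lift $\tilde{\eta}\in H^2(\CM,\BC)$ of $\eta_{\beta,A}$, so that $\tilde{\eta}|_{\CM_0}=\eta_0$ and $\tilde{\eta}|_{\CM_t}=\eta_{\beta,A}$ for $t\neq 0$. Since $h_T\colon \CM\to\CB$ is proper (Section~\ref{Section 4.2}) and $\CB\to T$ is smooth (Lemma~\ref{lem4.1}), one can choose a local section $s\colon U\to\CB$ on a disk $U\ni 0$ with $s(0)\in\Lambda$ a regular point of the Hitchin base and $s(t)\in B$ generic for $t\neq 0$. The preimage $\Sigma:=h_T^{-1}(s(U))\to U$ is proper and flat with equidimensional fibers of dimension $\tilde g$, $\Sigma_0$ a generic fiber of $h$ and $\Sigma_t$ a generic fiber of $\pi_\beta$. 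Deformation invariance of degrees in a flat proper family then yields
\[
\int_{\Sigma_0}\eta_0^{\tilde g} \;=\; \int_{\Sigma_t}\tilde{\eta}^{\tilde g} \;=\; \int_{F_t}\eta_{\beta,A}^{\tilde g}, \qquad t\neq 0.
\]

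Next I verify that the right-hand side is non-zero. In the special case $(A,\beta)=(E\times E',\sigma+nf)$ of Section~\ref{Sec2.5}, the morphism $\pi_\beta$ is $p_n\times q$ and a generic fiber is the abelian variety $F\cong E^n\times E'$; by its explicit description, $\eta_{\beta,A}|_F$ is the sum of pullbacks of ample classes under the two projections, hence ample on $F$, so $\int_F\eta_{\beta,A}^{\tilde g}>0$. For a general pair $(A,\beta)$, Theorem~\ref{thm2.6} furnishes a perverse isomorphism $\phi\colon H^*(\CM_{\beta,A},\BC)\xrightarrow{\simeq}H^*(\CM_{\beta',A'},\BC)$ to the above special case, preserving the perverse filtration and sending $\eta_{\beta,A}$ to $\eta_{\beta',A'}$. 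The fiber class of a Lagrangian fibration of the projective hyperkähler manifold $\CM_{\beta,A}$ lies in $P_0H^{2\tilde g}$ and (by the support theorem, which forces the bottom perverse summand to be $\mathrm{IC}_B$) spans the one-dimensional bottom graded piece, so $\phi([F_{\beta,A}])$ is a nonzero scalar multiple of $[F_{\beta',A'}]$. Since $\phi$ is a graded ring isomorphism, the non-vanishing of $\int \eta^{\tilde g}\cup [F]$ transfers from $(A',\beta')$ to $(A,\beta)$, completing the verification.

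By Proposition~\ref{prop3.2_Hitchin}, $\eta_0$ is then an $h$-Lefschetz class. The inclusion (\ref{prop4.4eq}) now follows directly from Lemma~\ref{comparison1} applied with $(X_1,f_1,\eta_1)=(\CM_{\beta,A},\pi_\beta,\eta_{\beta,A})$, $(X_2,f_2,\eta_2)=(\CM_{\mathrm{Dol}},h,\eta_0)$, and $\phi=\mathrm{sp}^!$: hypothesis (a) is the preservation of perverse filtrations in Proposition~\ref{Prop3.1}, hypothesis (b) combines $\mathrm{sp}^!(\eta_{\beta,A})=\eta_0$ (definition) with the Lefschetz properties of $\eta_{\beta,A}$ (Theorem~\ref{strengthened2.1}) and $\eta_0$ (just proved). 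The hard part of the plan is the non-vanishing in the general case via the perverse isomorphism; if identification of the bottom graded piece with $\BC\cdot[F]$ is not available as cleanly as advertised, one can alternatively restrict to the Kummer component $K_{\beta,A}$ where $l_\beta=\pi'^{*}_\beta\CO_{\BP^{n-1}}(1)$ is the pullback of an ample class, and perform the fiber-degree computation inside $K_{\beta,A}$ before spreading out via the finite cover in (\ref{diagram1}).
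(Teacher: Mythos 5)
Your overall architecture coincides with the paper's: establish the numerical criterion of Proposition~\ref{prop3.2_Hitchin} for $\eta_0$ by a conservation-of-degree argument along the family, then conclude via Lemma~\ref{comparison1} applied to $\phi=\mathrm{sp}^!$. Where you genuinely diverge is in proving the non-vanishing of $\eta_{\beta,A}^{\tilde g}\cap[F_{\beta,A}]$ for a \emph{general} pair $(A,\beta)$: you compute it explicitly for the split abelian surface and then transport it through the perverse isomorphism of Theorem~\ref{thm2.6}. The paper's argument is a one-liner that makes this detour unnecessary: since $[F_{\beta,A}]\in P_0H^{2\tilde g}(\CM_{\beta,A},\BC)$ by Proposition~\ref{prop1.1} and $\eta_{\beta,A}$ is already known to be a $\pi_\beta$-Lefschetz class for \emph{every} pair $(A,\beta)$ (Theorem~\ref{strengthened2.1}), the map $\eta_{\beta,A}^{\tilde g}\colon \mathrm{Gr}^P_0\to\mathrm{Gr}^P_{2\tilde g}$ is injective, and in degree $H^{4\tilde g}$ its target is the top cohomology, so $\eta_{\beta,A}^{\tilde g}\cup[F_{\beta,A}]\neq 0$ directly. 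Your route buys nothing that is not already in Theorem~\ref{strengthened2.1}, and it costs you an extra input, namely that $\mathrm{Gr}^P_0H^{2\tilde g}$ is one-dimensional and spanned by the fiber class so that $\phi([F_{\beta,A}])$ is a nonzero multiple of $[F_{\beta',A'}]$.

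That extra input is true, but not for the reason you give. There is no ``support theorem'' in the paper forcing $\CP_0=\mathrm{IC}_B$, and in fact $\CP_0$ may a priori contain summands $\mathrm{IC}_Z(L)$ with $Z\subsetneq B$; the correct justification is that such summands cannot contribute to $\mathbb{H}^{\tilde g}(B,\CP_0)$ because $IH^{\tilde g+\dim Z}(Z,L)=0$ once $\dim Z<\tilde g$, leaving only $IH^{2\tilde g}(B)\cong\BQ$ from the $\mathrm{IC}_B$ summand. You would need to supply this (or something equivalent) to make the transfer step rigorous. Finally, your proposed fallback is broken: $l_\beta=\pi_\beta'^{*}\CO_{\BP^{n-1}}(1)$ is pulled back from the base, so its restriction to any fiber of $\pi'_\beta$ vanishes and the ``fiber-degree computation inside $K_{\beta,A}$'' with $l_\beta$ yields $0$, not the desired non-vanishing; you are conflating the base class $L_\beta$ (which computes the perverse filtration and kills fibers) with the Lefschetz class $\eta_{\beta,A}$ (which must be nondegenerate on fibers). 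I recommend replacing the whole middle section of your argument with the paper's direct appeal to the hard Lefschetz property of $\eta_{\beta,A}$ on $\mathrm{Gr}^P_0$.
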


\begin{proof}
Let $F_{\beta,A} \subset \CM_{\beta,A}$ be a 
closed fiber of the morphism $\pi_\beta: \CM_{\beta,A} \to B$. We have
\[
 \mathrm{dim}(F_{\beta,A})= \frac{1}{2}\mathrm{dim}(\CM_{\beta,A}) = \tilde{g}.
\]
Since the fiber class 
\[
[F_{\beta,A}] \in H^{2\tilde{g}}(\CM_{\beta,A}, \BC)
\]
lies in $P_0H^{2\tilde{g}}(\CM_{\beta,A}, \BC)$ by Proposition \ref{prop1.1}, and $\eta_{\beta,A}$ is a Lefschetz class, we obtain that
\[
\eta_{\beta,A}^{\tilde{g}}\cap{[F_{\beta,A}]} = \eta_{\beta,A}^{\tilde{g}} \cup [F_{\beta,A}] \neq 0
\]
by the hard Lefschetz condition. Hence after specialization, we have
\[
{\eta_0^{\mathrm{dim(F_0)}}} \cap [F_0]  = \eta_{\beta,A}^{\tilde{g}}\cap{[F_{\beta,A}]} \neq 0
\]
with $F_0$ a closed fiber of $h: \CM_{\mathrm{Dol}} \to \Lambda$. We conclude from Proposition \ref{prop3.2_Hitchin} that $\eta_0$ is a Lefschetz class with respect to $h$. The inclusion  (\ref{prop4.4eq}) then follows from Lemma \ref{comparison1} {(applied to $\phi = \mathrm{sp}^!$)}.
\end{proof}

\begin{rmk}\label{remark1}
Proposition \ref{prop4.4} implies that $\mathrm{sp}^!(G_*H^*(\CM_{\beta,A}, \BC))$ splits the restricted perverse filtration $P_*H^*(\CM_{\mathrm{Dol}}, \BC) \cap \mathrm{Im(sp^!)}$, \emph{i.e.},
\[
P_kH^*(\CM_{\mathrm{Dol}}, \BC) \cap \mathrm{Im(sp^!)} = \bigoplus_{i\leq k} \mathrm{sp}^!\left( \widetilde{G}_iH^*(\CM_{\beta,A}, \BC)\right).
\]
In general, for an arbitrary splitting $G_*H^*(\CM_{\beta,A}, \BC)$ of the perverse filtration $P_*H^*(\CM_{\beta,A}, \BC)$, it may not be true that
\[
\mathrm{sp}^!\left({G}_iH^*(\CM_{\beta,A}, \BC) \right) \cap \mathrm{sp}^!\left({G}_jH^*(\CM_{\beta,A}, \BC) \right) = \{0\}, \quad \forall i\neq j.
\]
It is thus crucial, in our approach to the results of this paper, to realize the splitting 
\[
H^*(\CM_{\beta,A}, \BC) = \bigoplus_{k,j}\widetilde{G}_kH^j(\CM_{\beta,A}, \BC)
\]
as the first  Deligne splitting associated with a Lefschetz class as in Theorem \ref{strengthened2.1}.
\end{rmk}

\subsection{Normalized classes} Our purpose is to apply the specialization morphism $\mathrm{sp}^!$ to the tautological classes. We first discuss some properties of the normalized classes introduced in Section \ref{sec_0.3}.

Following \cite{HRV, HT1, Shende}, the cohomology 
\begin{equation}\label{eqn89}
H^*(\CM_{\mathrm{Dol}}, \BQ) = H^*(\CM_B, \BQ)
\end{equation}
can be understood by the associated $\mathrm{PGL}_n$-character variety $\hat{\CM}_{\mathrm{B}}$. More precisely, let
\[
H^*(\CM_B, \BQ) \cong H^*(\hat{\CM}_{\mathrm{B}}, \BQ) \otimes H^*((\BC^*)^{2g}, \BQ)
\]
be the isomorphism established in \cite[Section 1]{HT1} and \cite[Theorem 2.2.12]{HRV}. Every class $w \in H^i(\hat{\CM}_{\mathrm{B}}, \BQ)$ can be naturally viewed as a class
\[
w = w\otimes 1 \in H^i(\CM_B, \BQ).
\]
The weights of the tautological classes associated with the universal $\mathrm{PGL}_r$-bundle $\CT$ on $C\times \CM_B$ (induced by the universal $\mathrm{GL}_r$-bundle) were calculated in \cite{Shende}, 
\begin{equation}\label{eqn52}
\int_\gamma c_k(\CT) \in {^k\mathrm{Hdg}^{i+2k-2}(\CM_B)},\quad { \forall k \geq 0}, \forall~\gamma \in H^i(C, \BQ).
\end{equation}
The following lemma deduces (\ref{taut_weight}) from (\ref{eqn52}).

\begin{lem}\label{lem3.1}
A twisted universal family $(\CU^\alpha, \theta)$ on $C \times {\CM}_{\mathrm{Dol}}$ satisfies
\begin{equation}\label{53}
\int_\gamma \mathrm{ch}^\alpha_k(\CU) \in {^k\mathrm{Hdg}^{i+2k-2}(\CM_B)},\quad \forall~\gamma \in H^i(C, \BQ),~~ \forall~ k\geq 0,
\end{equation}
if and only if $\mathrm{ch}^\alpha(\CU)$ is normalized.
\end{lem}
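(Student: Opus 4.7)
My plan is to reduce the lemma to Shende's weight computation (\ref{eqn52}) for the universal $\mathrm{PGL}_r$-bundle $\CT$, via an explicit comparison between $\mathrm{ch}(\CT)$ and the normalized $\mathrm{ch}^\alpha(\CU)$.

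The key identity is as follows. Since $\gcd(r,\chi)=1$, one has the defining relation $\mathrm{ch}(\CT)=\mathrm{ch}(\CU)\cdot\exp(-c_1(\CU)/r)$. Decomposing $c_1(\CU)$ via K\"unneth,
\[
c_1(\CU)=c_1(\CU|_{C\times q})\otimes 1+1\otimes c_1(\CU|_{p\times\CM_{\mathrm{Dol}}})+\delta,\qquad \delta\in H^1(C,\BQ)\otimes H^1(\CM_{\mathrm{Dol}},\BQ),
\]
and using that the normalization conditions translate to $r\alpha_1=-c_1(\CU|_{C\times q})$ and $r\alpha_2=-c_1(\CU|_{p\times\CM_{\mathrm{Dol}}})$, one obtains
\[
\mathrm{ch}^\alpha(\CU)=\mathrm{ch}(\CT)\cdot\exp(\delta/r),
\]
where the exponential terminates at $\delta^2$ because $H^{\geq 3}(C,\BQ)=0$ forces $\delta^3=0$.

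For the forward direction, writing $\delta=\sum_s e_s\otimes f_s$ with $e_s\in H^1(C,\BQ)$ and $f_s\in H^1(\CM_{\mathrm{Dol}},\BQ)$, a direct expansion gives
\[
\int_\gamma \mathrm{ch}^\alpha_k(\CU)=\sum_{l=0}^{2}\frac{1}{l!\,r^l}\sum_{s_1,\ldots,s_l}f_{s_1}\cdots f_{s_l}\cdot\int_{\gamma\cup e_{s_1}\cup\cdots\cup e_{s_l}}\mathrm{ch}_{k-l}(\CT).
\]
This reduces the weight estimate to (i) Shende's (\ref{eqn52}) (after Newton's identities translate $c_j(\CT)$ into $\mathrm{ch}_j(\CT)$ without changing weights) and (ii) the claim that each $f_s\in{}^1\mathrm{Hdg}^1(\CM_B)$. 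Step (ii) will be the main obstacle: one interprets $f_s$ as the slant product of $c_1(\CU)$ against $e_s^\vee\in H_1(C,\BQ)$ and argues, via the explicit description of $H^1(\CM_B)$ through the universal bundle as in \cite{HRV,Shende}, that these classes have pure weight $2$ and type $(1,1)$. Multiplicativity of the weight filtration then completes the forward direction.

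For the backward direction, I would write $\alpha=\alpha_0+\beta$ with $\alpha_0$ the (unique) normalized twist and $\beta=p_C^*\beta_1+p_\CM^*\beta_2$. The case $k=1$, $\gamma=1\in H^0(C,\BQ)$ of (\ref{53}) forces $\int_1\mathrm{ch}^\alpha_1(\CU)\in{}^1\mathrm{Hdg}^0(\CM_B)=0$ (since $H^0$ is pure of type $(0,0)$), hence $\int_C\beta_1=0$, and therefore $\beta_1=0$ (as $\dim H^2(C,\BQ)=1$). An analogous analysis of $k=1$, $\gamma=[p]$, combined with the precise weight of $c_1(\CU|_{p\times\CM_{\mathrm{Dol}}})$ on the Betti side, pins down $\beta_2=0$, establishing normalization.
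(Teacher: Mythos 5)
Your argument is essentially the paper's: both rest on the identity $\mathrm{ch}^\alpha(\CU)=\mathrm{ch}(\CT)\cup\exp\left(\mathrm{ch}_1^\alpha(\CU)/r\right)$, Shende's weight computation (\ref{eqn52}) for $\CT$, and the purity of $H^0$, $H^1$, $H^2$ of $\CM_B$, so that your explicit expansion of the exponential and your two test classes in the converse direction merely unpack the paper's single equivalence ``(\ref{53}) holds iff $\mathrm{ch}_1^\alpha(\CU)\in H^1(C,\BQ)\otimes H^1(\CM_B,\BQ)$''. The step you flag as the main obstacle, namely that the K\"unneth factors $f_s$ lie in ${}^1\mathrm{Hdg}^1(\CM_B)$, is not a real obstacle: it is the standard fact $H^1(\CM_B,\BQ)={}^1\mathrm{Hdg}^1(\CM_B)$, which follows from the splitting $H^*(\CM_B,\BQ)\cong H^*(\hat{\CM}_B,\BQ)\otimes H^*((\BC^*)^{2g},\BQ)$ of \cite{HRV, HT1}, and the paper uses it implicitly at exactly the same point.
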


\begin{proof}
We define
\[
{^k\mathrm{Hdg}^{d}(C\times \CM_B)} = \bigoplus_{i+j=d} H^i(C, \BQ) \otimes {^k\mathrm{Hdg}^{j}(\CM_B)}.
\]
Then (\ref{53}) is equivalent to
\[
\mathrm{ch}^\alpha(\CU) \in \bigoplus_k {^k\mathrm{Hdg}^{2k}(C\times \CM_B)}.
\]
By a direct calculation using Chern roots, we have (recall that $r$ is the rank)
\[
\mathrm{ch}^\alpha(\CU) = \mathrm{ch}(\CT) \cup \mathrm{exp}\left(\frac{c_1(\CU)}{r}+ \alpha\right).
\]
By using the universal relations between Chern character and total Chern class, we note that (\ref{eqn52}) is equivalent to \[
\mathrm{ch}(\CT) \in \bigoplus_k {^k\mathrm{Hdg}^{2k}(C\times \CM_B)}.
\]
By \cite{Shende}, we have
\[
H^0(\CM_B, \BQ) = {^0\mathrm{Hdg}^{0}(\CM_B)}, \quad H^2(\CM_B, \BQ) = {^2\mathrm{Hdg}^{2}(\CM_B)},
\]
which implies 
\[
{^1\mathrm{Hdg}^{2}(C\times \CM_B)} = H^1(C, \BQ)\otimes H^1(\CM_B ,\BQ).
\]
Therefore, we obtain that (\ref{53}) holds if and only if
\[
\mathrm{ch}_1^\alpha(\CU)= c_1(\CU) + r\alpha \in H^1(C, \BQ) \otimes H^1(\CM_B, \BQ).
\]
This is equivalent to the condition that the class $\mathrm{ch}(\CU^\alpha)$ is normalized.
\end{proof}

Next, in parallel to Lemma \ref{lem3.1},  we give a suffcient criterion involving the perverse filtration  for a class $\mathrm{ch}^\alpha(\CU)$ to be normalized.

\begin{lem}\label{lem4.7}
Suppose we have a twisted universal family $(\CU^\alpha, \theta)$ such that, for every $k \geq 0$ and 
every $\gamma \in H^{2i}(C, \BQ)$, the tautological class 
\begin{equation*}
\int_\gamma \mathrm{ch}^\alpha_k(\CU) \in H^{2i+2k-2}(\CM_\mathrm{Dol}, \BQ),
\end{equation*}
has perversity $k$. Then $\mathrm{ch}^\alpha(\CU)$ is normalized.
\end{lem}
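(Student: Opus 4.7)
My plan is to verify the two defining identities of normalization, namely $\mathrm{ch}^\alpha_1(\CU)|_{C\times q}=0$ and $\mathrm{ch}^\alpha_1(\CU)|_{p\times\CM_{\mathrm{Dol}}}=0$, by making specific instantiations of the perversity hypothesis. Only the case $k=1$ is used; the choices $\gamma=1\in H^0(C,\BQ)$ and $\gamma=[\mathrm{pt}]\in H^2(C,\BQ)$ handle the two conditions respectively. The second condition reduces to an auxiliary vanishing $P_1H^2(\CM_{\mathrm{Dol}},\BQ)=0$ that I would isolate via the $\mathrm{PGL}_r$-reduction.

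First, apply the hypothesis with $\gamma=1,k=1$. Writing the K\"unneth decomposition
\[
\mathrm{ch}^\alpha_1(\CU)=A\cdot[\mathrm{pt}]\otimes 1+\sum_j b_j\otimes c_j+1\otimes D
\]
with $A\in\BQ$, $b_j\in H^1(C,\BQ)$, $c_j\in H^1(\CM_{\mathrm{Dol}},\BQ)$, and $D\in H^2(\CM_{\mathrm{Dol}},\BQ)$, a direct K\"unneth calculation gives $\int_1\mathrm{ch}^\alpha_1(\CU)=A\in H^0(\CM_{\mathrm{Dol}},\BQ)$ and $\mathrm{ch}^\alpha_1(\CU)|_{C\times q}=A\cdot[\mathrm{pt}]$. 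Since $H^0(\CM_{\mathrm{Dol}},\BQ)=P_0H^0$, any class of perversity $\geq 1$ in $H^0$ must vanish; the hypothesis then forces $A=0$, establishing the first normalization identity.

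Next, apply the hypothesis with $\gamma=[\mathrm{pt}],k=1$: the same decomposition yields $\int_{[\mathrm{pt}]}\mathrm{ch}^\alpha_1(\CU)=\mathrm{ch}^\alpha_1(\CU)|_{p\times\CM_{\mathrm{Dol}}}=D$, so by hypothesis $D\in P_1H^2(\CM_{\mathrm{Dol}},\BQ)$. I would conclude $D=0$ by proving $P_1H^2(\CM_{\mathrm{Dol}},\BQ)=0$ using the $\mathrm{PGL}_r$-reduction \eqref{112233} and its perverse compatibility \eqref{eqn71}. The $d=2$, $k=1$ summand of that decomposition reads
\[
P_1H^2(\CM_{\mathrm{Dol}})=\bigl(P_1H^2(\hat\CM_{\mathrm{Dol}})\otimes H^0(\mathrm{Pic}^0(C))\bigr)\oplus\bigl(P_0H^1(\hat\CM_{\mathrm{Dol}})\otimes H^1(\mathrm{Pic}^0(C))\bigr).
\]
The second summand vanishes by the simple-connectedness of the twisted $\mathrm{PGL}_r$-moduli, which gives $H^1(\hat\CM_{\mathrm{Dol}},\BQ)=0$. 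For the first, the proof of Proposition~\ref{prop3.2_Hitchin} records that $H^2(\hat\CM_{\mathrm{Dol}},\BQ)=\BQ\cdot\alpha$ is spanned by an $\hat h$-ample class $\alpha$, hence an $\hat h$-Lefschetz class; the Hard Lefschetz condition \eqref{pi-lefschetz} on the graded perverse pieces, combined with the perversity-$2$ shift \eqref{e51} of cup product by $H^2$-classes, forces $[\alpha]\neq 0$ in $\mathrm{Gr}^P_2 H^2(\hat\CM_{\mathrm{Dol}})$, so $\alpha\notin P_1H^2(\hat\CM_{\mathrm{Dol}})$ and $P_1H^2(\hat\CM_{\mathrm{Dol}})=0$.

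The main obstacle is precisely this vanishing $P_1H^2(\CM_{\mathrm{Dol}},\BQ)=0$; its ingredients---the $\mathrm{PGL}_r$-reduction with perverse compatibility, the simple-connectedness of the twisted $\mathrm{PGL}_r$-moduli, and the perversity-$2$ status of any $\hat h$-ample class---are either stated in the excerpt or follow directly from the definitions of Lefschetz class and perverse filtration. Once this is in hand, $D=0$ and both defining conditions of normalization are verified.
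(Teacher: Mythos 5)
Your proof is correct, and its skeleton coincides with the paper's: both reduce the lemma to the two statements $P_0H^0(\CM_{\mathrm{Dol}},\BQ)=H^0(\CM_{\mathrm{Dol}},\BQ)$ and $P_1H^2(\CM_{\mathrm{Dol}},\BQ)=0$, and then read off the vanishing of the $H^2(C)\otimes H^0(\CM_{\mathrm{Dol}},\BQ)$ and $H^0(C)\otimes H^2(\CM_{\mathrm{Dol}},\BQ)$ K\"unneth components of $\mathrm{ch}_1^\alpha(\CU)$ from the perversity hypothesis at $k=1$. The only real divergence is in how the key vanishing $P_1H^2(\hat{\CM}_{\mathrm{Dol}},\BQ)=0$ is justified. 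The paper's footnote invokes \cite[Theorem 1.4.8]{dCHM1} together with the fact that an ample class does not vanish on a general fiber of $\hat h$. You instead argue directly from the Lefschetz property of $\alpha$: if $\alpha$ lay in $P_1H^2(\hat{\CM}_{\mathrm{Dol}},\BQ)$, the operator it induces on $\bigoplus\mathrm{Gr}^P_\star H^\bullet$ would kill $1\in\mathrm{Gr}^P_0H^0\neq 0$, so its $R$-th power could not realize the isomorphism $\mathrm{Gr}^P_0\xrightarrow{\;\sim\;}\mathrm{Gr}^P_{2R}$ required by (\ref{pi-lefschetz}); your phrasing is terse, but this mechanism checks out and gives a pleasantly self-contained alternative to the paper's citation. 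You also explicitly dispose of the second summand $P_0H^1(\hat{\CM}_{\mathrm{Dol}},\BQ)\otimes H^1(\mathrm{Pic}^0(C),\BQ)$ of $P_1H^2(\CM_{\mathrm{Dol}},\BQ)$, which the paper leaves implicit; your route via $H^1(\hat{\CM}_{\mathrm{Dol}},\BQ)=0$ is fine but imports a fact the paper never states, and there is a cheaper argument: since the fibers of $\hat h$ are connected, $P_0H^*(\hat{\CM}_{\mathrm{Dol}},\BQ)$ is the image of $H^*(\hat{\Lambda},\BQ)$, which vanishes in positive degrees because $\hat{\Lambda}$ is an affine space, so $P_0H^1(\hat{\CM}_{\mathrm{Dol}},\BQ)=0$ without any knowledge of $H^1(\hat{\CM}_{\mathrm{Dol}},\BQ)$.
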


\begin{proof}
We have 
\[
 P_1H^2(\CM_{\mathrm{Dol}}, \BQ) = 0, \quad  P_0H^0(\CM_{\mathrm{Dol}}, \BQ)=H^0(\CM_{\mathrm{Dol}}, \BQ).
\]
The first vanishing follows from the decomposition \eqref{112233}, and the corresponding vanishing for $\hat{\CM}_{\mathrm{Dol}}$ due to the fact that its second cohomology is generated by a relative ample class which must therefore have perversity $2$.\footnote{To see that a relative ample class has perversity 2, we apply \cite[Theorem 1.4.8]{dCHM1} together with the fact that an ample class does not vanish over a general fiber of the Hitchin morphism.} The second equality is clear in view of the isomorphism $H^0 (\CM_{\mathrm{Dol}},\BQ) \cong H^0 (\Lambda, \BQ)$ via pull-back from the base of the Hitchin fibration (\ref{Hitchin_fib}).
Since by assumption, any K\"unneth factor of $\mathrm{ch}_1^\alpha(\CU)$ in $H^*(\CM_{\mathrm{Dol}}, \BQ)$ has perversity $1$, we reach the desired conclusion
\[
\mathrm{ch}_1^\alpha(\CU) \in  H^1(C, \BQ)\otimes H^1(\CM_\mathrm{Dol} ,\BQ). \qedhere
\]
\end{proof}

\begin{rmk}\label{rmk4.8}
In fact, we see from the proofs of Lemmas \ref{lem3.1} and \ref{lem4.7} that both lemmas hold if we use $\BC$-coefficients for the cohomology groups and we allow $\alpha$ to be a $\BC$-class. In particular, a $\BC$-normalized class $\mathrm{ch}^\alpha(\CU)$ is unique and rational.
\end{rmk}

\subsection{Specializations and tautological classes}
Assume $j_C: C\hookrightarrow A$ is the closed embedding of the curve in an abelian surface. Let $\CS \to T$ and $\CM \to T$ be the family of surfaces and the relative moduli space of 1-dimensional sheaves introduced in Section \ref{Section 4.2}. In the following, we construct a family of cohomology classes on $\CM$ over $T$ whose restriction to every closed fiber $\CM_t$ is a twisted tautological class.

For our purpose, we take a relative compactification $\CS \subset \CS'$ over $T$ as in the proof of Lemma \ref{lem4.1} (ii). Then $\CM$ is the {coarse} moduli space of Gieseker-stable 1-dimensional sheaves $\CF$ on $\CS'$ satisfying that $\mathrm{supp}(\CF)$ is proper and contained in $\CS_t \subset \CS_t'$ with the numerical conditions
\[
\chi(\CF) = \chi, \quad [\mathrm{supp}(\CF)] = \beta_t.
\]
Here recall that $\beta_t$ is the class (\ref{beta_t}). By the proof of Lemma \ref{lem4.1} (ii), there is a universal $K$-theory class $[\CF_T]$ on $\CS' \times_T \CM$.

Let
\[
\pi_{\CS'}: \CS' \times_T \CM \to \CS', \quad \pi_\CM: \CS' \times_T \CM \to \CM
\]
be the projections. We have the Chern character 
\[
\mathrm{ch}({\CF_T}) \in H^*(\CS' \times_T \CM, \BC)
\]
defined by the universal class $[\CF_T]$. For any class of the type
\begin{equation}\label{alphatilde}
\tilde{\alpha} = \pi_\CS^* \tilde{\alpha}_1 + \pi_\CM^* \tilde{\alpha}_2 \in H^2(\CS' \times_T \CM, \BC)
\end{equation}
with $\tilde{\alpha}_1\in H^2(\CS', \BC),~ \tilde{\alpha}_2\in H^2(\CM, \BC)$, we consider the relative twisted class
\begin{equation}\label{456}
\mathrm{ch}^{\tilde{\alpha}}(\CF_T) = \mathrm{exp}(\tilde{\alpha})\cup \mathrm{ch}(\CF_T),
\end{equation}
whose degree $2k$ parts
\[
 \mathrm{ch}_k^{\tilde{\alpha}}(\CF_T) \in H^{2k}(\CS' \times_T \CM, \BC)
\]
induce a relative tautological class
\begin{equation}\label{427}
\int_{\tilde{\gamma}} \mathrm{ch}_k^{\tilde{\alpha}}(\CF_T): = {\pi_\CM}_* \left( (\pi_{\CS'}^*\tilde{\gamma} \cup  \mathrm{ch}_k^{\tilde{\alpha}}(\CF_T))\cap [\CS' \times_T \CM] \right) \in H_*^\mathrm{BM}(\CM,\BC) =H^*(\CM, \BC)
\end{equation}
for any $\tilde{\gamma}\in H^*(\CS', \BC)$. Here we use the Poincar\'e duality in the last identity, and we require the properness of $\pi_\CM$ for the pushforward functor ${\pi_\CM}_* : H^{\mathrm{BM}}_*(\CS' \times_T \CM, \BC) \to H^{\mathrm{BM}}_*(\CM, \BC)$.

Now we check that (\ref{427}) is the desired cohomology class on $\CM$ which gives a family of twisted tautological classes over the base $T$.

For any closed point $t \in T$, base change \cite[Proposition 1.7]{Fulton} implies that the restriction of (\ref{427}) to a closed fiber $\CM_t$ is a twisted universal class of the form 
\begin{equation}\label{restriction1}
\int_{\gamma_t} \mathrm{res}_t\left( \mathrm{ch}_k^{\tilde{\alpha}} \left(\CF_T\right)\right) \in H^*(\CM_t, \BC),\quad \gamma_t \in H^*(\CS'_t, \BC).
\end{equation}
When $t \neq 0$, we have $\CS_t= \CS'_t$, and therefore (\ref{restriction1}) recovers a twisted universal class on $\CM_{\beta,A}$,
\[
\int_{\gamma_t} \mathrm{ch}_k^\alpha (\CF_\beta) \in H^*(\CM_{\beta,A}, \BC).
\]
We now calculate the class (\ref{restriction1}) for $t =0$. By \cite[Theorem 4.6.5]{HL}, there exists a coherent sheaf $\CF_0$ supported on $T^*C \times \CM_{\mathrm{Dol}}$ which represents a universal class $[\CF_0]$ on $\CS'\times \CM_{\mathrm{Dol}}$. Moreover, the coherent sheaf $\CF_0$ is supported on the universal spectral curve $\CC \subset T^*C \times \CM_{\mathrm{Dol}}$ which is proper over $\CM_{\mathrm{Dol}}$. Hence by \cite[Property 2.1]{BFM}, we have
\begin{equation}\label{support123}
\mathrm{ch}_k(\CF_0) \cap [\CS_0'\times \CM_{\mathrm{Dol}}] = \sum_i c_i [Z_i] \in \mathrm{CH}_*(\CS'_0 \times \CM_{\mathrm{Dol}})_\BQ
\end{equation}
where $c_i \in \BQ$ and $Z_i$ are closed subvarieties in $T^*C \times \CM_{\mathrm{Dol}}$ which are proper over $\CM_{\mathrm{Dol}}$ via the composition
 \begin{equation}\label{composition1}
 Z_i \hookrightarrow T^*C \times \CM_{\mathrm{Dol}} \xrightarrow{p_\CM} \CM_{\mathrm{Dol}}. 
 \end{equation}

In particular, the expression (\ref{support123}) together with the properties for $Z_i$ implies that the class 
\[
\int_{\gamma_0} \mathrm{res}_0\left( \mathrm{ch}_k^{\tilde{\alpha}} \left(\CF_T\right)\right) \in H^*(\CM_{\mathrm{Dol}}, \BC)
\]
only depends on the restrictions of the classes {$\gamma_0$  to $H^2(T^*C, \BC)$ and
 $\mathrm{res}_0(\tilde{\alpha})$ to  $H^2(T^*C, \BC)\oplus H^2(\CM_{\mathrm{Dol}}, \BC)$}. 
 
 Note that a class of the type
\begin{equation}\label{820}
{p_\CM}_*(\omega \cup \mathrm{ch}_k(\CF_0)): = \sum_i c_i\cdot {p_{\CM}}_*(\omega \cap [Z_i]) \in H^{\mathrm{BM}}_*(\CM_{\mathrm{Dol}}, \BQ) = H^*(\CM_{\mathrm{Dol}}, \BQ)
\end{equation}
is well-defined for $\omega \in H^*(T^*C \times \CM_{\mathrm{Dol}}, \BC)$ due to the properness of (\ref{composition1}). We see from the discussion above that the restriction of the class (\ref{427}) to $\CM_0 (= \CM_{\mathrm{Dol}})$ is given by the following:
\begin{equation}\label{1114}
\mathrm{res}_0\left( \int_{\tilde{\gamma}} \mathrm{ch}_k^{\tilde{\alpha}} (\CF_T) \right) = \int_{\mathrm{sp}^!(\gamma_t)} \mathrm{ch}_k^{\alpha_0}(\CF_0) \in H^*(\CM_{\mathrm{Dol}}, \BC).
\end{equation}
Here $\mathrm{sp}^!(\gamma_t) \in H^*(\CS_0, \BC) = H^*(T^*C, \BC)$, $\alpha_0 \in H^2(T^*C, \BC)\oplus H^2(\CM_{\mathrm{Dol}}, \BC)$, and the class on the r.h.s. of (\ref{1114}) is defined by (\ref{820}).

The following proposition shows that the tautological classes (\ref{taut_class}) on the Dolbeault side $\CM_{\mathrm{Dol}}$  are obtained by specializing certain other tautological classes on the compact geometry side $\CM_{\beta,A}$.

\begin{prop}\label{prop4.8}
Let 
\[
\int_{\gamma} \mathrm{ch}_k^\alpha (\CF_\beta) \in H^*(\CM_{\beta,A}, \BC)
\]
be the classes of Theorem \ref{taut_abelian} with $\gamma \in H^*(A, \BQ)$ a rational class on $A$, then we have 
\[
\mathrm{sp}^!\left( \int_{\gamma} \mathrm{ch}_k^\alpha (\CF_\beta) \right) = c(k-1, j_C^*\gamma) \in H^*(\CM_{\mathrm{Dol}}, \BQ).
\]
\end{prop}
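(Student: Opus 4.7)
The plan is to combine the specialization identity (\ref{1114}) with the Beauville--Narasimhan--Ramanan (BNR) correspondence and Grothendieck--Riemann--Roch (GRR), then invoke uniqueness of the normalized universal class.

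First, I choose a lift $\tilde\alpha = p_{\CS'}^*\tilde\alpha_1 + p_\CM^*\tilde\alpha_2 \in H^2(\CS'\times_T\CM,\BC)$ of $\alpha$ and apply (\ref{1114}) together with Example \ref{example1} (which gives $\mathrm{sp}^!(\gamma)=j_C^*\gamma$ under the identification $H^*(T^*C)=H^*(C)$) to get
\[
\mathrm{sp}^!\Bigl(\int_\gamma \mathrm{ch}_k^\alpha(\CF_\beta)\Bigr) = \int_{j_C^*\gamma}\mathrm{ch}_k^{\alpha_0}(\CF_0),
\]
where $\alpha_0\in H^2(T^*C,\BC)\oplus H^2(\CM_{\mathrm{Dol}},\BC)$ is the restriction of $\tilde\alpha$ to $T^*C\times\CM_{\mathrm{Dol}}$.

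Next, I translate to the Higgs side via BNR: the proper pushforward $\CU:=(\pi\times\mathrm{id})_*\CF_0$ along $\pi:T^*C\to C$ is a universal Higgs bundle on $C\times\CM_{\mathrm{Dol}}$. Writing $\alpha_0=(\pi\times\mathrm{id})^*\bar\alpha_0$ for the unique class $\bar\alpha_0=p_C^*\bar a_0+p_\CM^*b_0\in H^2(C\times\CM_{\mathrm{Dol}},\BC)$, and observing that on the curve $C$
\[
\mathrm{Td}(T^*C)^{-1}=\exp\bigl(-(g-1)[\mathrm{pt}_C]\bigr),
\]
I apply GRR to the finite pushforward over the support of $\CF_0$ together with the projection formula. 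Since the relative tangent bundle of $\pi\times\mathrm{id}$ is $\bar\pi^*p_C^*T^*C$, the Todd factor is absorbed multiplicatively into $\exp(\bar\alpha_0)$, yielding
\[
(\pi\times\mathrm{id})_*\mathrm{ch}_k^{\alpha_0}(\CF_0) = \mathrm{ch}_{k-1}^{\alpha'}(\CU), \qquad \alpha':=\bar\alpha_0-(g-1)p_C^*[\mathrm{pt}_C],
\]
the shift $k\mapsto k-1$ reflecting the relative dimension $1$ of $\pi\times\mathrm{id}$. A further application of the projection formula gives
\[
\int_{j_C^*\gamma}\mathrm{ch}_k^{\alpha_0}(\CF_0)=\int_{j_C^*\gamma}\mathrm{ch}_{k-1}^{\alpha'}(\CU).
\]

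Finally, I argue that the lift $\tilde\alpha$ may be chosen so that $\mathrm{ch}^{\alpha'}(\CU)$ is the normalized twisted universal class. The kernel of the restriction $H^2(\CS'\times_T\CM)\to H^2(A\times\CM_{\beta,A})$ comprises cohomology classes supported on the central fiber and provides the required flexibility: the exceptional divisor $[E]$ of $\CS'=\mathrm{Bl}_{C\times 0}(A\times T)$ adjusts the $H^2(T^*C)=H^2(C)$ part of $\alpha_0$, while classes from the relative Borel--Moore homology of $\CM_0$ adjust the $H^2(\CM_{\mathrm{Dol}})$ part. Since $\mathrm{sp}^!$ is lift-independent, the integral on the right-hand side is the same for all such choices, and by the uniqueness of the normalized universal class (Remark \ref{rmk4.8}), we conclude
\[
\int_{j_C^*\gamma}\mathrm{ch}_{k-1}^{\alpha'}(\CU)=c(k-1,j_C^*\gamma) \in H^*(\CM_{\mathrm{Dol}},\BQ).
\]
The main obstacle is the verification in this last step that the central-fiber-supported cohomology classes provide sufficient freedom to realize both normalization conditions on the two relevant K\"unneth components of $\mathrm{ch}_1^{\alpha'}(\CU)$; this amounts to a careful analysis of the cohomology of the degeneration family $\CS'\times_T\CM$ combined with Deligne's global invariant cycle theorem.
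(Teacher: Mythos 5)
Your first two steps track the paper's proof: the reduction via (\ref{1114}) and Example \ref{example1} to a class of the form $\int_{j_C^*\gamma}\mathrm{ch}_k^{\alpha_0}(\CF_0)$, and the Grothendieck--Riemann--Roch pushforward along $T^*C\times \CM_{\mathrm{Dol}}\to C\times\CM_{\mathrm{Dol}}$ producing $\int_{j_C^*\gamma}\mathrm{ch}_{k-1}^{\alpha'}(\CU)$ for a twisted universal family (the Todd correction being harmlessly absorbed into the twist, exactly as you say). The gap is in your last step, and it is not merely a missing verification: the mechanism you propose cannot work. The ``flexibility'' you attribute to central-fiber-supported classes is vacuous, because any class of the form ${i_0}_*\gamma$ restricts trivially to the central fiber: $i_0^*{i_0}_*\gamma = c_1(N_{W_0/W})\cap\gamma = 0$ since the normal bundle of the fiber over $0\in T$ is trivial. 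This is precisely the excess-intersection computation in Section \ref{section4.3} that makes $\mathrm{sp}^!$ independent of the lift --- the same lift-independence you invoke one sentence earlier. Concretely, the exceptional divisor class $[E]$ restricts to $0$ in $H^2(T^*C,\BQ)$, so it adjusts nothing. Hence $\alpha_0=\mathrm{sp}^!(\alpha)$ is completely determined by the class $\alpha$ supplied by Theorem \ref{taut_abelian}, over which you have no direct control, and you have no argument that the resulting $\mathrm{ch}^{\alpha'}(\CU)$ is normalized.

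The paper closes this gap by an indirect argument that you are missing. By Theorem \ref{taut_abelian}(a) and Proposition \ref{prop4.4} (compatibility of $\mathrm{sp}^!$ with the first Deligne splittings on both sides), the specialized classes $\int_{j_C^*\gamma}\mathrm{ch}_{k-1}^{\alpha_0}(\CU)$ automatically have perversity $k-1$ for all $\gamma\in H^*(A,\BQ)$. Lemma \ref{lem4.7} then states that a twisted universal class all of whose even tautological classes have the predicted perversity must be normalized, and Remark \ref{rmk4.8} upgrades this to $\BC$-coefficients and gives uniqueness and rationality of the normalized class. These two ingredients, not a choice of lift, are what identify the specialization with $c(k-1,j_C^*\gamma)$. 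If you want to salvage your write-up, replace the final paragraph with an appeal to Proposition \ref{prop4.4} followed by Lemma \ref{lem4.7} and Remark \ref{rmk4.8}.
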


\begin{proof}
By the definition of $\mathrm{sp}^!$ and (\ref{1114}), we have
\[
\mathrm{sp}^!\left( \mathrm{res}_t\left( \int_{\tilde{\gamma}} \mathrm{ch}_k^{\tilde{\alpha}} (\CF_T) \right)\right) = \mathrm{res}_0\left( \int_{\tilde{\gamma}} \mathrm{ch}_k^{\tilde{\alpha}} (\CF_T) \right)= \int_{\mathrm{sp}^!(\gamma_t)} \mathrm{ch}_k^{\alpha_0}(\CF_0)
\]
for any $\tilde{\alpha}$ of the type (\ref{alphatilde}). 


A direct calculation by applying the Grothendieck--Riemann--Roch formula to the natural projection 
\[
\mathrm{pr}: T^*C\times \CM_{\mathrm{Dol}} \to C\times \CM_{\mathrm{Dol}} 
\]
(see the paragraph before \cite[Remark 8]{Markman3}) together with Example \ref{example1} yields
\begin{equation*}\label{997}
\int_{\mathrm{sp}^!(\gamma)} \mathrm{ch}_k^{\alpha_0} (\CF_0) = \int_{j_C^*\gamma} \mathrm{ch}_{k-1}^{\alpha_0}(\CU).
\end{equation*}
Here $\CU = \mathrm{pr}_* \CF_0$ is a universal family on $C\times \CM_{\mathrm{Dol}}$.

In conclusion, we obtain that
\[
\mathrm{sp}^!\left( \int_{\gamma} \mathrm{ch}_k^\alpha (\CF_\beta) \right) = \int_{j_C^*\gamma} \mathrm{ch}_{k-1}^{\alpha_0}(\CU).
\]
Moreover, we know from Proposition \ref{prop4.4} that the twisted class \[
\int_{j_C^*\gamma} \mathrm{ch}_{k-1}^{\alpha_0}(\CU),\quad \forall \gamma \in H^*(A, \BQ),~ \forall k\geq 1
\]
has perversity $k-1$. Hence Lemma \ref{lem4.7} and Remark \ref{rmk4.8} imply, by recalling (\ref{taut_class}), that 
\[
\int_{j_C^*\gamma} \mathrm{ch}_{k-1}^{\alpha_0}(\CU) = c(k-1, j_C^*\gamma). \qedhere
\]
\end{proof}

\subsection{Proofs of Theorems \ref{genus_2}, \ref{P=W_even}, and \ref{P=W_odd}}\label{Section4.6}

Since the perverse filtration with respect to the Hitchin fibration $h: \CM_{\mathrm{Dol}} \to \Lambda$ is locally constant when we deform the curve $C$ \cite{dCM}, it suffices to prove all the three theorems for a curve which can be embedded in an abelian surface
\[
j_C: C\hookrightarrow A.
\]
After having done so,  we can apply the specialization morphism 
\[
\mathrm{sp}^!: H^*(\CM_{\beta, A}, \BC) \to H^*(\CM_{\mathrm{Dol}}, \BC)
\]
introduced in Section \ref{section4.3}. The following results are immediate consequences of  Theorem \ref{strengthened2.1}, Proposition \ref{prop4.4}, and Proposition \ref{prop4.8}:
\begin{enumerate}
    \item[(i)] We have
    \begin{equation}\label{333333}
    c(\gamma, k) \in \widetilde{G}_kH^*(\CM_{\mathrm{Dol}}, \BQ), \quad \forall \gamma \in \mathrm{Im}({j_C^*}) \subset H^*(C, \BQ).
    \end{equation}
    \item[(ii)] The restriction of the decomposition \[
    H^*(\CM_{\mathrm{Dol}}, \BQ)= \bigoplus_{k,d}\widetilde{G}_kH^d(\CM_{\mathrm{Dol}}, \BQ)
    \]to the subalgebra of $H^*(\CM_{\mathrm{Dol}}, \BQ)$ generated by the classes (\ref{333333}) is multiplicative.
\end{enumerate}

We first prove Theorem \ref{genus_2}. The Abel-Jacobi morphism embeds a genus 2 curve $C$ into its Jacobian
\[
j_C: C \hookrightarrow \mathrm{Jac}(C) = A.
\]
Hence the restriction morphism $j_C^*$ is surjective, and Theorem \ref{genus_2} follows from (i) and (ii) above. 

The proof of Theorem \ref{P=W_even} is similar. For any embedding $j_C$, the image of $j_C^*$ always contains the sub-vector space
\[
H^0(C, \BQ) \oplus H^2(C, \BQ) \subset H^*(C, \BQ).
\]
Hence the subalgebra
\[
R^*(\CM_{\mathrm{Dol}}) \subset H^*(\CM_{\mathrm{Dol}}, \BQ)
\]
is contained in the subalgebra generated by the classes (\ref{333333}), and we again conclude Theorem \ref{P=W_even} by (i) and (ii).

Finally we treat the odd classes 
\[
c(\gamma, k) \in H^{2k-1}(\CM_{\mathrm{Dol}}, \BQ), \quad \gamma\in H^1(C, \BQ)
\]
and prove Theorem \ref{P=W_odd}. 

When the curve $C$ has genus $\geq 3$, the restriction
\begin{equation}\label{resH1}
j_C^*: H^1(A, \BQ) \to H^1(C, \BQ)
\end{equation}
is not surjective. We know from (i) that $c(\gamma, k)$ has perversity $k$ for any $\gamma$ lying in the image of (\ref{resH1}). Since the monodromy group of the moduli space $\CM_g$ of nonsingular genus g curves is the full symplectic group $\mathrm{Sp}_{2g}$ by \cite{A}, the sub-vector space 
\[
\mathrm{Im}\left({j_C^*}:H^1(A, \BQ)\to H^1(C,\BQ) \right) \subset H^1(C, \BQ)
\]
generates the total cohomology $H^1(C, \BQ)$ via the action of the monodromy group. We deduce Theorem \ref{P=W_odd} from \cite{dCM}. \qed


\begin{thebibliography}{10}

\bibitem{At} M. F. Atiyah, {\em K-theory,} Lecture notes by D. W. Anderson. W. A. Benjamin, Inc., New York-Amsterdam 1967.


\bibitem{A} N. A'Campo, {\em Tresses, monodromie et le groupe symplectique,} Comment. Math. Helv. 54 (1979), no. 2, 318--327.


\bibitem{BFM} P. Baum, W. Fulton and R. MacPherson, {\em Riemann-Roch for singular varieties,} Inst. Hautes Etudes Sci. Publ. Math. 45 (1975), 101--145.


\bibitem{B} A. Beauville, {\em Vari\'et\'es k\"ahl\'eriennes dont la premi\`ere classe de Chern est nulle,} J. Differential Geom. 18 (1983), no. 4, 755--782 (1984).

\bibitem{Be} A. Beauville, {\em Syst\`emes hamiltoniens compl\`etement int\'egrables associ\'es aux surfaces~$K3$,} Problems in the theory of surfaces and their classification (Cortona, 1988), 25--31, Sympos. Math., XXXII, Academic Press, London, 1991.

\bibitem{BNR} A. Beauville, M.S. Narasimhan, and S. Ramanan, {\em Spectral curves and the generalized theta divisor,}  J. Reine Angew. Math. (1989) 398, 169--179.


\bibitem{BBD} A. A. Be\u{\i}linson, J. Bernstein, and P. Deligne, {\em Faisceaux pervers,} Analysis and topology on singular spaces, I (Luminy, 1981), 5--171, Ast\'erisque, 100, Soc. Math. France, Paris, 1982.

\bibitem{Borel} A. Borel, {\em Density properties for certain subgroups of semi-simple groups without compact components,} Ann. of Math. (2) 72 (1960), 179--188.






\bibitem{CDP} W. Y. Chuang, D. E. Diaconescu, and G. Pan, {\em BPS states and the P=W conjecture,} Moduli spaces, 132--150, London Math. Soc. Lecture Note Ser., 411, Cambridge Univ. Press, Cambridge, 2014.

\bibitem{Park}  M. A. de Cataldo, {\em Perverse sheaves and the topology of algebraic varieties,} Geometry of moduli spaces and representation theory, 1--58, IAS/Park City Math. Ser., 24, Amer. Math. Soc., Providence, RI, 2017. 



\bibitem{dC} M. A. de Cataldo, {\em Hodge-theoretic splitting mechanisms for projective maps,} with an appendix containing a letter from P. Deligne, J. Singul. 7 (2013), 134--156.

\bibitem{sp} M. A. de Cataldo, {\em Perverse Leray filtration and specialization with applications to the Hitchin morphism,} preprint.


\bibitem{dCHM1} M. A. de Cataldo, T. Hausel, and L. Migliorini, {\em Topology of Hitchin systems and Hodge theory of character varieties: the case $A_1$,} Ann. of Math. (2) 175 (2012), no.~3, 1329--1407.

\bibitem{dCHM3} M. A. de Cataldo, T. Hausel, and L. Migliorini, {\em Exchange between perverse and weight filtration for the Hilbert schemes of points of two surfaces,} J. Singul. 7 (2013), 23--38.

\bibitem{dCM} M. A. de Cataldo and D. Maulik, {\em The perverse filtration for the Hitchin fibration is locally constant,} arXiv:1808.02235.


\bibitem{dCM3} M. A. de Cataldo and L. Migliorini, {\em The Chow groups and the motives of the Hilbert scheme of points on a surface,} Journal of Algebra 251, 824--848 (2002).


\bibitem{dCM0} M. A. de Cataldo and L. Migliorini, {\em The Hodge theory of algebraic maps,} Ann. Sci. \'Ecole Norm. Sup. (4) 38 (2005), no. 5, 693--750.

\bibitem{dCM123} M. A. de Cataldo and L. Migliorini, {\em Intersection forms, topology of algebraic maps and motivic decompositions for resolutions of threefolds,} Algebraic cycles and motives, Vol. 1, 102--137, London Math. Soc. Lecture Note Ser., 343, Cambridge Univ. Press, Cambridge, 2007.

\bibitem{dCM1} M. A. de Cataldo and L. Migliorini, {\em The decomposition theorem, perverse sheaves and the topology of algebraic maps,} Bull. Amer. Math. Soc. (N.S.) 46 (2009), no. 4, 535--633.

\bibitem{dCM4} M. A. de Cataldo, L. Migliorini, {\em Hodge-theoretic aspects of the decomposition theorem,}
Algebraic Geometry, Seattle 2005, Proceedings of Symposia in Pure Mathematics, Vol.
80.2, 2009.



\bibitem{D} P. Deligne, {\em D\'ecompositions dans la cat\'egorie d\'eriv\'ee,} Motives (Seattle, WA, 1991), 115--128, Proc. Sympos. Pure Math., 55, Part 1, Amer. Math. Soc., Providence, RI, 1994.

\bibitem{DLL} R. Donagi, L. Ein, R. Lazarsfeld, {\em
Nilpotent cones and sheaves on K3 surfaces,} Birational algebraic geometry (Baltimore, MD, 1996), 51--61, Contemp. Math., 207, Amer. Math. Soc., Providence, RI, 1997. 


\bibitem{Fulton} W. Fulton, {\em Intersection theory,} Ergebnisse der Mathematik und ihrer Grenzgebiete (3), 2. Springer-Verlag, Berlin, 1984.







\bibitem{Go1} L. G\"ottsche, {\em The Betti numbers of the Hilbert scheme of points on a smooth projective surface,} Math. Ann. 286 (1990), 193--207.

\bibitem{Go2} L. G\"ottsche, W. S\"orgel, {\em Perverse sheaves and the cohomology of Hilbert schemes of smooth algebraic surfaces,} Math. Ann. 296 (1993), 235--245.


\bibitem{GLO} V. Golyshev, V. Luntz, and D. Orlov, {\em Mirror symmetry for abelian varieties,} J. Alg. Geom.10 (2001) 433--496.

\bibitem{HLSY} A. Harder, Z. Li, J. Shen, and Q. Yin, {\em P=W for Lagrangian fibrations and degenerations of hyper-K\"ahler manifolds,} Forum Math. Sigma., to appear.

\bibitem{HLR} T. Hausel, E. Letellier, and F. Rodriguez-Villegas, {\em Arithmetic harmonic analysis on character and quiver varieties,} Duke Math. J. 160 (2011), no. 2, 323--400.

\bibitem{HRV} T. Hausel and F. Rodriguez-Villegas, {\em Mixed Hodge polynomials of character varieties,} with an appendix by Nicholas M. Katz, Invent. Math. 174 (2008), no. 3, 555--624. 



\bibitem{HT1} T. Hausel, M. Thaddeus, {\em Generators for the cohomology ring of the moduli space of rank 2 Higgs bundles,} Proc. London Math. Soc. (3) 88 (2004), no. 3, 632--658. 


\bibitem{HL} D. Huybrechts and M. Lehn, {\em The geometry of moduli spaces of sheaves,} Aspects of Mathematics, E31. Friedr. Vieweg and Sohn, Braunschweig, 1997. xiv+269 pp.



\bibitem{Hit1} N.J. Hitchin, {\em Stable bundles and integrable systems,} Duke Math. J. 54 (1987) 91--114.













\bibitem{LL} E. Looijenga and V. A. Lunts, {\em A Lie algebra attached to a projective variety,} Invent. Math. 129 (1997), no. 2, 361--412. 

\bibitem{LP1} J. Le Potier, {\em Faisceaux semi-stables de dimension 1 sur le plan projectif,} Rev. Roumaine Math. Pures Appl., 38(7--8):635--678, 1993.

\bibitem{LP2} J. Le Potier, Syst\`emes coh\'erents et structures de niveau. Ast\'erisque No. 214 (1993).

\bibitem{Lieb} M. Lieblich, Moduli of sheaves: a modern primer. Algebraic geometry: Salt Lake City 2015, 361-388, Proc. Sympos. Pure Math., 97.2, Amer. Math. Soc., Providence, RI, 2018. 


\bibitem{Markman} E. Markman, {\em Generators of the cohomology ring of moduli spaces of sheaves on symplectic surfaces,} J. Reine Angew. Math. 544 (2002), 61--82. 


\bibitem{Markman5} E. Markman, {\em Integral generators for the cohomology ring of moduli spaces of sheaves over Poisson surfaces,}
Adv. Math., 208 (2007), 622-646.

\bibitem{Markman2} E. Markman, {\em On the monodromy of moduli spaces of sheaves on K3 surfaces,} J. Algebraic Geom.17(2008), 29--99.


\bibitem{Markman4} E. Markman, {\em Lagrangian fibrations of holomorphic-symplectic varieties of K3[n]-type,} In Anne Fr\"uhbis-Kr\"uger et. al., Algebraic and Complex Geometry, volume 71. Springer Proceedings in Math., 2014.


\bibitem{Markman3} E. Markman, {\em The monodromy of generalized Kummer varieties and algebraic cycles on their intermediate Jacobians,} arXiv:1805.11574.







\bibitem{MT} D. Maulik and Y. Toda, {\em Gopakumar--Vafa invariants via vanishing cycles,} Invent. Math. 213 (2018), no. 3, 1017--1097. 

\bibitem{Mellit} A. Mellit, {\em Cell decompositions of character varieties,} arXiv:1905.10685.








\bibitem{GIT} D. Mumford, Geometric Invariant Theory, Springer-Verlag, 1965.

\bibitem{SY} J. Shen and Q. Yin, {\em Topology of Lagrangian fibrations and Hodge theory and Hodge theory of hyper-K\"ahler manifolds,} appendix by Claire Voisin, Duke Math. J., to appear.

\bibitem{SZ} J. Shen and Z. Zhang, {\em Perverse filtrations, Hilbert schemes, and the P=W conjecture for parabolic Higgs bundles,} Algebr. Geom., to appear.

\bibitem{Shende} V. Shende, {\em The weights of the tautological classes of character varieties,} Int. Math. Res. Not. 2017, no. 22, 6832--6840.




\bibitem{Simp} C. T. Simpson, {\em Higgs bundles and local systems,} Inst. Hautes \'Etudes Sci. Publ. Math. No. 75 (1992), 5--95.

\bibitem{Si1994II} C. T. Simpson, {\em Moduli of representations of the fundamental group of smooth projective varieties II,"} Inst. Hautes \'Etudes Sci. Publ. Math.   No. 80 (1994), 5-79.





\bibitem{WSB} G. Williamson, {\em The Hodge theory of the decomposition theorem,} S\'eminaire Bourbaki Vol. 2015/2016, Expos\'es 1104--1119, Ast\'erisque No. 390 (2017), Exp. No. 1115, 335--367.

\bibitem{Yo} K. Yoshioka, {\em Moduli spaces of stable sheaves on abelian surfaces,} Math. Ann. 321 (2001),no. 4, 817--884.

\bibitem{Z} Z. Zhang, {\em Multiplicativity of perverse filtration for Hilbert schemes of fibered surfaces,} Adv. Math. 312 (2017), 636--679.

\end{thebibliography}
\end{document}